\numberwithin{equation}{section}
\begin{document}


\title{Effective Mixing and Counting in Bruhat-Tits Trees}
\author{Sanghoon Kwon}
\date{\today}

\maketitle
\begin{abstract}
Let $\mathcal{T}$ be a locally finite tree, $\Gamma$ be a discrete subgroup of $\textrm{Aut}(\mathcal{T})$ and $\widetilde{F}$ be a $\Gamma$-invariant potential. Suppose that the length spectrum of $\Gamma$ is not arithmetic. In this case, we prove the exponential mixing property of the geodesic translation map $\phi\colon \Gamma\backslash S\mathcal{T}\to \Gamma\backslash S\mathcal{T}$ with respect to the measure $m_{\Gamma,F}^{\nu^-,\nu^+}$ under the assumption that $\Gamma$ is \emph{full} and $(\Gamma,\widetilde{F})$ has \emph{weighted spectral gap} property. We also obtain the effective formula for the number of $\Gamma$-orbits with weights in a Bruhat-Tits tree $\mathcal{T}$ of an algebraic group. 
\end{abstract}

\setcounter{tocdepth}{2}

\newtheorem{thm}{Theorem}[section]
\newtheorem{lem}[thm]{Lemma}
\newtheorem{prop}[thm]{Proposition}
\newtheorem{coro}[thm]{Corollary}

\def\BigRoman{\uppercase\expandafter{\romannumeral\number\count 255 }}
\def\Romannumeral{\afterassignment\BigRoman\count255=}

\theoremstyle{definition}
\newtheorem{conj}{Conjecture}[section]
\newtheorem{defn}{Definition}[section]
\newtheorem{exam}{Example}[section]

\theoremstyle{remark}
\newtheorem{remark}{Remark}[section]

\renewenvironment{proof}{\noindent\emph{Proof. }}{\hfill$\square$\bigskip}
\newcommand\numberthis{\addtocounter{equation}{1}\tag{\theequation}}
\renewcommand{\abstractnamefont}{\normalfont\large\bfseries}
\renewcommand{\theenumi}{\Alph{enumi}}

\bibliographystyle{alpha} 

\titleformat{\section}[block]{\large\scshape\filcenter}{\thesection.}{1em}{}

\titleformat{\subsection}[hang]{\bfseries}{\thesubsection.}{1em}{}

\section{Introduction}
Let $X$ be a geodesic metric space, $G<\textrm{Aut}(X)$ a closed subgroup of the isometry group $X$ and let $\{B_R\,\colon\,R>1\}$ be a family of compact subsets of $X$ whose volume goes to infinity as $R\to\infty$. We are interested in the following fundamental problem: given a discrete subgroup $\Gamma$ of $G$, what are the asymptotics and error rates of $|x\Gamma\cap B_R|$ for a point $x\in X$? 

If $\Gamma$ is a lattice subgroup of $G$, or equivalently if $\Gamma\backslash G$ admits a finite $G$-invariant measure, then this problem is well understood for various $G$. In this case, $|x\Gamma\cap B_R|$ is asymptotically proportional to the volume of $B_R$ with the suitable error term assuming that the boundaries of $B_R$ are sufficiently regular (\cite{EM},\cite{DRS},\cite{GN}). However, if $\Gamma$ is not a lattice subgroup, then this type of counting problem is little understood in general. 

When $G$ is the identity component of $SO(n,1)$, $(n\ge 2)$ and $H<G$ is either a horospherical subgroup or a symmetric subgroup, Oh and Shah computed an asymptotic formula of $|x\Gamma\cap B_R|$ for a family of compact subsets $\{B_R\subset G/H\}$ provided that the Bowen-Margulis measure on $T^1(\Gamma\backslash\mathbb{H}^n)$ and the $\Gamma$-skinning size of $x$ are finite (\cite{OS}). These conditions are satisfied when $\Gamma<G$ is a \emph{geometrically finite} subgroup, that is when the unit neighborhood of the convex core $\mathcal{C}_\Gamma$ has finite volume. In addition, Mohammadi and Oh computed an asymptotic formula with an error term via the effective asymptotic expansion of matrix coefficients of the complementary series (\cite{MO}). 

Our main focus in this paper is the case when $G$ is the $\mathbb{F}$-points of a semi-simple $\mathbb{F}$-rank one algebraic group over a \emph{non-Archimedean} local field $\mathbb{F}$. In this case, we can view $G$ as a closed subgroup of the automorphism group of a bi-regular tree $\cal{T}$ and hence $G$ acts on the space of geodesics $S\mathcal{T}$ in $\mathcal{T}$. We are interested in the formula which counts the number of points of a $\Gamma$-orbit in $\cal{T}$ when $\Gamma$ is a discrete subgroup of $G$ for which certain measure $m_{\Gamma,F}^{\nu^-,\nu^+}$ on the space of geodesics $\Gamma\backslash S\mathcal{T}$ is finite.

Let $\phi\colon \Gamma\backslash S\mathcal{T}\to\Gamma\backslash S\mathcal{T}$ be the geodesic translation map and $m_{\Gamma,F}^{\nu^-,\nu^+}$ be the measure on $\Gamma\backslash S\mathcal{T}$ associated to the potentials $\widetilde{F}^\pm\colon E\mathcal{T}\to\mathbb{R}$ and the normalized Patterson densities $\nu^\pm$. We will consider a Markov chain $Z_n$ with the countable state space $\mathcal{S}$, the transition probabilities $p_{ij}$ and the stationary distribution $\pi_j$ induced from the dynamical system $(\Gamma\backslash S\mathcal{T},\phi,m_{\Gamma,F}^{\nu^-,\nu^+})$ (see Section \ref{three}). To count the number of points of discrete orbits with weights in $\mathcal{T}$ effectively, the main ingredient we show is the exponential mixing property of geodesic 2-translation map $\phi^{\circ 2}$ with respect to the measure $m_{\Gamma,F}^{\nu^-,\nu^+}$. 

When the \emph{length spectrum} $L_\Gamma$ of the group $\Gamma$ (see Section \ref{two}) is equal to $k\mathbb{Z}$, the geodesic $k$-translation map $\phi^{\circ k}$ is mixing on $\Gamma\backslash S_o^k\mathcal{T}$ (\cite{BP},\cite{ro}). If we fix a reference state $s_0\in\mathcal{S}$, then we have a partition $(A_0,\cdots,A_{k-1})$ of $\mathcal{S}$ given by $A_m=\{s\in\mathcal{S}\colon p_{s_0s}^{(kn+m)}>0\textrm{ for some }n\in\mathbb{N}\}$. The exponential mixing property of $\phi^{\circ k}$ follows directly if the irreducible aperiodic $k$-step Markov chain $Z_{kn}$ on each subset $A_m\,(m=0,\cdots k-1)$ converges exponentially to the stationary distribution. To require such a good convergence property, we introduce the following \emph{WSG} property (\emph{weighted spectral gap}) for a pair of a full discrete group $\Gamma$ and a potential $\widetilde{F}$.

\begin{defn}\label{full} For a given discrete subgroup $\Gamma<\textrm{Aut}(\mathcal{T})$, let $\pi\colon\mathcal{T}\to\Gamma\backslash\mathcal{T}$ be the natural projection. We call $\Gamma_f=\{g\in\textrm{Aut}(\mathcal{T})\,|\,\pi\circ g=\pi\}$ the \emph{associated full subgroup} of $\Gamma$. A discrete subgroup $\Gamma<\textrm{Aut}(\mathcal{T})$ is called \emph{full} if $\Gamma=\Gamma_f$.
\end{defn}

\begin{defn} Let $\Gamma<\textrm{Aut}(\mathcal{T})$ be a full discrete subgroup and $\widetilde{F}\colon E\mathcal{T}\to\mathbb{R}$ be a $\Gamma$-invariant real valued function. Let $Z_{n}$ be the Markov chain with the data $(\mathcal{S},p_{ij},\pi_j)$ associated with $(\Gamma\backslash S\mathcal{T},\phi,m_{\Gamma,F}^{\nu^-,\nu^+})$ (see Section \ref{three}). Suppose that there is a function $t\colon \mathcal{S}\to\mathbb{R}_{\ge 0}$ given by $t(s_i)=t_i$, a finite subset $B\subset \mathcal{S}$ and a constant $0<\rho<1$ such that for every $s_i\in\mathcal{S}-B$, we have 
\begin{align}\label{WSG1}\sum_{s_j}p_{ij}t_jt_i^{-1}\le \rho.\end{align}
Then we say that $(\Gamma,\widetilde{F})$ has \emph{WSG} property with $(t,B,\rho)$.
If $\widetilde{F}$ is a constant, then we say that $\Gamma$ has \emph{WSG} property with $(t,B,\rho)$.
\end{defn}

 Now we can state our main theorem.

\begin{thm}\label{main} Let $\mathcal{T}$ be a locally finite uniform tree and let $\Gamma$ be a non-elementary full discrete subgroup of $\textrm{Aut}(\mathcal{T})$. Let $\widetilde{F}$ be a potential for $\Gamma$ such that $(\Gamma,\widetilde{F})$ has WSG property. If $|m_{\Gamma,F}^{\nu^-,\nu^+}|<\infty$ and $L_\Gamma=k\mathbb{Z}$, then for any $f,g\in C_c(\Gamma\backslash S_o^k\mathcal{T})$, as $n\to\infty$ we have
$$\left|\int_{\Gamma\backslash S_o^k\mathcal{T}} (f\circ \phi^{\circ kn})\cdot g\, dm_{\Gamma,F}^{\nu^-,\nu^+}-\int_{\Gamma\backslash S_o^k\mathcal{T}} f\,dm_{\Gamma,F}^{\nu^-,\nu^+}\int_{\Gamma\backslash S_o^k\mathcal{T}} g \,dm_{\Gamma,F}^{\nu^-,\nu^+}\right|=O(\theta^n),$$
for some constant $0<\theta<1$. For given $(\Gamma,\widetilde{F})$, the implied constant depends only on $f$ and $g$. \color{black}
\end{thm}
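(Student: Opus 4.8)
\emph{Outline of the argument.} The plan is to transport the dynamical statement to the Markov chain $(\mathcal{S},p_{ij},\pi_j)$ of Section~\ref{three}, to recognize the WSG hypothesis \eqref{WSG1} as exactly the drift input that makes this chain \emph{geometrically ergodic}, and then to read the decay of correlations off the resulting exponential convergence of the $n$-step transition probabilities. Concretely, after normalizing $m:=m_{\Gamma,F}^{\nu^-,\nu^+}/|m_{\Gamma,F}^{\nu^-,\nu^+}|$ to a probability measure, the coding of Section~\ref{three} identifies $(\Gamma\backslash S\mathcal{T},\phi,m)$ with the bilateral Markov shift attached to $(\mathcal{S},p_{ij},\pi_j)$; since $L_\Gamma=k\mathbb{Z}$ this chain has period $k$, with cyclic classes $A_0,\dots,A_{k-1}$, the space $\Gamma\backslash S_o^k\mathcal{T}$ corresponds to the paths whose time-$0$ state lies in one fixed class, and $\phi^{\circ kn}$ corresponds to $n$ steps of the irreducible, aperiodic $k$-step chain on that class. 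Any $f,g\in C_c(\Gamma\backslash S_o^k\mathcal{T})$ is uniformly approximated by functions depending on finitely many coordinates of the associated path and supported over finitely many states; for such locally constant functions the correlation, up to a bounded shift of the time parameter absorbed into the constant, becomes a finite sum $\sum_{i,j}G_i\big(p^{(kn)}_{ij}-\pi^{(k)}_j\big)F_j$, where $\pi^{(k)}$ is the stationary law of the $k$-step chain, $F$ is bounded and $\sum_i|G_i|t_i<\infty$. (The general continuous case then follows by approximation, at worst with $\theta$ depending on the modulus of continuity of $f$.) Hence it suffices to establish an exponential bound $\sum_j\big|p^{(kn)}_{ij}-\pi^{(k)}_j\big|\le C\,t_i\,\theta^{\,n}$ for the $k$-step chain on each $A_m$.

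For this, note that \eqref{WSG1} says precisely that the transition operator $P$, $(Ph)_i=\sum_j p_{ij}h_j$, satisfies the geometric drift bound $Pt\le\rho\,t$ on $\mathcal{S}\setminus B$; since $\mathcal{T}$ is locally finite every row of $P$ is finitely supported, so $Pt$ is finite everywhere and $Pt\le\rho t+b\mathbb{1}_B$ with $b=\max_{i\in B}(Pt)_i<\infty$. A supermartingale argument applied to $\rho^{-(n\wedge\tau_B)}t(X_{n\wedge\tau_B})$ then gives $\mathbb{E}_i\big[\rho^{-\tau_B}\big]\le t_i/\min_{j\in B}t_j$ for $i\notin B$ and a finite bound for $i\in B$, i.e.\ the return time to the finite set $B$ has a uniform exponential moment; in particular $\pi(t)<\infty$. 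Because $B$ is finite and the $k$-step chain is irreducible and aperiodic on each $A_m$, a uniform $N$-step minorization holds on $B\cap A_m$, so $B\cap A_m$ is petite; regeneration at a fixed reference state of $B\cap A_m$ (the chain $Z_n$ visits it only at times that are multiples of $k$), together with Kendall's renewal theorem for renewal sequences with exponentially small tails, yields the desired $\sum_j\big|p^{(kn)}_{ij}-\pi^{(k)}_j\big|\le C\,t_i\,\theta^{\,n}$. Equivalently, one may read \eqref{WSG1} as a Lasota--Yorke inequality for the $t$-weighted operator $D_t^{-1}PD_t$ (contraction by $\rho$ off $B$, plus a finite-rank remainder supported on $B$) and invoke the Ionescu-Tulcea--Marinescu/Hennion theorem to obtain quasi-compactness and hence a genuine spectral gap. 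Substituting this bound into the correlation sum and using $\sum_i|G_i|t_i<\infty$ completes the proof, with the implied constant depending only on $f$ and $g$ (through $\|f\|_\infty$, $\|g\|_\infty$ and the finite supports).

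The main obstacle is the middle step: in contrast to a finite-state subshift the chain lives on an infinite state space and carries no a priori spectral gap, so one genuinely must use the Lyapunov function $t$ provided by WSG to build the weighted Banach space on which $P$ becomes quasi-compact, and to keep the constants uniform enough that the estimate survives the passage to $C_c$ functions. Two further technical points require care and I would dispose of them as indicated: the periodicity of $Z_n$ (handled by restricting to the aperiodic $k$-step chains on the classes $A_m$, exactly as in the statement), and the fact that WSG is assumed for $P$ rather than for $P^k$, so that one should regenerate the original chain $Z_n$ at visits to a reference state of $B$—whose return times lie in $k\mathbb{Z}$ automatically—rather than attempt to transfer the drift inequality directly to $P^{\circ k}$.
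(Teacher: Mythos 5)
Your proposal is correct and follows essentially the same route as the paper: code $(\Gamma\backslash S\mathcal{T},\phi,m_{\Gamma,F}^{\nu^-,\nu^+})$ into the countable Markov chain of Section \ref{three}, read WSG as a drift bound giving exponential decay of the probabilities avoiding the finite set $B$ (the paper's Lemma \ref{lemma}), upgrade this to geometric convergence $|p_{ij}^{(kn)}-\pi_j|\le C_{ij}\theta^{kn}$, and conclude via the cylinder-function correlation estimate of Proposition \ref{correlation}, handling the period $k$ by passing to the aperiodic $k$-step chain on each cyclic class exactly as in the statement. The only difference is that where you invoke standard geometric-ergodicity machinery (supermartingale bound on return times, petite sets, Kendall's renewal theorem, or Hennion quasi-compactness), the paper proves the corresponding facts by hand with taboo probabilities and generating functions (Propositions \ref{key1} and \ref{key2}), which is the same mathematical content in self-contained form.
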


We remark here that if $(\Gamma,\widetilde{F})$ has WSG property with $(t,B,\rho)$, then $p_{ij}^{(n),B}\le t_it_j^{-1}\rho^n$ for each $s_i,s_j\in \mathcal{S}$ and this inequality is the key ingredient in the proof of Theorem \ref{main}.

Since every geometrically finite discrete subgroup $\Gamma<\textrm{Aut}(\mathcal{T})$ of a bi-regrular tree $\mathcal{T}_{r+1,s+1}$ satisfies WSG property (see Section \ref{four}), the above theorem implies that the geodesic $k$-translation map $\phi^{\circ k}\colon \Gamma\backslash S_o^k\mathcal{T}$ $\to\Gamma\backslash S_o^k\mathcal{T}$ is exponentially mixing for every geometrically finite group $\Gamma$. We also give an example of a group $\Gamma$ which is not geometrically finite but which has WSG property in Example \ref{example} (see Section \ref{four} for more examples and the detail).  Although there exists a Markov chain $(\mathcal{S},p_{ij},\pi_j)$ which does not satisfy the equation (\ref{WSG1}) (see Example \ref{counterexample}), we leave as a question whether there exists a $\Gamma<\textrm{Aut}(\mathcal{T})$ such that $|m_{\Gamma}^{\textrm{BM}}|<\infty$ without WSG property.

\begin{exam}\label{example} Let $q\ge 5$ be an arbitrary odd integer. Consider an edge-indexed ray of type $(2,q-1)$ (see Figure \ref{(2,q-1)}). If $\Gamma\backslash\backslash\mathcal{T}$ is a union of a finite graph of groups, finite such rays and finite funnels, then $\Gamma$ has WSG property. Therefore, by Theorem \ref{main} the geodesic $2$-translation map $\phi^{\circ 2}$ is exponentially mixing with respect to $m_\Gamma^{\textrm{BM}}$, which corresponds to the measure $m_{\Gamma,F}^{\nu^-,\nu^+}$ with the constant potential $\widetilde{F}$.

\end{exam}

\begin{figure}[H]\label{(2,q-1)}
\centering\includegraphics[width=0.8\linewidth]{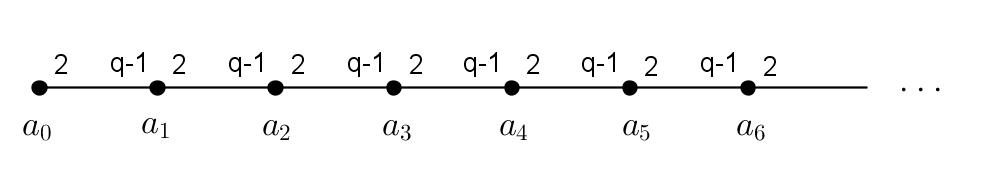}
\caption{Ray of type $(2,q-1)$}
\label{ray}
\end{figure}

\begin{exam}\label{counterexample}
This example shows the existence of a positive recurrent Markov chain which does not converge exponentially to the invariant distribution. Let $Z_n$ be the Markov chain on the state space $\mathcal{S}=\mathbb{Z}\cup\{\infty\}$ with the transition probabilities $p_{\infty n}=\beta_n$, $p_{nn}=\gamma_n$ and $p_{n \infty}=1-\gamma_n$. If $\gamma_n\to 1$ as $|n|\to\infty$, then the equation (\ref{WSG1}) does not hold for any $t\colon\mathcal{S}\to\mathbb{R}_{\ge 0}$. Nevertheless, if $\beta_n$ satisfies $$\sum_{n=-\infty}^{+\infty}\sum_{k=0}^{+\infty}\beta_n\gamma_n^k(1-\gamma_n)(k+2)<\infty,$$ then the invariant distribution $\pi_j=\lim_{k\to\infty}p_{ij}^{(k)}$ exists (see Subsection \ref{correlation}).
\begin{figure}[H]
\centering\includegraphics[width=0.7\linewidth]{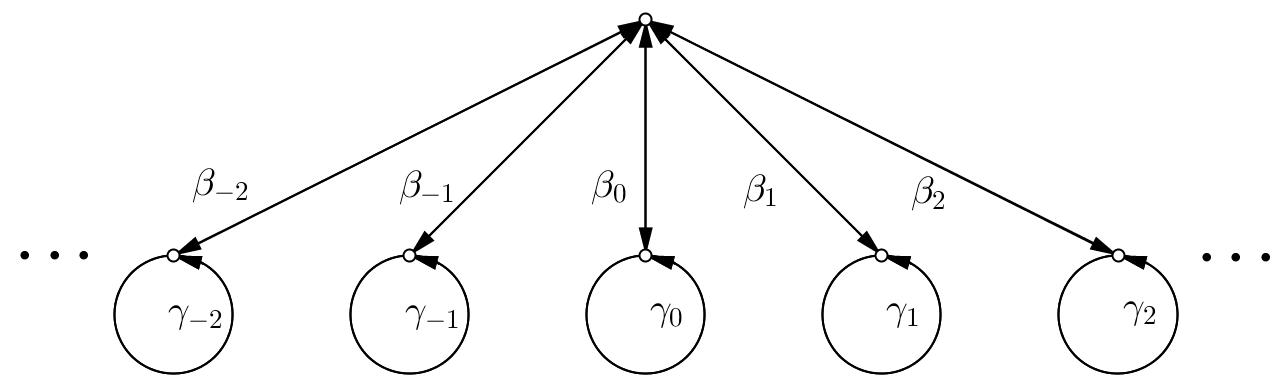}
\caption{Example of Markov chain which does not satisfy the equation (\ref{WSG1})}
\label{}
\end{figure}
\end{exam}
Now we consider an $\mathbb{F}$-rank one algebraic group $\underline{G}$ and its $\mathbb{F}$-rational points $G=\underline{G}(\mathbb{F})$. We will define a measure $\mathcal{M}_G^{(\Gamma)}$ on $G$ which depends on the group $\Gamma$ in Section \ref{five}. Due to the result of Roblin (\cite{ro}), for a family of compact subset $\{B_R\subset G\}$ which is \emph{well-rounded} (see Section \ref{five} for the definition), the number of points in the intersection of $B_R$ with a $\Gamma$-orbit is asymptotically $\mathcal{M}_G^{(\Gamma)}(B_R)$. From Theorem \ref{main}, we have the following corollary about the error rate of $|\Gamma g\cap B_R|$. 

\begin{coro} Let $\underline{G}$ be an $\mathbb{F}$-rank one semi-simple algebraic group over a local field $\mathbb{F}$, $G=\underline{G}(\mathbb{F})$ and $\mathcal{T}$ be the Bruhat-Tits tree of $G$. Let $\Gamma<G$ be a discrete subgroup with $|m_{\Gamma}^{\textrm{BM}}|<\infty$ with WSG property. Then for any well-rounded family $\{B_R\colon R>1\}$ of compact subsets of $G$, 
there is a constant $\eta>0$ for which we have
$$\vert\Gamma g\cap B_R\vert =\mathcal{M}_G^{(\Gamma)}(B_R)+O(\mathcal{M}_G^{(\Gamma)}(B_R)^{1-\eta})$$
as $R\to\infty$.
\end{coro}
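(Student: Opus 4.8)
The plan is to deduce the Corollary from Theorem \ref{main} by the standard mechanism that converts exponential mixing of the geodesic flow into an effective orbit count, following Roblin's argument but keeping track of the error term. First I would recall Roblin's setup: for a well-rounded family $\{B_R\}$ the leading-order asymptotic $|\Gamma g\cap B_R|\sim\mathcal{M}_G^{(\Gamma)}(B_R)$ is obtained by writing the counting function as a sum over $\Gamma$ of indicator functions and identifying this, via the Cartan (or Bruhat--Tits) decomposition of $G$, with a sum of matrix coefficients $\int_{\Gamma\backslash S_o^k\mathcal{T}}(f\circ\phi^{\circ kn})\cdot g\,dm_{\Gamma,F}^{\nu^-,\nu^+}$ for suitable bump functions $f,g$ adapted to $B_R$. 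The radial parameter $R$ corresponds (up to a bounded factor depending on the residue degrees) to the number of iterates $kn$ of $\phi$, so that $\mathcal{M}_G^{(\Gamma)}(B_R)$ grows like a constant times $e^{\delta R}$ where $\delta$ is the critical exponent; this is where $L_\Gamma=k\mathbb{Z}$ and the finiteness $|m_{\Gamma}^{\mathrm{BM}}|<\infty$ enter.

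Next I would replace the sharp indicator $\mathbf{1}_{B_R}$ by smooth (locally constant, on the tree) inner and outer approximations $f^-_\varepsilon\le \mathbf{1}\le f^+_\varepsilon$ at scale $\varepsilon$, chosen so that, by well-roundedness, $\mathcal{M}_G^{(\Gamma)}(f^+_\varepsilon - f^-_\varepsilon)\le C\varepsilon\,\mathcal{M}_G^{(\Gamma)}(B_R)$. Applying Theorem \ref{main} to each of $f^\pm_\varepsilon$ against the fixed test function $g$ built from $g$ itself, one gets
$$\Big|\,|\Gamma g\cap B_R| - \mathcal{M}_G^{(\Gamma)}(B_R)\,\Big|\ \le\ C\varepsilon\,\mathcal{M}_G^{(\Gamma)}(B_R)\ +\ C(\varepsilon)\,\theta^{n(R)},$$
where $n(R)\asymp R/k$ and $C(\varepsilon)$ is the implied constant of Theorem \ref{main}, which blows up polynomially in $1/\varepsilon$ (it depends on the $C_c$-norms and the supports of the test functions, hence on how finely we have chopped $B_R$). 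Since $\mathcal{M}_G^{(\Gamma)}(B_R)\asymp e^{\delta R}$ and $\theta^{n(R)} = e^{-c R}$ for some $c>0$, optimizing the trade-off by taking $\varepsilon = e^{-\eta' R}$ for a small enough $\eta'>0$ balances the two error contributions and yields the bound $O(\mathcal{M}_G^{(\Gamma)}(B_R)^{1-\eta})$ for some $\eta>0$. I would then translate the resulting estimate back through the identification $|m_{\Gamma}^{\mathrm{BM}}|<\infty \Leftrightarrow$ finiteness of $\mathcal{M}_G^{(\Gamma)}$, and invoke WSG so that Theorem \ref{main} applies with an explicit $\theta$.

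The main obstacle I anticipate is bookkeeping the dependence of the implied constant in Theorem \ref{main} on the approximating functions $f^\pm_\varepsilon$: the theorem as stated gives, for fixed $f,g$, an error $O(\theta^n)$ with a constant depending on $f$ and $g$, but to run the $\varepsilon$--$n$ trade-off I need to know that this constant grows at most polynomially (or at least sub-exponentially) in the relevant complexity parameter of $f^\pm_\varepsilon$ — essentially the number of cells of $\mathcal{T}$ on which they are non-constant, or equivalently $1/\varepsilon$ in tree-distance units. This requires revisiting the proof of Theorem \ref{main} to extract a quantitative form, presumably of the shape $C(f,g)\le C\,\|f\|_\infty\|g\|_\infty\,(\text{diam of supports})^{O(1)}$, using the inequality $p_{ij}^{(n),B}\le t_it_j^{-1}\rho^n$ highlighted after the theorem. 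A secondary technical point is checking that well-roundedness of $\{B_R\}$ in $G$ transfers to the $(\varepsilon)$-regularity of the induced functions on $\Gamma\backslash S_o^k\mathcal{T}$ under the Cartan decomposition, and that the constant $\delta$ governing the growth of $\mathcal{M}_G^{(\Gamma)}(B_R)$ is strictly positive and matches the exponent implicit in $\theta$; both follow from non-elementarity of $\Gamma$ and the structure theory of $\mathbb{F}$-rank one groups, but they must be stated carefully to make the exponent $\eta$ meaningful.
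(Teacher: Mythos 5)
There is a genuine gap at the heart of your plan: you propose to feed the counting problem directly into Theorem \ref{main}, writing the relevant correlations as integrals $\int (f\circ\phi^{\circ kn})\cdot g\, dm_{\Gamma,F}^{\nu^-,\nu^+}$. But when you unfold the counting function \`a la Duke--Rudnick--Sarnak, what appears is $\int_{g\in B_R}\langle \Phi_1, g\cdot\Phi_2\rangle_{L^2(\Gamma\backslash G)}\,dg$, i.e.\ matrix coefficients with respect to the \emph{Haar} measure $m^{\textrm{Haar}}$ on $\Gamma\backslash G$, not with respect to the finite Bowen--Margulis measure $m_\Gamma^{\textrm{BM}}$. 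Since $\Gamma$ is not assumed to be a lattice, these are genuinely different objects: $m_\Gamma^{\textrm{BM}}$ lives on the non-wandering part of $\Gamma\backslash S\mathcal{T}$, while the orbit points being counted are governed by Haar. The missing bridge is exactly what the paper spends Subsections 5.1--5.2 on: the effective equidistribution of the expanding horospherical measures $\mu_{\mathcal{H}}^{\textrm{Leb}}$ pushed by $\phi^{\circ 2n}$, renormalized by $e^{(h_\mathcal{T}-\delta_\Gamma)n}$, toward the Burger--Roblin measures $m^{\textrm{BR}}_\Gamma$ and $m^{\textrm{BR}}_{\ast,\Gamma}$, which yields the decay of Haar matrix coefficients $\frac{q^{(d+d')j}}{e^{2\delta_\Gamma j}}\langle a_{2j}\cdot\Phi_1,\Phi_2\rangle=\frac{m^{\textrm{BR}}_\Gamma(\Phi_1)m^{\textrm{BR}}_{\ast,\Gamma}(\Phi_2)}{|m_\Gamma^{\textrm{BM}}|}+O(e^{-\kappa j})$ (Corollary \ref{Roblin2}). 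Without this step you cannot see why the main term is $\mathcal{M}_G^{(\Gamma)}(B_R)$ at all: that measure is built from the Patterson--Sullivan density $\nu_o$ on $K\times K$ with the weight $e^{2\delta_\Gamma j}q^{-(d+d')j}\Delta(2j)$, and this weight is precisely the renormalization factor produced by the passage from Haar to Burger--Roblin, not anything visible at the level of $m_{\Gamma,F}^{\nu^-,\nu^+}$-mixing alone.

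Two secondary points. First, your assertion $\mathcal{M}_G^{(\Gamma)}(B_R)\asymp e^{\delta R}$ is justified only for radial or bisector-type families (as in Example \ref{well-roundness of bisectors}); a general well-rounded family need not grow this way, and the paper's argument does not use such growth: it sandwiches $|\Gamma g\cap B_R|$ between $\langle F^{\mp}_{B_{R,\epsilon}},\Phi_\epsilon\otimes\Phi_\epsilon\rangle$, applies the matrix-coefficient asymptotic inside the Cartan integral, converts the resulting Burger--Roblin factors back into $\mathcal{M}_G^{(\Gamma)}(B^{\pm}_{R,c\epsilon})$ via the Iwasawa-coordinate formulas of Subsection 5.2, and then invokes effective well-roundedness to absorb the $\epsilon$-enlargement into $O(\epsilon^p)\mathcal{M}_G^{(\Gamma)}(B_R)$. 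Second, your worry about the dependence of the implied constant in Theorem \ref{main} on the approximants is legitimate but is handled differently in this non-Archimedean setting: the test functions $\Phi_\epsilon$ are bump functions at scale $\epsilon$ near the identity (here $K$ is open, so these can be taken locally constant), and the $\epsilon$ versus mixing-rate trade-off is run at the level of Corollary \ref{Roblin2}, not by re-chopping $B_R$ into many pieces on which Theorem \ref{main} is applied separately. So the correct repair of your proposal is to interpolate the chain BM-mixing $\Rightarrow$ effective horospherical equidistribution $\Rightarrow$ effective Haar matrix-coefficient decay before the unfolding-and-sandwiching step, rather than applying Theorem \ref{main} to the counting correlations directly.
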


More generally, we can count the number of $\Gamma$-orbits with weights corresponding to the given potential $\widetilde{F}\colon E\mathcal{T}\to\mathbb{R}$. Let
$$\mathcal{N}_x(n)=\sum_{\gamma\colon d_\mathcal{T}(\gamma x,x)\le n}e^{\int_x^{\gamma x}\widetilde{F}}$$ be the weighted counting function of paths of length at most $n$ in $\mathcal{T}$ which start at $x$ and end at $\gamma x$. Then using the equidistribution of the skinning measure as in \cite{PPS}, we have the following corollary.
\begin{coro} Let $x$ be a degree $q^d+1$ vertex of the Bruhat-Tits tree $\mathcal{T}$ of $G$ and suppose that $(\Gamma,\widetilde{F})$ is a pair of discrete subgroup $\Gamma<G$ and a potential $\widetilde{F}$ for $\Gamma$ with WSG property.
Then as $n\to\infty$, we have
$$\mathcal{N}_x(2n)=\frac{e^{2\delta_{\Gamma,F}}\|\nu^-_x\|\|\nu^+_x\||\Gamma_x|}{(e^{2\delta_{\Gamma,F}}-1)\|m_{\Gamma,F}^{\nu^-,\nu^+}\|}e^{2n\delta_{\Gamma,F}}+O(e^{(2\delta_{\Gamma,F}-\kappa)n})$$
for some $\kappa>0$. 
\end{coro}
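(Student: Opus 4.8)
The plan is to derive this counting corollary from Theorem~\ref{main} via the standard dictionary between the geodesic translation map on $\Gamma\backslash S\mathcal{T}$ and orbital counting, following the scheme of \cite{PPS} but keeping track of the error term supplied by exponential mixing. First I would fix the vertex $x$ of degree $q^d+1$ and observe that, because $\mathcal{T}$ is the Bruhat-Tits tree of an $\mathbb{F}$-rank one group, the length spectrum satisfies $L_\Gamma=k\mathbb{Z}$ with $k=1$ or $k=2$; writing $2n=kn'$ reduces everything to iterates of $\phi^{\circ k}$, for which Theorem~\ref{main} applies. The weighted count $\mathcal{N}_x(2n)$ can be rewritten, after grouping $\gamma$'s by the combinatorial type of the geodesic segment from $x$ to $\gamma x$ and using $\Gamma$-invariance of $\widetilde{F}$, as a pairing of push-forwards of the Patterson densities $\nu^\pm_x$ against the measure $m_{\Gamma,F}^{\nu^-,\nu^+}$ along orbits of $\phi^{\circ k}$. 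Concretely, one realizes $\mathcal{N}_x(2n)$ as a sum $\sum_{j\le 2n}$ of matrix-coefficient-type integrals $\int (f_x\circ\phi^{\circ j})\, g_x\, dm_{\Gamma,F}^{\nu^-,\nu^+}$, where $f_x,g_x$ are the (not quite compactly supported, but rapidly controlled) functions encoding the skinning measure of $x$ and the local geometry at $x$; the normalizing constants $\|\nu^\pm_x\|$ and $|\Gamma_x|$ enter precisely here.

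Next I would insert the asymptotic from Theorem~\ref{main}: each such integral equals $\|m_{\Gamma,F}^{\nu^-,\nu^+}\|^{-1}\int f_x \int g_x + O(\theta^{j})$, and the main terms $\int f_x\int g_x$ form a geometric-type series in $e^{2\delta_{\Gamma,F}}$ (the growth coming from the degree $q^d+1$ and the critical exponent), summing to $\tfrac{e^{2\delta_{\Gamma,F}}}{e^{2\delta_{\Gamma,F}}-1}e^{2n\delta_{\Gamma,F}}$ times the product $\|\nu^-_x\|\|\nu^+_x\||\Gamma_x|/\|m_{\Gamma,F}^{\nu^-,\nu^+}\|$, which is the stated leading coefficient. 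The error contributions $\sum_{j\le 2n} e^{(2n-j)\delta_{\Gamma,F}}\,\theta^{j}$ (the weight $e^{(2n-j)\delta_{\Gamma,F}}$ accounting for the mass of the cross-sections) sum to $O(e^{(2\delta_{\Gamma,F}-\kappa)n})$ for any $\kappa$ with $0<\kappa<\delta_{\Gamma,F}$ and $e^{-\kappa}>\theta\, e^{-\delta_{\Gamma,F}}$, i.e. $\kappa$ small enough relative to $\theta$; this is where one must be slightly careful, but it is a routine geometric-sum estimate once the exponential mixing rate $\theta$ is in hand.

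The main obstacle is that $f_x$ and $g_x$ are not literally in $C_c(\Gamma\backslash S_o^k\mathcal{T})$, so Theorem~\ref{main} cannot be applied verbatim: the skinning measure of $x$ and the ``arrival'' cross-section at distance $j$ live on non-compact pieces of $\Gamma\backslash S\mathcal{T}$, and their total masses grow. I would handle this exactly as in \cite{PPS}: truncate to compact core sets, apply Theorem~\ref{main} on the compact part, and bound the tails using the finiteness $\|m_{\Gamma,F}^{\nu^-,\nu^+}\|<\infty$ together with the WSG estimate $p_{ij}^{(n),B}\le t_i t_j^{-1}\rho^n$ quoted after Theorem~\ref{main}, which gives summable decay of the mass that escapes any finite subset $B\subset\mathcal{S}$. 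Choosing the truncation level to grow linearly in $n$ with a small slope balances the truncation error against $\theta^n$ and yields the uniform rate $\kappa$. A secondary technical point is the equidistribution of the skinning measure itself (needed to identify the constant $\|\nu^+_x\|$ and to pass from the discrete orbit count to the dynamical integral); I would cite the effective version that follows from Theorem~\ref{main} and is spelled out in the companion discussion of \cite{PPS}, rather than reprove it here.
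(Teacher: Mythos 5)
Your overall route -- exponential mixing from Theorem \ref{main}, upgraded to effective equidistribution of the skinning measure of the point $x$ pushed by the geodesic translation (Remark \ref{skinning}), then shell-by-shell counting in the style of \cite{PPS} and \cite{BPP}, with a truncation to a compact part controlled by $\|m_{\Gamma,F}^{\nu^-,\nu^+}\|<\infty$ and the WSG bound $p_{ij}^{(n),B}\le t_it_j^{-1}\rho^n$ -- is exactly the scheme the paper itself invokes (it delegates the details to \cite{PPS}, \cite{BPP}). The gap is in the quantitative step you flag as ``routine'': your error sum $\sum_{j\le 2n} e^{(2n-j)\delta_{\Gamma,F}}\theta^{j}$ does not give the claimed bound. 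Its $j=0$ term alone is $e^{2n\delta_{\Gamma,F}}$, i.e.\ already of the same order as the main term, so whenever $\theta e^{-\delta_{\Gamma,F}}<1$ the whole sum is $\asymp e^{2n\delta_{\Gamma,F}}$ and can never be $O\bigl(e^{(2\delta_{\Gamma,F}-\kappa)n}\bigr)$ for any $\kappa>0$; the side condition $e^{-\kappa}>\theta e^{-\delta_{\Gamma,F}}$ does not repair this. The weighting is inverted: the shells near $x$, where mixing has not yet taken effect (error factor $\theta^{j}\approx 1$), carry only $O(1)$ mass, while the distant shells carry the mass $\asymp e^{2\delta_{\Gamma,F}j}$ but enjoy the exponential decay of correlations at time $\asymp 2j$.

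The correct bookkeeping attaches the mixing error to the mass of the shell it equidistributes: the weighted count over $\{\gamma\,\colon\, d(x,\gamma x)=2j\}$ is $c\,e^{2\delta_{\Gamma,F}j}\bigl(1+O(\theta^{j})\bigr)$ with $c=\|\nu_x^-\|\,\|\nu_x^+\|\,|\Gamma_x|/\|m_{\Gamma,F}^{\nu^-,\nu^+}\|$, so the absolute error of the $j$-th shell is $O\bigl(e^{2\delta_{\Gamma,F}j}\theta^{j}\bigr)$ and summing over $j\le n$ gives
\begin{equation*}
\sum_{j\le n}e^{2\delta_{\Gamma,F}j}\theta^{j}=O\bigl(n+e^{(2\delta_{\Gamma,F}+\log\theta)n}\bigr)=O\bigl(e^{(2\delta_{\Gamma,F}-\kappa)n}\bigr)
\end{equation*}
for a suitable $\kappa>0$ (using $\delta_{\Gamma,F}>0$), while the main terms $c\,e^{2\delta_{\Gamma,F}j}$ sum to the stated constant times $e^{2n\delta_{\Gamma,F}}$ up to $O(1)$. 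With this correction, and with the truncation level chosen as you indicate (this is also where you must track how the implied constant in Theorem \ref{main} depends on the truncated test functions, since it is not uniform over all $f,g$), your argument goes through and coincides with the paper's intended proof; note also that in the tree setting the shadows are clopen, so no $\epsilon$-regularization loss is needed, and $L_\Gamma=2\mathbb{Z}$ is automatic here because $G$ acts without inversions.
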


This paper is organized as follows. In Section 2, we recall preliminaries including the notion of edge-indexed graph and graph of groups, limit points and conformal densities on the space $\Gamma\backslash \textrm{Aut}(\mathcal{T})$, and the Bruhat-Tits tree of rank 1 algebraic groups. In Section 3, we translate the dynamical system $(\Gamma\backslash S\mathcal{T},\phi,m_{\Gamma,F}^{\nu^-,\nu^+})$ into a countable Markov chain $(\mathcal{S},p_{ij},\pi_j)$ following \cite{BM}. In Section 4, we define WSG property and prove our main theorem. We also give some examples of discrete groups with WSG property. Finally, we prove an effective formula of the number of $\Gamma$ orbits in the Bruhat-Tits tree $\mathcal{T}$ of $G$ in Section 5.

\bigskip
\noindent\emph{Acknowledgement.} We would like to express our gratitude to Seonhee Lim for her patient help and suggestions. We also thank Fr\'ed\'eric Paulin for his interest and giving numerous comments. The author is supported in part by Subsequent Generation Scholarship of Basic Research Program in Seoul National University and NRF-2013R1A1A2011942.

\section{Notations and Preliminaries}\label{two}
\subsection{Graph of groups and edge-indexed graph}\label{edge-indexed graph}
For a graph $A$, we denote by $VA$ the set of vertices of $A$ and by $EA$ the set of \emph{oriented} edges of $A$. For $e\in EA$, let $\overline{e}\in EA$ be the opposite edge of $e$ and let $\partial_0e$ and $\partial_1e$ be the initial vertex and the terminal vertex of $e$, respectively.

By a \emph{graph of groups} $\mathbf{A}=(A,\mathcal{A})$ we mean a connected graph $A$ together with groups $\mathcal{A}_a$ $(a\in VA)$, $\mathcal{A}_e=\mathcal{A}_{\overline{e}}$ $(e\in EA)$, and monomorphisms $\alpha_e\colon\mathcal{A}_e\to\mathcal{A}_{\partial_1e}$ $(e\in EA)$. An \emph{isomorphism} between two graph of groups $\mathbf{A}=(A,\mathcal{A})$ and $\mathbf{A'}=(A',\mathcal{A'})$ is an isomomorphism $\phi\colon A\to A'$ between two underlying graphs together with the set of isomomorphisms $\phi_a\colon \mathcal{A}_a\to \mathcal{A'}_{\phi(a)}$ and $\phi_e\colon \mathcal{A}_e\to \mathcal{A'}_{\phi(e)}$ satisfying the following property: for each $e\in EA$, there is an element $h_e\in \mathcal{A'}_{\phi(\partial_1e))}$ such that 
$$\phi_{\partial_1e}(\alpha_e(g))=h_e\cdot(\alpha'_{\phi(e)}(\phi_e(g)))\cdot h_e^{-1}$$
for all $g\in\mathcal{A}_e$ (\cite{B}).

For each $e\in EA$, we put $i(e)=[\mathcal{A}_a\colon\alpha_e\mathcal{A}_e]$. We then call $I(\mathbf{A})=(A,i)$ the \emph{edge-indexed graph} of $\mathbf{A}$. We will assume that all indices $i(e)$ are finite and define $q(e)=i(\overline{e})/i(e)\in\mathbb{Q}_{>0}^{\times}$ for each $e\in EA$. Let $\pi(A)$ be the path group of $A$. Since $q(\overline{e})=q(e)^{-1}$, it follows that $q\colon \pi (A)\to\mathbb{Q}_{>0}^{\times}$ \color{black}defines a homomorphism.

Given a locally finite tree $\mathcal{T}$, let $\Gamma$ be a closed subgroup of $\textrm{Aut}(\mathcal{T})$, which is the group of all isometries of $\mathcal{T}$ acting without inversions. Then the quotient graph $\Gamma\backslash\mathcal{T}$ has a natural structure of graph of groups, which we will denote by $\Gamma\backslash\backslash \mathcal{T}$, as follows. For each $v\in V(\Gamma\backslash\mathcal{T})$ and $e\in E(\Gamma\backslash\mathcal{T})$, choose any corresponding vertex $\widetilde{v}\in V\mathcal{T}$ and edge $\widetilde{e}\in E\mathcal{T}$. Let $\overline{\widetilde{e}}=\widetilde{\overline{e}}$ and fix an element $\gamma_e\in\Gamma$ which satisfies $\gamma_e\widetilde{\partial_1(e)}=\partial_1(\widetilde{e})$. Define $\mathcal{A}_v$ and $\mathcal{A}_e$ be the stabilizer of $\widetilde{v}$ and $\widetilde{e}$ in $\Gamma$, respectively, and let $\alpha_e\colon\mathcal{A}_e\to\mathcal{A}_{\partial_1e}$ be the monomorphism $h\mapsto \gamma_e^{-1}h\gamma_e$. Then the \emph{quotient graph of groups} $\Gamma\backslash\backslash\mathcal{T}=(\Gamma\backslash\mathcal{T},\mathcal{A})$ does not depend on the choice of $\widetilde{v},\widetilde{e}$ and $\gamma_e$, up to isomorphism of graph of groups. Let $I(\Gamma\backslash\backslash\mathcal{T})=(\Gamma\backslash\mathcal{T},i)$ be the edge-indexed graph of $\Gamma\backslash\backslash\mathcal{T}$. Conversely, if $(A,\mathcal{A})$ is a graph of groups and $(A,i)$ is the corresponding edge-indexed graph, then fixing a basepoint $a_0\in VA$, the universal covering tree $\mathcal{T}=\widetilde{(A,a_0)}$ and the natural projection $\pi\colon \mathcal{T}\to A$ depend only on the edge indexed graph $(A,i)$ (\cite{Se}).

\subsection{Geometrically finite discrete groups acting on locally finite trees\color{black}}
For a given locally finite simplicial tree $\mathcal{T}$, we define a metric $d$ \color{black}on $V\mathcal{T}$ so that $d(u,v)$ is the number of edges of the segment between $u$ and $v$. Let us denote by $S\cal{T}$ the space of all bi-infinite geodesics in $\cal{T}$, i.e., the set of isometries $\xi\colon\mathbb{Z}\to V\mathcal{T}$. We call an isometry $r\colon \mathbb{N}\cup\{0\}\to V\mathcal{T}$ a \emph{ray}.
Let $\widetilde{\phi}\colon S\mathcal{T}\to S\mathcal{T}$ be the \emph{forward geodesic translation} given by $(\widetilde{\phi}\xi)(t)=\xi(t+1)$ and $\widetilde{\iota}\colon S\mathcal{T}\to S\mathcal{T}$ be the \emph{geodesic inversion} given by $(\widetilde{\iota}\xi)(t)=\xi(-t+1)$. Then $\widetilde{\phi}$ and $\widetilde{\iota}$ commute with the action of $\psi\in \textrm{Aut}(\mathcal{T})$. The group $\mathbb{Z}$ acts on $S\mathcal{T}$ by $n\mapsto \widetilde{\phi}^{\circ n}$. 

Let $\widetilde{\pi}\colon S\mathcal{T}\to \Gamma\backslash S\mathcal{T}$ be the natural projection map and let $\phi\colon\Gamma\backslash S\mathcal{T}\to\Gamma\backslash S\mathcal{T}$ be the induced forward geodesic translation given by $\phi\circ\widetilde{\pi}=\widetilde{\pi}\circ\widetilde{\phi}$ and let $\iota\colon\Gamma\backslash S\mathcal{T}\to \Gamma\backslash S\mathcal{T}$ be the induced geodesic inversion defined similarly.



The \emph{Busemann cocycle} is the map $\beta\colon\partial_\infty\mathcal{T}\times\mathcal{T}\times\mathcal{T}\to\mathbb{Z}$, given by 
$$(\omega,x,y)\mapsto\beta_\omega(x,y)=d(x,v)-d(y,v)$$
for some (hence any) $v\in r_x\cap r_y$ where $r_x$ and $r_y$ are the rays starting from $x$ and $y$, respectively, and converging to $\omega$. 
The Busemann cocycle satisfies the following properties: $\beta_\omega(x,y)+\beta_\omega(y,z)=\beta(x,z)$, $\beta_\omega(x,y)=-\beta_\omega(y,x)$ and $\beta_{g\cdot\omega}(g\cdot x,g\cdot y)=\beta_\omega(x,y)$ for all $g\in G$, $x,y,z\in\mathcal{T}$ and $\omega\in\partial_\infty\mathcal{T}$.

Given $\xi\in S\mathcal{T}$, let $\xi^+\in\partial_\infty\mathcal{T}$ be the positive end and $\xi^-\in\partial_\infty\mathcal{T}$ be the negative end. A \emph{contracting horosphere} based at $\xi^+$ is the subset $\mathcal{H}_\xi^+=\{\eta\in S\mathcal{T}\,|\,\eta^+=\xi^+\textrm{ and }\beta_{\xi^+}(\xi(0),\eta(0))=0\}$. An \emph{expanding horosphere} based at $\xi^-$ is the subset $\mathcal{H}_{\xi}^-=\{\eta\in S\mathcal{T}\,|\,\eta^-=\xi^-\textrm{ and }\beta_{\xi^-}(\xi(0),\eta(0))=0\}$.

Let $\Gamma$ be a discrete subgroup of $\textrm{Aut}(\mathcal{T})$. The \emph{limit set} $\Lambda_\Gamma$ of $\Gamma$ is the set of accumulation points of a $\Gamma$-orbit in $\cal{T}$. By the discreteness of $\Gamma$, we have $\Lambda_\Gamma\subseteq\partial_\infty\cal{T}$. The \emph{convex hull} $C\Lambda_\Gamma$ of $\Gamma$ is the smallest convex subset of $\overline{\cal{T}}=\mathcal{T}\cup\partial_\infty\cal{T}$ containing $\Lambda_\Gamma$. \par 
A point $\omega\in\Lambda_\Gamma$ is called a \emph{conical limit point} if there is a sequence $(\gamma_n)_{n\in\mathbb{N}}$ of $\Gamma$ such that for any point $x\in\cal{T}$, any ray $c$ which converges to $\omega$ and any $n\in\mathbb{N}$, we have $d(\gamma_nx,c)\le C$ for some $C>0$.
A point $\omega\in\Lambda_\Gamma$ is called a \emph{horocyclic limit point} if there is a sequence $(\gamma_n)_{n\in\mathbb{N}}$ of $\Gamma$ such that for any point $x\in\cal{T}$, we have $\displaystyle \lim_{n\to\infty}\beta_{\omega}(\gamma_nx,x)=\infty$.

\begin{defn}\label{cuspidal} A point $\omega\in\partial_\infty\mathcal{T}$ is called a $\Gamma$-\emph{parabolic point} if the stabilizer $\Gamma_\omega$ fixes no point in $\partial_\infty\mathcal{T}$ other than $\omega$ and fixes no vertex of $\mathcal{T}$. It is called $\Gamma$-\emph{cuspidal} if, further, for any ray with vertex sequence $x_0,x_1,x_2,x_3,\cdots$ toward $\omega$, we have $\Gamma_{x_n}\le \Gamma_\omega$ for any sufficiently large $n>0$. 
\end{defn}

A point $\omega\in\Lambda_\Gamma$ is called a \emph{bounded parabolic point} if the stabilizer $\Gamma_\omega$ of $\omega$ acts properly discontinuously and cocompactly on $\Lambda_\Gamma\backslash\{\omega\}$.

\begin{prop}[\cite{P}, Theorem 1.1]\label{Paulin} Let $\Gamma$ be a discrete subgroup of $\textrm{Aut}(\mathcal{T})$ and let us denote the minimal $\Gamma$-invariant subtree of $\cal{T}$ by ${\mathcal{T}}_{min}$. Then the followings are equivalent:\par\color{black}

\begin{enumerate}[(a)]\setlength\itemsep{-\parsep} 

\item Every limit point of $\Gamma$ is either a conical limit point or a bounded parabolic point.
\item Every limit point of $\Gamma$ is either a horocyclic limit point or a bounded parabolic point. 
\item The quotient graph of groups $\Gamma\backslash\backslash {\mathcal{T}}_{min}$ is a union of a finite graph of finite groups and a finite number of $\Gamma$-cuspidal rays of groups.
\end{enumerate}
\end{prop}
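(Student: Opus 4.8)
The plan is to prove the cycle of implications $(c)\Rightarrow(a)\Rightarrow(b)\Rightarrow(c)$. We may assume $\Gamma$ is non-elementary (the cases $|\Lambda_\Gamma|\le 2$ are immediate) and work throughout inside the minimal subtree $\mathcal{T}_{min}=C\Lambda_\Gamma$, with a fixed basepoint $o\in V\mathcal{T}_{min}$. The recurring dictionary is: $\omega\in\Lambda_\Gamma$ is a conical limit point exactly when the orbit $\Gamma o$ stays within a bounded distance of a ray $[o,\omega)$ arbitrarily far along it, and $\omega$ is horocyclic exactly when $\Gamma o$ penetrates every horoball centred at $\omega$; both notions, read off from the geodesic $[o,\omega)$ and the Busemann cocycle, translate into combinatorial statements about how the projection of $[o,\omega)$ to $\Gamma\backslash\mathcal{T}_{min}$ behaves. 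The implication $(a)\Rightarrow(b)$ is then the easy one: a conical point is horocyclic, since if $\gamma_n o\to\omega$ with $d(\gamma_n o,[o,\omega))\le C$, the nearest-point projections of $\gamma_n o$ onto $[o,\omega)$ recede to infinity along the ray, so $|\beta_\omega(\gamma_n o,o)|\to\infty$ by the cocycle identity and the bound $C$; and ``bounded parabolic'' appears verbatim in both $(a)$ and $(b)$.

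For $(c)\Rightarrow(a)$, write $\Gamma\backslash\backslash\mathcal{T}_{min}$ as a finite graph of finite groups $\mathbf{K}$ with finitely many $\Gamma$-cuspidal rays of groups $\mathbf{R}_1,\dots,\mathbf{R}_m$ attached. Lifting $\mathbf{R}_i$ gives $\Gamma$-orbits of rays all converging to a single $\Gamma$-orbit $\Gamma\omega_i$ of parabolic points; cuspidality ($\Gamma_{x_n}\le\Gamma_{\omega_i}$ along a lifted ray) together with finiteness of $\mathbf{K}$ lets one verify that $\Gamma_{\omega_i}$ acts cocompactly on $\Lambda_\Gamma\setminus\{\omega_i\}$, so $\omega_i$ is a bounded parabolic point. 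For any other $\omega\in\Lambda_\Gamma$, the projection of a ray $[o,\omega)$ to $\Gamma\backslash\mathcal{T}_{min}$ cannot be eventually trapped in some $\mathbf{R}_i$ (that would force $\omega\in\Gamma\omega_i$), hence it meets the compact core $K$ infinitely often; pulling these returns back up the covering, infinitely many $\Gamma$-translates of a fixed lift $\widetilde K$ meet $[o,\omega)$, which produces $\gamma_n o\to\omega$ with $d(\gamma_n o,[o,\omega))\le\operatorname{diam}\widetilde K+d(o,\widetilde K)$. Thus $\omega$ is conical.

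The implication $(b)\Rightarrow(c)$ carries the weight. Assume every limit point is horocyclic or bounded parabolic. Choose, $\Gamma$-equivariantly, a horoball $H_\omega\subset\mathcal{T}_{min}$ at each bounded parabolic point $\omega$, shrunk so that the family is pairwise disjoint, and set $Y:=\mathcal{T}_{min}\setminus\bigcup_\omega\operatorname{int}H_\omega$. The crucial claim is that $\Gamma\backslash Y$ is compact. Granting this: there are finitely many $\Gamma$-orbits of vertices in $Y$, hence finitely many orbits of removed horoballs and so finitely many cusps; the vertex and edge stabilizers over $Y$ are compact open and discrete, hence finite; and each removed horoball, using bounded-parabolicity of its base point, contributes a $\Gamma$-cuspidal ray of groups. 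Assembling these, $\Gamma\backslash\backslash\mathcal{T}_{min}$ is a finite graph of finite groups with finitely many $\Gamma$-cuspidal rays of groups attached, which is $(c)$.

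It remains to prove $\Gamma\backslash Y$ compact, and this is the main obstacle. If it fails, choose $z_k\in Y$ with $d(z_k,\Gamma o)\to\infty$; then $d(z_k,o)\to\infty$, so after passing to a subsequence $[o,z_k]\to[o,\omega)$ for some $\omega\in\Lambda_\Gamma$. If $\omega$ lies in the $\Gamma$-orbit of a bounded parabolic point, one invokes cocompactness of $\Gamma_\omega$ on $\Lambda_\Gamma\setminus\{\omega\}$ — the tree analogue of ``a cusp region minus its point is compact modulo $\Gamma_\omega$'' — to see that the points of $Y$ heading toward $\omega$ stay within bounded distance of $\Gamma_\omega\cdot o\subset\Gamma o$, contradicting $d(z_k,\Gamma o)\to\infty$; if $\omega$ is horocyclic, one shows $\Gamma o$ then shadows $[o,\omega)$ arbitrarily far out (so $\omega$ is in fact conical), again contradicting $d(z_k,\Gamma o)\to\infty$. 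These two geometric dichotomies — that a horocyclic limit point failing to be conical is bounded parabolic, and the dual statement that the cuspidal part of $Y$ near a bounded parabolic point is coarsely the orbit of the parabolic stabilizer — are the tree counterparts of Bowditch's analysis of geometrical finiteness for convergence actions, and executing them carefully in the possibly non-cocompact tree setting (controlling how a geodesic toward $\omega$ can wander off yet still converge to $\omega$) is where the real difficulty lies.
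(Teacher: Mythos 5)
The paper itself does not prove this proposition; it is quoted verbatim from Paulin (\cite{P}, Theorem 1.1), so your argument can only be judged on its own terms. Your cycle $(c)\Rightarrow(a)\Rightarrow(b)\Rightarrow(c)$ is a sensible architecture, $(a)\Rightarrow(b)$ is indeed immediate, and the lifting argument for $(c)\Rightarrow(a)$ is the standard one (though the claim that cuspidality plus finiteness of the core ``lets one verify'' that $\Gamma_{\omega_i}$ acts cocompactly on $\Lambda_\Gamma\setminus\{\omega_i\}$ is itself a step that needs a real argument, not a parenthesis).

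The genuine gap is in $(b)\Rightarrow(c)$, precisely where you locate the weight of the proof, and the sketch you give does not close it. First, the contradiction scheme fails as stated: from $z_k\in Y$ with $d(z_k,\Gamma o)\to\infty$ you only get convergence of the segments $[o,z_k]$ to the ray $[o,\omega)$ on initial pieces, so even if every point of $[o,\omega)$ were uniformly close to $\Gamma o$ (or, in the parabolic case, to $\Gamma_\omega\cdot o$), this says nothing about $z_k$ itself, which may lie far from the limiting ray; no contradiction with $d(z_k,\Gamma o)\to\infty$ follows. The argument must control $z_k$ directly --- e.g.\ using that $z_k\in C\Lambda_\Gamma$ lies on a geodesic with both endpoints in $\Lambda_\Gamma$ and analyzing a limit point visible from $z_k$ --- which is exactly the part you leave unexecuted. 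Second, in the horocyclic case you assert that $\Gamma o$ ``shadows $[o,\omega)$ arbitrarily far out, so $\omega$ is in fact conical.'' Horocyclic does not imply conical: $\beta_\omega(\gamma_n o,o)\to\infty$ allows the orbit to penetrate every horoball while staying arbitrarily far from the ray, and the fact that under hypothesis $(b)$ the non-parabolic limit points turn out to be conical is essentially the content of the theorem, so invoking it at this point is circular. Finally, the equivariant choice of pairwise disjoint horoballs at all bounded parabolic points presupposes precise invariance and, in effect, finiteness of the number of $\Gamma$-orbits of parabolic points, both of which are conclusions requiring proof rather than free constructions. As it stands, the heart of $(b)\Rightarrow(c)$ --- the compactness of $\Gamma\backslash Y$ --- remains unproved.
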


\color{black}
If the above equivalent conditions hold, then we say the subgroup $\Gamma$ is \emph{geometrically finite}. If, furthermore, $\Gamma\backslash\backslash\mathcal{T}_{\textrm{min}}$ is a finite graph of finite groups, then we say $\Gamma$ is \emph{convex cocompact}. Note that there is a lattice $\Gamma<\textrm{Aut}(\mathcal{T})$ which is \emph{not} geometrically finite. We also remark here that a discrete subgroup $\Gamma$ of Aut$(\mathbb{H}^n)$ is geometrically finite if one of the equivalent conditions (a) and (b) in the above proposition holds, or equivalently, the unit neighborhood of the convex core $\mathcal{C}_\Gamma=\Gamma\backslash C\Lambda_\Gamma$ has finite volume.

A \emph{funnel} $F$ is a subtree of $\mathcal{T}$ with exactly one vertex of degree 1 such that $\mathcal{T}-F$ is connected. Imitating the proof for hyperbolic plane case in \cite{Bo} and using (c) of the above theorem, we have the following proposition about the structure of quotient graph of groups $\Gamma\backslash\backslash\mathcal{T}$ when $\Gamma$ is geometrically finite.

\begin{prop}\label{split} If $\Gamma$ is geometrically finite, then there are a finite graph of groups $D$, finite $\Gamma$-cuspidal rays $C_1,\cdots,C_k$ and finite funnels $F_1,\cdots,F_l$ so that 
\begin{enumerate}[(1)]\setlength\itemsep{-\parsep} 
\item $VA=VD\cup VC_1\cup\cdots\cup VC_k\cup VF_1\cup\cdots\cup VF_l.$
\item $|VD\cap VC_j|=|VD\cap VF_m|=1$ and $VC_j\cap VF_m=\phi$.
\item If $a_{j,0}\in VD\cap VC_j$ and $e_{j,1}\in EC_j$ with $\partial_1e_{j,1}=a_{j,0}$, then $i(e_{j,1})=1$. 
\end{enumerate}
\end{prop}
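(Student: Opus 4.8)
The plan is to start from condition (c) of Proposition \ref{Paulin}: since $\Gamma$ is geometrically finite, $\Gamma\backslash\backslash\mathcal{T}_{\min}$ is a union of a finite graph of finite groups, call it $D_0$, together with finitely many $\Gamma$-cuspidal rays of groups $C_1,\dots,C_k$ attached to $D_0$. The difference between $\mathcal{T}_{\min}$ and $\mathcal{T}$ is exactly accounted for by the complementary subtrees hanging off $\mathcal{T}_{\min}$: since $\mathcal{T}_{\min}$ is the smallest $\Gamma$-invariant subtree, every vertex of $\mathcal{T}\setminus\mathcal{T}_{\min}$ lies in a connected component of $\mathcal{T}\setminus\mathcal{T}_{\min}$ whose closure meets $\mathcal{T}_{\min}$ in a single vertex, and no nontrivial element of $\Gamma$ can move such a component into $\mathcal{T}_{\min}$. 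So first I would argue that $\Gamma$ acts on the set of these hanging components, and on the quotient $A = \Gamma\backslash\mathcal{T}$ each $\Gamma$-orbit of components contributes a subtree glued to $D = \Gamma\backslash\backslash\mathcal{T}_{\min}$ (with its cuspidal rays) along one vertex. Because $D$ is the quotient of $\mathcal{T}_{\min}$ which carries the whole limit set, these hanging subtrees have empty intersection with $\Lambda_\Gamma$, hence (being trees with one boundary vertex, topologically) they are funnels $F_1,\dots,F_l$; finiteness of $l$ follows because $D$ is a finite graph, so there are only finitely many vertices of $A$ from which a funnel can emanate, and the local finiteness of $\mathcal{T}$ bounds the number of orbits at each such vertex.

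Next I would verify properties (1)–(3) directly from this description. Property (1): every vertex of $A=\Gamma\backslash\mathcal{T}$ lies in $\mathcal{T}_{\min}$ (contributing to $VD$, where here I absorb the cuspidal rays $C_j$ as part of the decomposition) or in exactly one hanging funnel, giving the disjoint-type decomposition $VA = VD\cup VC_1\cup\cdots\cup VC_k\cup VF_1\cup\cdots\cup VF_l$. Property (2): each cuspidal ray and each funnel is attached to $D$ along the single vertex where it meets $\mathcal{T}_{\min}$ (resp. the finite core), so $|VD\cap VC_j| = |VD\cap VF_m| = 1$; and a cuspidal ray and a funnel cannot share a vertex since one is contained in $\mathcal{T}_{\min}$ and the other is disjoint from $\mathcal{T}_{\min}$, giving $VC_j\cap VF_m=\emptyset$. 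Property (3) is the one genuinely using the edge-index structure: if $a_{j,0}\in VD\cap VC_j$ and $e_{j,1}\in EC_j$ is the first edge of the ray with $\partial_1 e_{j,1}=a_{j,0}$, I need $i(e_{j,1})=1$, i.e. $[\mathcal{A}_{a_{j,0}} : \alpha_{e_{j,1}}\mathcal{A}_{e_{j,1}}]=1$. This should follow from the defining property of a $\Gamma$-\emph{cuspidal} ray (Definition \ref{cuspidal}): along a ray $x_0,x_1,x_2,\dots$ toward the parabolic point $\omega$, the vertex stabilizers satisfy $\Gamma_{x_n}\le\Gamma_\omega$ for large $n$, and in fact along the whole cuspidal ray of groups the index on edges pointing \emph{toward} the finite core is $1$ — this is precisely what makes the ray "cuspidal" rather than a general ray of groups, and is exactly the normalization in the $(2,q-1)$ examples. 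I would lift $e_{j,1}$ to an edge $\widetilde{e}$ in $\mathcal{T}$ incident to a lift $\widetilde{a}_{j,0}$, and show the edge stabilizer $\Gamma_{\widetilde{e}}$ equals the vertex stabilizer $\Gamma_{\widetilde{a}_{j,0}}$, because all of the other edges at $\widetilde{a}_{j,0}$ (those leading away from the cusp) are already filled out by a $\Gamma$-orbit, so the vertex group acts trivially on the remaining direction.

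The main obstacle I anticipate is making the "funnel" claim precise and rigorous: I need to show that a component of $\mathcal{T}\setminus\mathcal{T}_{\min}$, after passing to the quotient by its $\Gamma$-stabilizer, really is a finite subtree with a single degree-one vertex and connected complement — i.e., that the quotient does not itself develop new infinite ends or extra boundary vertices. This requires knowing that the stabilizer of such a component acts with finite quotient (equivalently, that the corresponding graph of groups is finite), which is where I would invoke geometric finiteness again: any infinite end of such a quotient would produce a limit point of $\Gamma$ outside $\Lambda_\Gamma\cap\partial_\infty\mathcal{T}_{\min}$, contradicting that $\Lambda_\Gamma$ is carried by $\mathcal{T}_{\min}$. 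The bookkeeping—tracking how many orbits of components hang off each core vertex, and checking it is finite—is the routine part, imitating Bowditch's argument in \cite{Bo} for the hyperbolic surface case as the statement indicates.
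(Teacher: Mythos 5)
Your overall route is the one the paper intends (the paper gives no detailed argument, only the instruction to combine Proposition \ref{Paulin}(c) with Borthwick's surface decomposition), and your treatment of the cuspidal part and of item (3) is essentially right in spirit, but the justification you give for (3) is not the correct mechanism. What is needed is a truncation step: enlarge $D$ by absorbing a finite initial segment of each cuspidal ray so that the attaching vertex $a_{j,0}$ has a lift $\widetilde a$ with $\Gamma_{\widetilde a}\le\Gamma_\omega$ (this is exactly what Definition \ref{cuspidal} provides for all sufficiently large $n$, and the absorbed segment is still a finite graph of finite groups). Then every element of $\Gamma_{\widetilde a}$ fixes the geodesic ray from $\widetilde a$ to $\omega$, hence fixes its first edge $\widetilde e$, which is a lift of $e_{j,1}$, so $i(e_{j,1})=[\Gamma_{\widetilde a}:\Gamma_{\widetilde e}]=1$. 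Your stated reason (``the other edges at $\widetilde a$ are already filled out by a $\Gamma$-orbit, so the vertex group acts trivially on the remaining direction'') is not a proof, and without the truncation the conclusion is simply false: for the Nagao lattice $PGL_2(\mathbb{F}_q[t])$ acting on the $(q+1)$-regular tree, the vertex group at the bottom vertex of the quotient ray permutes all $q+1$ edges transitively, so the corresponding index is $q+1$, not $1$.

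The genuine gap is in your proof of (1)--(2). You assert that the $\Gamma$-orbits of components of $\mathcal{T}\setminus\mathcal{T}_{\mathrm{min}}$ attach only to the finite core $D$ and are finite in number, ``because $D$ carries the limit set'' and ``there are only finitely many vertices of $A$ from which a funnel can emanate''; this presupposes the conclusion. A component of $\mathcal{T}\setminus\mathcal{T}_{\mathrm{min}}$ attaches at a vertex of $\mathcal{T}_{\mathrm{min}}$, and such vertices may well project to the cuspidal rays, whose vertices lie in infinitely many pairwise inequivalent $\Gamma$-orbits; bounded parabolicity only constrains the $\Gamma_\omega$-action on $\Lambda_\Gamma\setminus\{\omega\}$ and does not by itself prevent an edge exiting $\mathcal{T}_{\mathrm{min}}$ at $x_n$ for every large $n$ (one can even attach branches to $\mathcal{T}_{\mathrm{min}}$ $\Gamma$-equivariantly without changing $\Lambda_\Gamma$ or geometric finiteness), which would produce infinitely many funnels meeting the rays rather than $D$, contradicting both the finiteness of $l$ and $VC_j\cap VF_m=\emptyset$. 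Supplying the argument (or the implicit hypothesis on $\Gamma\backslash\mathcal{T}$ near the cusps) that rules this out is exactly the nontrivial content of (1)--(2), and your proposal does not address it. Relatedly, your ``main obstacle'' paragraph is aimed at the wrong target: a funnel in the quotient is in general an infinite subtree whose ends are precisely non-limit ends, so an infinite end of the quotient of a hanging component produces no limit point and no contradiction, and there is no reason (nor need) for that quotient to be a finite subtree; the only thing to check there is that the component's stabilizer is a finite group fixing the attaching vertex, so that the quotient is again a tree glued to $A$ along one vertex.
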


Let $L_\Gamma$ be the subgroup of $\mathbb{Z}$ generated by the translation length $l(\gamma)=\min_{v\in V\mathcal{T}}\{d(v,\gamma\cdot v)\}$ of every element $\gamma\in\Gamma$. We call $L_\Gamma$ by the \emph{length spectrum} of $\Gamma$. If $\mathcal{T}$ has no proper non-empty $\Gamma$-invariant subtree and has no vertices of degree 2, then either $L_\Gamma=\mathbb{Z}$ or $L_\Gamma=2\mathbb{Z}$ (\cite{BP}). If $L_\Gamma=\mathbb{Z}$, then we say the length spectrum of $\Gamma$ is \emph{not arithmetic}. This property is required to prove that the geodesic translation map is mixing (see Theorem \hyperref[Roblin]{2.3}).

\subsection{Patterson-Sullivan theory with potential $\widetilde{F}$ for groups acting on trees}\label{density}

The theory of \emph{Patterson-Sullivan densities} was developed by Patterson (for $\mathbb{H}^2$) and Sullivan (for $\mathbb{H}^n,n\ge 3)$. It was generalized to groups acting on CAT$(-1)$ spaces by Burger-Mozes (\cite{BM}). 
See also \cite{CP} for the case of universal covers of finite simplicial graphs.
We review in this subsection the Patterson-Sullivan theory together with a non-zero potential $\widetilde{F}$, following \cite{PPS} and \cite{BPP}. 

A \emph{potential} $\widetilde{F}$ for a discrete group $\Gamma<\textrm{Aut}(\mathcal{T})$ is a $\Gamma$-invariant continuous function $\widetilde{F}\colon E\mathcal{T}\to\mathbb{R}$. We denote by $F$ the induced function $F\colon \Gamma\backslash E\mathcal{T}\to\mathbb{R}$ on the quotient. For all $x,y\in V\mathcal{T}$ with $d(x,y)=n$, we define $$\int_x^y\widetilde{F}=\sum_{i=0}^{n-1}\widetilde{F}(e_i)$$
where $\xi\in S\mathcal{T}$ is any bi-infinite geodesic with $\xi(0)=x$ and $\xi(n)=y$ and $e_i$ is the edge satisfying $\partial_0e_i=\xi(i)$ and $\partial_1e_i=\xi(i+1)$. This definition does not depend on the choice of $\xi\in S\mathcal{T}$.

\begin{remark} In general, a potential is a function defined on the unit tangent bundle $T^1\widetilde{M}$ of a negatively curved manifold $\widetilde{M}$ or the space of germs $T^1X$ of a CAT$(-1)$ space $X$. For simplicial trees $\mathcal{T}$, the unit tangent vector bundle (or the space of germs) is defined as the quotient space $S\mathcal{T}/\sim$ where $\xi\sim \xi'$ if and only if $\xi(0)=\xi'(0)$ and $\xi(1)=\xi'(1)$. Thus, we can canonically identify $S\mathcal{T}/\sim$ with $E\mathcal{T}$.
\end{remark}

The \emph{critical exponent} of $(\Gamma,F)$ is the element $\delta_{\Gamma,F}\in [-\infty,+\infty]$ defined by
$$\delta_{\Gamma,F}=\underset{n\to +\infty}{\limsup}\frac{1}{n}\log\sum_{\gamma\in\Gamma, d(x,\gamma y)=n}e^{\int_x^{\gamma y}\widetilde{F}}$$
and the \emph{Poincar\'e series} of $(\Gamma,F)$ is the map $Q=Q_{\Gamma,F,x,y}\colon \mathbb{R}\to [0,\infty]$ given by
$$Q\colon s\mapsto \sum_{\gamma\in \Gamma}e^{\int_x^{\gamma y} (\widetilde{F}-s)}.$$
If $F=0$, then $\delta_{\Gamma,0}$ and $Q_{\Gamma,0}$ are the usual critical exponent $\delta_\Gamma$ and Poincar\'e series $\sum_{\gamma\in\Gamma} e^{-sd(x,\gamma y)}$ of $\Gamma$, respectively.

Let $\widetilde{F}^+=\widetilde{F}$ and $\widetilde{F}^-=\widetilde{F}\circ\widetilde{\iota}$. We denote by $F^{\pm}\colon \Gamma\backslash E\mathcal{T}\to\mathbb{R}$ their induced maps. In the rest of paper, we assume that $\delta=\delta_{\Gamma,F^+}=\delta_{\Gamma,F^-}$ is finite. Generalizing the Busemann cocycle, the authors in \cite{PPS} defined the \emph{Gibbs cocycle} associated with the group $\Gamma$ and the potential $\widetilde{F}$ as the map $C_F\colon \partial_\infty\mathcal{T}\times\mathcal{T}\times\mathcal{T}\to\mathbb{R}$ given by
$$(\omega,x,y)\mapsto C_{F,\omega}(x,y)=\int_y^v \widetilde{F}-\int_x^v \widetilde{F}$$
for some (hence any) $v\in r_x\cap r_y$. We call $C_{F^-\delta}$ a \emph{normalized Gibbs cocycle} associated with $\Gamma$ and $\widetilde{F}$. 

\begin{defn}[Patterson density for $(\Gamma,\widetilde{F})$ (\cite{PPS})]
A \emph{Patterson density} of dimension $\alpha$ for the pair $(\Gamma,\widetilde{F})$ is a family of finite nonzero positive Borel measures $\{\mu_x\}_{x\in V\mathcal{T}}$ on $\partial_\infty\mathcal{T}$ such that for every $\gamma\in\Gamma$, for all $x,y\in\mathcal{T}$ and for every $\omega\in\partial_\infty\mathcal{T}$, we have 
$$\gamma_*\mu_x=\mu_{\gamma\cdot x}\qquad\textrm{and}\qquad \frac{d\mu_x}{d\mu_y}(\omega)=e^{-C_{F-\alpha,\omega}(x,y)}.$$
In particular, a $\delta_{\Gamma,F}$-dimensional Patterson density for $(\Gamma,\widetilde{F})$ is called \emph{normalized}.
\end{defn}

There is at least one normalized Patterson density for the pair $(\Gamma,\widetilde{F})$. Moreover, if $Q_{\Gamma,F}(\delta)=\infty$, then the normalized Patterson density is unique up to a multiplicative constant (\cite{PPS}, \cite{BPP}).


Now let $\{\nu_x^\pm\}$ be the \emph{normalized} Patterson densities for the pairs $(\Gamma,\widetilde{F}^\pm)$, respectively, and let $C^\pm$ be the \emph{normalized} Gibbs cocycle for $(\Gamma,\widetilde{F}^\pm)$. Fixing $o\in\cal{T}$, the map $\xi\mapsto (\xi^+,\xi^-,s)$ gives a homeomorphism between $S\cal{T}$ and $((\partial_\infty\mathcal{T}\times \partial_\infty\mathcal{T}) \backslash \Delta)\times \mathbb{Z}$, where $\xi(s)$ is the closest point to $o$ on the geodesic line $\xi$. Let us define the measure $\widetilde{m}_{F}^{\nu^-,\nu^+}$ on $S\cal{T}$ associated to $\{\nu_x^-\}$ and $\{\nu_x^+\}$ by
$$d\widetilde{m}_{F}^{\nu^-,\nu^+}(\xi)=e^{C^-_{\xi^-}(o,\xi_0)}e^{C^+_{\xi^+}(o,\xi_0)}d\nu_o^-(\xi^-)d\nu_o^+(\xi^+)ds,$$
where $ds$ is the counting measure on $\mathbb{Z}$.
It follows from the $\Gamma$-invariant conformal property of $\{\nu_x^\pm\}$ that the definition of $\widetilde{m}_F$ is independent of the choice of $o\in\cal{T}$ and that $\widetilde{m}_{F}^{\nu^-,\nu^+}$ is left $\Gamma$-invariant. Hence it induces a Radon measure $m_{\Gamma,F}^{\nu^-,\nu^+}$ of the quotient space $\Gamma\backslash S\mathcal{T}$. This definition is also invariant under the geodesic translation map $\phi_\Gamma$ on $\Gamma\backslash S\mathcal{T}$.
Now we may lift the measure $m_{\Gamma,F}^{\nu^-,\nu^+}$ to $\Gamma\backslash G$ for arbitrary closed subgroup $G$ of $\textrm{Aut}(\cal{T})$ via $M=\textrm{Stab}_G(\xi)$-invariant extension: for $\widetilde{f}\in C_c(\Gamma\backslash G)$, define $$m_{\Gamma,F}^{\nu^-,\nu^+}(\widetilde{f})=\int_{\eta\in\Gamma\backslash S\mathcal{T}}\int_{g\in M} \widetilde{f}(\eta g)dgdm_{\Gamma,F}^{\nu^-,\nu^+}.$$

When $F$ is constant, the normalized Patterson density is equal to the usual $\delta_{\Gamma}$-dimensional \emph{Patterson-Sullivan} density and the measure $m_{\Gamma,F}^{\nu^-,\nu^+}$ coincides with the \emph{Bowen-Margulis} measure $m_\Gamma^{\textrm{BM}}$ given by 
$$dm_\Gamma^{\textrm{BM}}=e^{\delta_\Gamma\beta_{\xi^+}(o,\xi_0)}e^{\delta_\Gamma \beta_{\xi^-}(o,\xi_0)}d\nu_o(\xi^+)d\nu_o(\xi^-)ds.$$ Recall that the length spectrum of $\Gamma$ is said to be not arithmetic when $L_\Gamma=\mathbb{Z}$ for $\Gamma<\textrm{Aut}(\mathcal{T})$. The following mixing property of the geodesic flow is due to \cite{ro} and \cite{BPP}.

\begin{thm} Let $\Gamma<\textrm{Aut}(\mathcal{T})$ be a discrete subgroup whose length spectrum is not arithmetic. Let $\widetilde{F}$ be a given potential for $\Gamma$ and let $\nu^\pm$ be the normalized Patterson densities for $(\Gamma,\widetilde{F}^\pm)$. Suppose that $|m_{\Gamma,F}^{\nu^-,\nu^+}|<\infty$. Then, for all compactly supported continuous functions $f,g\colon \Gamma\backslash S\mathcal{T}\to\mathbb{R}$ we have
$$\int_{\Gamma\backslash S\mathcal{T}} f\cdot(g\circ \phi_\Gamma^{\circ n}) dm_{\Gamma,F}^{\nu^-,\,\nu^+}\longrightarrow \frac{1}{|m_{\Gamma,F}^{\nu^-,\nu^+}|}\int_{\Gamma\backslash S\mathcal{T}} f dm_{\Gamma,F}^{\nu^-,\nu^+}\int_{\Gamma\backslash S\mathcal{T}} g dm_{\Gamma,F}^{\nu^-,\nu^+}.$$
\end{thm}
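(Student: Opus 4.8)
The plan is to reduce the statement to Babillot's dynamical mixing criterion, adapted to the discrete-time tree setting, and then to remove the arithmetic obstruction using the hypothesis $L_\Gamma=\mathbb{Z}$. Throughout I work on $S\mathcal{T}$ in the Hopf coordinates $\xi\leftrightarrow(\xi^-,\xi^+,s)$, in which $\widetilde m_F^{\nu^-,\nu^+}$ is the genuine product $e^{C^-_{\xi^-}(o,\xi_0)}e^{C^+_{\xi^+}(o,\xi_0)}\,d\nu_o^-(\xi^-)\,d\nu_o^+(\xi^+)\,ds$ and $\widetilde\phi$ shifts the $\mathbb{Z}$-coordinate $s\mapsto s-1$ while fixing the endpoint pair $(\xi^-,\xi^+)$. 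The conformality of the normalized densities $\nu^\pm$ under the Gibbs cocycle $C^\pm$ is exactly what makes this a local product with respect to the expanding and contracting horospherical partitions, so that the conditional measures along the two boundary factors are well defined. This product structure is the geometric input on which the whole argument rests.

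First I would record the two standing facts that the argument needs. By a density argument it suffices to prove the convergence for $f,g$ in a dense subspace of $L^2(\Gamma\backslash S\mathcal{T},m_{\Gamma,F}^{\nu^-,\nu^+})$, since $|m_{\Gamma,F}^{\nu^-,\nu^+}|<\infty$ and the Koopman operator $U_\phi h=h\circ\phi_\Gamma$ is unitary; I would take the locally constant functions that depend only on the combinatorics of $\xi$ in a bounded neighbourhood of $o$, which are dense and adapted to the product coordinates. Second, $\phi_\Gamma$ is conservative and ergodic for $m_{\Gamma,F}^{\nu^-,\nu^+}$: this is the Hopf--Tsuji--Sullivan dichotomy for the pair $(\Gamma,\widetilde F)$, where the normalized Patterson density has dimension $\delta=\delta_{\Gamma,F}$ and the finiteness $|m_{\Gamma,F}^{\nu^-,\nu^+}|<\infty$ places us on the conservative--ergodic side, as established in \cite{PPS} and \cite{BPP} in exactly this Gibbs-cocycle framework.

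With ergodicity in hand I would run the argument by contradiction. Suppose the correlations do not converge to the product. Then there is a sequence $n_k\to\infty$ and locally constant $f,g$ witnessing the failure, and the self-joinings $(\mathrm{id},\phi_\Gamma^{n_k})_*m_{\Gamma,F}^{\nu^-,\nu^+}$ on $(\Gamma\backslash S\mathcal{T})^2$ admit a weak-$*$ limit $\widehat m$ that is $(\phi_\Gamma\times\phi_\Gamma)$-invariant, projects to $m_{\Gamma,F}^{\nu^-,\nu^+}$ on each factor, and is not the independent product. The core of the argument is to exploit the local product to show that the conditional measures of $\widehat m$ along the stable and unstable horospherical factors are invariant under the shift $\phi_\Gamma^{\,m}$ for $m$ ranging over a set $H\subseteq\mathbb{Z}$, and that $H$ is a subgroup: the relevant shifts arise as limit values of the displacement cocycle $s\mapsto s-n_k$ along returning orbits, and such limit values are stable under negation and addition. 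If $H=\mathbb{Z}$ then $\widehat m$ is invariant under all powers of $\phi_\Gamma$, so by ergodicity it is the product, a contradiction; hence $H=c\mathbb{Z}$ with $c\ge 2$.

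The final step eliminates the proper subgroup, and this is the only place the hypothesis $L_\Gamma=\mathbb{Z}$ enters. I would show $H\supseteq L_\Gamma$: given a loxodromic $\gamma\in\Gamma$ of translation length $\ell=l(\gamma)$, its axis projects to a periodic $\phi_\Gamma$-orbit of period $\ell$; by the shadow lemma the product measure assigns positive mass to small boxes straddling this orbit, and the return of the contracting and expanding horospheres along the axis forces $\widehat m$ to be invariant under $\phi_\Gamma^{\,\ell}$, i.e. $\ell\in H$. Since such lengths generate $L_\Gamma$ and $L_\Gamma=\mathbb{Z}$ by non-arithmeticity, we get $H=\mathbb{Z}$, contradicting $c\ge 2$. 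Thus mixing holds, and tracking the normalization produces the claimed limit $\tfrac{1}{|m_{\Gamma,F}^{\nu^-,\nu^+}|}\int f\int g$. I expect the main obstacle to be this middle-to-final stretch of the contradiction: rigorously producing the limiting joining $\widehat m$ and proving both that its obstruction set $H$ is genuinely a subgroup and that it contains every translation length, since this is precisely where the local-product geometry, the conformality of $\nu^\pm$ under the Gibbs cocycle, and the finiteness of the measure must all be combined. Everything downstream of $H\supseteq L_\Gamma$ is a soft consequence of non-arithmeticity.
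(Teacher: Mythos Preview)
The paper does not prove this statement at all: it is stated as a theorem and attributed to \cite{ro} and \cite{BPP} with no argument given. So there is no ``paper's own proof'' to compare against; the result is imported from the literature as a black box.

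Your proposal is a faithful outline of the Babillot joining argument, which is indeed the method underlying the cited references (Roblin's m\'emoire carries out exactly this scheme in the constant-potential CAT$(-1)$ setting, and the Gibbs-cocycle generalisation in \cite{PPS}, \cite{BPP} follows the same template). The ingredients you list---ergodicity from the Hopf--Tsuji--Sullivan dichotomy for $(\Gamma,\widetilde F)$, the local product structure in Hopf coordinates, the weak-$*$ limit joining $\widehat m$, the subgroup $H\subset\mathbb{Z}$ of ``allowed'' shifts, and the identification $H\supseteq L_\Gamma$ via periodic orbits---are the correct ones, and your honest flagging of the technical core (that $H$ is a subgroup and contains every translation length) is accurate. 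One small correction: the geodesic shift acts as $s\mapsto s+1$ in the paper's convention, not $s\mapsto s-1$, though this is cosmetic. If you were to write this up in full you would need to be careful that the joining argument, originally phrased for flows, transfers cleanly to the $\mathbb{Z}$-action; this is handled in \cite{BP} and \cite{BPP} and is where the discreteness of the length spectrum (as a subgroup of $\mathbb{Z}$ rather than a subset of $\mathbb{R}$) actually simplifies matters.
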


If $L_\Gamma=k\mathbb{Z}$, then let $S_o^k \mathcal{T}$ be the subset $\{\xi\in S\mathcal{T}\,|\,d(\xi(0),o)\in k\mathbb{Z}\}$ of $S\mathcal{T}$. This is invariant under $\Gamma$ and $\phi^{\circ k}$, so we may restrict $m_{\Gamma,F}^{\nu^-,\nu^+}$ to $\Gamma\backslash S_{o}^k\mathcal{T}$. In this case, the dynamical system $(\Gamma\backslash S_o^k\mathcal{T},\phi^{\circ k},m_{\Gamma,F}^{\nu^-,\nu^+})$ is mixing, for any choice of $o\in V\mathcal{T}$ (cf. \cite{BP}). 

\subsection{Bruhat-Tits tree of rank one algebraic groups}
Let $\mathbb{F}$ be a non-Archimedean local field with the valuation $v\colon\mathbb{F}\to\mathbb{Z}$. If $G=\underline{G}(\mathbb{F})$ is an $\mathbb{F}$-rational points of an arbitrary $\mathbb{F}$-rank one algebraic group over $\mathbb{F}$ whose residue field is $\mathbb{F}_q$, then the Bruhat-Tits tree $\cal{T}$ of $G$ is a $(q^d+1,q^{d'}+1)$-biregular tree where $d$ and $d'$ are integers attached to the two vertices of the relative local Dynkin diagram for $H$. Indeed, if $B$ is the minimal parabolic subgroup of $G$, and $P_1$ and $P_2$ are the two maximal proper parabolic subgroups of $G$, then $q^d+1=[P_1\colon B]$ and $q^{d'}+1=[P_2\colon B]$. By the classical result of the classification, up to strict isogeny we have 9 absolutely simple rank one algebraic groups. If $G$ is not simply connected, then $G$ may act with inversions. In this case, we can replace $G$ by a subgroup of index 2 which acts without inversions (see \cite{Tits}). From now on, we assume that $G$ acts on $\cal{T}$ without inversions; hence, $d_{\mathcal{T}}(x,g\cdot x)$ is always even for every $g\in G$ and every vertex $x$ of $\cal{T}$.

We now introduce some subgroups of $G$. Let $N^+$ and $N^-$ be the set of elements of $G$ which stabilize the contracting horosphere $\mathcal{H}_\xi^+(0)$ and expanding horosphere $\mathcal{H}_\xi^-(0)$, respectively. We can also choose a maximal $\mathbb{F}$-split torus $S$ of $\underline{G}$ which fixes $\xi^+$ and $\xi^-$ in $\partial_\infty\mathcal{T}$ pointwise. Let $Z=Z_{\underline{G}}(S)_{\mathbb{F}}$ be the centralizer of $S$ in $\underline{G}$. For each $j\in\mathbb{Z}$, there is a unique element $z$ in $Z$ such that $z\cdot H_\xi^+(0)=H_\xi^+(2j)$; we denote such $z$ by $a_{2j}$ (see \cite{Tits}). It follows that $Z=\{a_{2j}\,\colon j\in\mathbb{Z}\}$. Define $Z^+=\{a_{2j}\,\colon j\ge 0\}$.
Finally, let $P^+=ZN^+$ and $P^-=ZN^-$.

\color{black}


\section{Changing the dynamical system of geodesic translation map into Markov chain}\label{three}

In this section, we introduce the Markov chain $(\mathcal{S},p_{ij},\pi_j)$ with countable states which is associated to the dynamical system $(\Gamma\backslash S\mathcal{T},\phi,m_{\Gamma,F}^{\nu^-,\nu^+})$. We follow the idea appeared in \cite{BM} and take some notions there with a slight modification. The novelty of this section is that we interpret the exponential mixing properties of the dynamical system $(\Gamma\backslash S\mathcal{T},\phi,m_{\Gamma,F}^{\nu^-,\nu^+})$ in terms of a countable Markov chain.

\subsection{Countable Markov shift $X_{(A,i)}$}
Let $\mathcal{T}$ be a locally finite tree and $S\mathcal{T}$ be the set of all bi-infinite geodesics in $\mathcal{T}$. Let $\widetilde{\phi}\colon S\mathcal{T}\to S\mathcal{T}$ be the \emph{forward geodesic translation} given by $(\widetilde{\phi}\xi)(t)=\xi(t+1)$ and $\widetilde{\iota}\colon S\mathcal{T}\to S\mathcal{T}$ be the \emph{geodesic inversion} given by $(\widetilde{\iota}\xi)(t)=\xi(-t+1)$. The group $\mathbb{Z}$ acts on $S\mathcal{T}$ by $n\mapsto \widetilde{\phi}^{\circ n}$. Then $\widetilde{\phi}$ and $\widetilde{\iota}$ commute with the action of $\psi\in \textrm{Aut}(\mathcal{T})$.

Let $\widetilde{\pi}\colon S\mathcal{T}\to \Gamma\backslash S\mathcal{T}$ be the natural projection map and let $\phi\colon\Gamma\backslash S\mathcal{T}\to\Gamma\backslash S\mathcal{T}$ be the induced forward geodesic translation given by $\phi\circ\widetilde{\pi}=\widetilde{\pi}\circ\widetilde{\phi}$ and let $\iota\colon\Gamma\backslash S\mathcal{T}\to \Gamma\backslash S\mathcal{T}$ be the induced geodesic inversion defined similarly. We will assume that $\Gamma$ is a full discrete subgroup of $\textrm{Aut}(\mathcal{T})$, i.e., a discrete subgroup of $\textrm{Aut}(\mathcal{T})$ which is equal to its associated full subgroup $\Gamma_f=\{g\in\textrm{Aut}(\mathcal{T})\,|\,\pi\circ g=\pi\}$ where $\pi\colon\mathcal{T}\to \Gamma\backslash\mathcal{T}$ is the canonical projection (see Definition \ref{full}).

For a given edge indexed graph $(A,i)$, consider the following subset $$X_{(A,i)}=\{x=(e_j)_{j\in\mathbb{Z}}\,|\,\partial_0e_{j+1}=\partial_1e_j\textrm{ and if } e_{j+1}=\overline{e_j},\textrm{ then }i^A(e_j)>1\}$$ of $(EA)^{\mathbb{Z}}$. The family of \emph{cylinders} 
$$[e_0,\cdots,e_{n-1}]:=\{x\in X_{(A,i)}\colon x_i=e_i,i=0,\cdots,n-1\}$$ is a basis of open sets for a topology on $X$. Let $\sigma\colon X_{(A,i)}\to X_{(A,i)}$ be given by $\sigma(x)_i:=x_{i+1}$. Then $(S\mathcal{T},\widetilde{\phi})$ is conjugate to $(X_\mathcal{T},\sigma)$ (Consider $\mathcal{T}$ as $(\mathcal{T},i^0)$ with $i^0(e)=1,\forall e\in E\mathcal{T}$). If we denote by $(A,i)$ the edge-indexed graph associated with $\Gamma\backslash\backslash\mathcal{T}$, then we also have a bijection $\Phi$ between two spaces $(\Gamma\backslash S\mathcal{T},\phi)$ and $( X_{(A,i)},\sigma)$ so that the following diagram commute.
\begin{equation}
\begin{aligned}\label{1}
\xymatrix{ \Gamma\backslash S\mathcal{T} \ar[r]^{\quad\Phi\quad} \ar[d]_{\phi} & X_{(A,i)} \ar[d]_{\sigma} \\ 
\Gamma\backslash S\mathcal{T} \ar[r]^{\quad\Phi\quad} & X_{(A,i)}}
\end{aligned}
\end{equation}
Indeed, let $\pi\colon \mathcal{T}\to\Gamma\backslash\mathcal{T}$ be the canonical projection and let $\xi\colon\mathbb{Z}\to\mathcal{T}$ be an element of $S\mathcal{T}$. Let $\xi_j$ be the edge satisfying $\partial_0\xi_j=\xi(j)$ and $\partial_1\xi_j=\xi(j+1)$. Define $\widetilde{\Phi}(\xi)=(\pi(\xi_j))_{j\in\mathbb{Z}}$. Then $\widetilde{\Phi}\colon S\mathcal{T}\to X_{(A,i)}$ is $\Gamma$-invariant, so it induces the map $\Phi\colon\Gamma\backslash S\mathcal{T}\to X_{(A,i)}$. Given $e_1,e_2\in E\mathcal{T}$ satisfying $\partial_0e_1=\partial_0e_2$ and $\pi(e_1)=\pi(e_2)$, there is $\gamma\in\textrm{Aut}(\mathcal{T})$ such that $\gamma e_1=e_2,\pi\circ\gamma=\pi$, and $\gamma$ acts trivially on the connected component of $\mathcal{T}-\{e_1,e_2\}$ containing $\partial_0e_1$. Therefore, $\Phi$ is a $\phi$-equivariant homeomorphism (cf. \cite{BM}).

\subsection{Invariant measures on $(X_{(A,i)},\sigma)$}
\label{measure}
Let $\textbf{A}=(A,\mathcal{A})$ be a graph of groups and $(A,i)$ be the corresponding edge-indexed graph. In this subsection, we will get a \emph{Markov measure} $\lambda_F$ on the dynamical system $(X_{(A,i)},\sigma)$ corresponding to $m_{\Gamma,F}^{\nu^-,\nu^+}$. In other words, we require
\begin{equation}
\begin{aligned}\label{1.5}
\sum_{e_j\colon\partial_0e_k=\partial_1e_j}\lambda_F([e_j,e_k]) &=\lambda_F([e_k]), \\
\sum_{e_k\colon\partial_0e_k=\partial_1ej}\lambda_F([e_j,e_k]) &=\lambda_F([e_j])
\end{aligned}
\end{equation} 
and
\begin{align}\label{2}
\int_{\Gamma\backslash S\mathcal{T}}f \,dm_{\Gamma,F}^{\nu^-,\nu^+}= \int_{X_{(A,i)}}\Phi^*(f)\,d\lambda_F
\end{align}
where $\Phi^*(f)[(x_i)_{i\in\mathbb{Z}}]=f[\Phi^{-1}((x_i)_{i\in\mathbb{Z}})].$

Fixing a basepoint $a_0\in VA$, the universal covering $\mathcal{T}\simeq\widetilde{(\textbf{A},a_0)}$ is a locally finite tree. Let $\pi\colon\mathcal{T}\to\Gamma\backslash\mathcal{T}$ be the natural projection. We define the invariant measure $\lambda_F$ on $(X_{(A,i)},\sigma)$ as follows. For $e\in E\mathcal{T}$, we denote by $$\mathcal{O}(e)=\{\omega\in\partial_\infty\mathcal{T}\,|\, \exists \xi\in S\mathcal{T}\textrm{ such that }\xi(0)=\partial_0e,\xi(1)=\partial_1e\textrm{ and }\xi^+=\omega\},$$ the \emph{shadow} of an edge $e$. For each cylinder $[e_0,\cdots,e_{n-1}]$, let $\xi_0,\cdots,\xi_{n-1}\in E\mathcal{T}$ be the edges of a path in $\mathcal{T}$ satisfying $\pi(\xi_j)=e_j$ and let $$\Gamma_{\xi_0,\cdots,\xi_{n-1}}:=\textrm{Stab}_{\Gamma}(\xi_0)\cap\cdots\cap\textrm{Stab}_\Gamma(\xi_{n-1}).$$ 
Let $\nu^\pm$ be the normalized Patterson density for $(\Gamma,\widetilde{F}^\pm)$ constructed in Subsection \ref{density}. Now we define 
$$\lambda_F([e_0,\cdots,e_{n-1}])=\frac{\nu^-_{\partial_0\xi_0}(\mathcal{O}(\overline{\xi_0}))\nu^+_{\partial_1\xi_{n-1}}(\mathcal{O}(\xi_{n-1}))}{|\Gamma_{\xi_0,\cdots,\xi_{n-1}}|}e^{\int_{\partial_0\xi_0}^{\partial_1\xi_{n-1}}\widetilde{F}-\delta}.$$
Since $|\Gamma_{\xi_0,\cdots,\xi_{n-1}}|$ depends only on $e_0,\cdots,e_{n-1}$ and $\nu^\pm$ are $\Gamma$-invariant, the measure $\lambda_F$ is well-defined. Moreover, the equation (\ref{2}) holds since
\begin{align*}
& \lambda_F([e_0,\cdots,e_{n-1}])=\frac{\nu^-_{\partial_0\xi_0}(\mathcal{O}(\overline{\xi_0}))\nu^+_{\partial_1\xi_{n-1}}(\mathcal{O}(\xi_{n-1}))}{|\Gamma_{\xi_0,\cdots,\xi_{n-1}}|}e^{\int_{\partial_0\xi_0}^{\partial_1\xi_{n-1}}\widetilde{F}-\delta} \\
= \,& \frac{1}{|\Gamma_{\xi_0,\cdots,\xi_{n-1}}|} \int_{\xi^+\in\partial_\infty\mathcal{T}}\mathbb{1}_{\mathcal{O}(\xi_{n-1})} e^{-C_{\xi^+}(\partial_1\xi_{n-1},\partial_0\xi_0)} \int_{\xi^-\in\partial_\infty\mathcal{T}} \mathbb{1}_{\mathcal{O}(\overline{\xi_0})} d\nu^-_{\partial_0\xi_0}(\xi^-)d\nu^+_{\partial_1\xi_{n-1}}(\xi^+) \\
= \,& 
\int_{\Gamma\backslash S\mathcal{T}}\mathbb{1}_{\Phi^{-1}[e_0,\cdots,e_{n-1}]}d\nu^-_{\partial_0\xi_0}d\nu^+_{\partial_0\xi_0} =m_{\Gamma,F}^{\nu^-,\nu^+}(\Phi^{-1}[e_0,\cdots,e_{n-1}]).
\end{align*}
It remains us to prove the Markov property (\ref{1.5}) of $\lambda_F$.

\begin{prop}[Markov property]\label{3} Let us define $$p_{e_j,e_k}:= \frac{\lambda_F([e_j,e_k])}{\lambda_F([e_j])}.$$ Then, the measure $\lambda_F$ and $p_{e_j,e_k}$ satisfy the following:
$$\sum_{e_j\colon\partial_0e_k=\partial_1e_j}\lambda_F([e_j])p_{e_j,e_k}=\lambda_F([e_k])\quad\textrm{and}\quad \sum_{e_k\colon\partial_0e_k=\partial_1e_j}p_{e_j,e_k}=1.$$ 
\end{prop}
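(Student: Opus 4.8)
The plan is to prove the two identities by unwinding the explicit formula for $\lambda_F$ on cylinders and using the defining conformal properties of the Patterson densities $\nu^\pm$. The second identity — that the $p_{e_j,e_k}$ are genuine transition probabilities — is the more structural one and I would tackle it first, since the first identity then follows by a similar but more delicate shadow-decomposition argument.

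For the second identity, fix an edge $e_j$ and a lift $\xi_j\in E\mathcal{T}$ with $\pi(\xi_j)=e_j$. The edges $e_k$ with $\partial_0 e_k=\partial_1 e_j$ that are admissible in $X_{(A,i)}$ correspond (after choosing lifts $\xi_k$ continuing $\xi_j$, i.e. $\partial_0\xi_k=\partial_1\xi_j$) precisely to the oriented edges of $\mathcal{T}$ issuing from the vertex $\partial_1\xi_j$, with the edge $\overline{\xi_j}$ included only when $i^A(e_j)>1$. The key geometric observation is the disjoint decomposition of shadows
\begin{equation*}
\mathcal{O}(\xi_j)=\bigsqcup_{\xi_k:\ \partial_0\xi_k=\partial_1\xi_j,\ \xi_k\neq\overline{\xi_j}}\mathcal{O}(\xi_k),
\end{equation*}
together with the cocycle identity $C_{\xi^+}(\partial_1\xi_k,\partial_0\xi_j)=C_{\xi^+}(\partial_1\xi_k,\partial_1\xi_j)+C_{\xi^+}(\partial_1\xi_j,\partial_0\xi_j)$ for $\xi^+\in\mathcal{O}(\xi_k)$; on such $\xi^+$ the first term contributes the weight $e^{\int_{\partial_1\xi_j}^{\partial_1\xi_k}\widetilde F-\delta}$ and the second is exactly what already appears in $\lambda_F([e_j])$. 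Writing $\lambda_F([e_j,e_k])$ as an integral over $\xi^+\in\mathcal{O}(\xi_k)$ against $\nu^+$ (exactly as in the displayed computation of equation (\ref{2}) in the excerpt) and summing over the admissible $\xi_k$, the integrals glue along the shadow decomposition to the single integral over $\mathcal{O}(\xi_j)$, which is $\lambda_F([e_j])$. One must be careful about the orbit-counting factors $|\Gamma_{\xi_j,\xi_k}|$ versus $|\Gamma_{\xi_j}|$: since $\Gamma$ is \emph{full}, the stabilizer of an edge path equals the stabilizer of its final edge (an element of $\Gamma$ fixing $\xi_j$ but not $\xi_k$ would still fix $\pi$), so $|\Gamma_{\xi_j,\xi_k}|=|\Gamma_{\xi_j}|$ for all continuations $\xi_k$, and this common factor cancels. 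Dividing by $\lambda_F([e_j])$ gives $\sum_{e_k}p_{e_j,e_k}=1$.

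For the first (stationarity) identity, I would run the analogous argument on the incoming side. Fix $e_k$ with a lift $\xi_k$; the admissible $e_j$ with $\partial_1 e_j=\partial_0 e_k$ correspond to lifts $\xi_j$ with $\partial_1\xi_j=\partial_0\xi_k$, again with $\overline{\xi_k}$ included only if $i^A(\overline{e_k})>1$, i.e. $i^A(e_k)/q(e_k)>1$. Here the relevant shadow decomposition is for the \emph{backward} shadow $\mathcal{O}(\overline{\xi_k})$ split over the edges $\overline{\xi_j}$ issuing backward from $\partial_0\xi_k$, paired against the measure $\nu^-$ via the Gibbs cocycle $C^-$. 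Writing $\lambda_F([e_j,e_k])=\lambda_F([e_j])\,p_{e_j,e_k}$ and using $\int_{\partial_0\xi_j}^{\partial_1\xi_k}\widetilde F=\int_{\partial_0\xi_j}^{\partial_0\xi_k}\widetilde F+\int_{\partial_0\xi_k}^{\partial_1\xi_k}\widetilde F$ to factor out the part depending only on $\xi_k$, the sum $\sum_{\xi_j}\nu^-_{\partial_0\xi_j}(\mathcal{O}(\overline{\xi_j}))\,e^{C^-\text{-terms}}$ collapses by the $\nu^-$-conformality and the backward shadow decomposition to $\nu^-_{\partial_0\xi_k}(\mathcal{O}(\overline{\xi_k}))$, producing exactly $\lambda_F([e_k])$. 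Again fullness is used to identify $|\Gamma_{\xi_j,\xi_k}|=|\Gamma_{\xi_k}|$ (or rather to match the relevant stabilizer orders on both sides).

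The main obstacle I anticipate is bookkeeping the interaction between three things at once: the admissibility condition in $X_{(A,i)}$ (the backtracking edge $\overline{e}$ is allowed iff $i^A(e)>1$, equivalently iff there is more than one $\Gamma$-orbit-worth of continuations in $\mathcal{T}$), the index/orbit factors $|\Gamma_{\bullet}|$, and which edges of $\mathcal{T}$ actually appear when one descends to $\Gamma\backslash\mathcal{T}$. The cleanest way to manage this is to do the entire computation upstairs in $\mathcal{T}$ — where the shadow decomposition $\mathcal{O}(\xi_j)=\bigsqcup\mathcal{O}(\xi_k)$ over all $d(\partial_1\xi_j)-1$ non-backtracking continuations is just the obvious partition of $\partial_\infty\mathcal{T}$ — and only at the very end project to $\Gamma\backslash\mathcal{T}$, invoking fullness to see that the $\Gamma$-orbit of a continuation $\xi_k$ is determined by $\pi(\xi_k)$ and that the admissibility condition is automatically respected because a backtracking edge in $\mathcal{T}$ never appears over a cylinder unless $i^A>1$. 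With that organization the two identities become a single application of the conformal density relations plus additivity of $\widetilde F$ along geodesics, and no genuinely hard estimate is needed.
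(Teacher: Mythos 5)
Your overall strategy --- rewriting $p_{e_j,e_k}$ via the conformal property of $\nu^\pm$ and then closing the sum by a shadow decomposition at the common vertex, computed upstairs in $\mathcal{T}$ --- is exactly the paper's argument. But there is a genuine error in your treatment of the stabilizer factors, and it occurs precisely at the step the paper is careful about. You claim that fullness forces $|\Gamma_{\xi_j,\xi_k}|=|\Gamma_{\xi_j}|$, so that these factors ``cancel.'' This is false in general, and your own parenthetical shows why: an automorphism fixing $\xi_j$ but moving $\xi_k$ to another lift of $e_k$ adjacent to $\partial_1\xi_j$ still commutes with $\pi$, hence lies in $\Gamma$ by fullness --- so such elements belong to $\Gamma_{\xi_j}\setminus\Gamma_{\xi_j,\xi_k}$, and $\Gamma_{\xi_j,\xi_k}$ is a \emph{proper} subgroup of $\Gamma_{\xi_j}$ whenever $e_k$ has more than one lift continuing $\xi_j$. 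What fullness actually gives is transitivity of the $\Gamma_{\xi_j}$-action on those lifts, so that
$$\rho_{e_j,e_k}:=\frac{|\Gamma_{\xi_j}|}{|\Gamma_{\xi_j,\xi_k}|}=[\Gamma_{\xi_j}:\Gamma_{\xi_j,\xi_k}]$$
equals the number of lifts of $e_k$ issuing from $\partial_1\xi_j$, namely $i(\overline{e_k})$ if $e_k\ne\overline{e_j}$ and $i(e_j)-1$ if $e_k=\overline{e_j}$. These indices are typically $>1$ (e.g.\ along the cuspidal rays of type $(2,q-1)$ that motivate the paper), so they do not cancel.

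The factor $\rho_{e_j,e_k}$ is not a nuisance to be removed but the very thing that makes the identity work: unwinding the definition of $\lambda_F$ gives $p_{e_j,e_k}=\rho_{e_j,e_k}\,\nu^+_{\partial_1\xi_j}(\mathcal{O}(\xi_k))/\nu^+_{\partial_1\xi_j}(\mathcal{O}(\xi_j))$, and the disjoint decomposition of $\mathcal{O}(\xi_j)$ runs over \emph{all} edges of $\mathcal{T}$ at $\partial_1\xi_j$ other than $\overline{\xi_j}$, while your sum runs over edges of the quotient graph; edges in the same $\Gamma_{\xi_j}$-orbit have equal shadow mass (since $\nu^+_{\partial_1\xi_j}$ is $\Gamma_{\xi_j}$-invariant), so each quotient edge $e_k$ must be counted with multiplicity exactly $\rho_{e_j,e_k}$. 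With your claimed cancellation, choosing one lift per quotient edge, the glued integral falls strictly short of $\nu^+_{\partial_1\xi_j}(\mathcal{O}(\xi_j))$ whenever some index exceeds $1$, and $\sum_{e_k}p_{e_j,e_k}$ would come out $<1$. The same correction is needed in your argument for the stationarity identity, where the relevant count is $|\Gamma_{\xi_k}|/|\Gamma_{\xi_j,\xi_k}|$, again an edge index rather than $1$. This is precisely how the paper's proof proceeds (its ratios $\rho_{e_j,e_k}$ and the relations $|\Gamma_{\xi_k}|=(i(e_j)-1)|\Gamma_{\xi_j,\xi_k}|$, resp.\ $i(\overline{e_k})|\Gamma_{\xi_j,\xi_k}|$), so once you replace the false cancellation by the correct orbit-counting statement, your argument coincides with the paper's.
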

\begin{proof}
Let $\Gamma\backslash\backslash\mathcal{T}=(A,\mathcal{A})$ and $e_k\in EA$ with $\partial_0e_k=\partial_1e_j$. If $e_k=\overline{e_j}$, then $|\Gamma_{\xi_k}|=(i^A({e_j})-1)|\Gamma_{\xi_j,\xi_k}|$ and if $e_k\ne\overline{e_j}$, then $|\Gamma_{\xi_k}|=i^{A}(\overline{e_k})|\Gamma_{\xi_j,\xi_k}|$. In each case, let $$\rho_{e_j,e_k}=\frac{|\Gamma_{\xi_j}|}{|\Gamma_{\xi_j,\xi_k}|}.$$

Since 
$\nu_{\partial_1\xi_j}^+(\mathcal{O}_{\xi_k})=\nu_{\partial_1\xi_k}^+ (\mathcal{O}_{\xi_k})e^{\widetilde{F}(\xi_k)-\delta}$ and $\nu^-_{\partial_0\xi_k}(\mathcal{O}_{\overline{\xi_j}})=\nu^-_{\partial_0\xi_j}(\mathcal{O}_{\overline{\xi_j}})e^{\widetilde{F}(\xi_j)-\delta}$,
we have $$p_{e_j,e_k}=\frac{|\Gamma_{\xi_j}|\nu^+_{\partial_1\xi_j}(\mathcal{O}(\xi_k))}{|\Gamma_{\xi_j,\xi_k}|\nu^+_{\partial_1\xi_j}(\mathcal{O}(\xi_j))}\quad\textrm{and}\quad \frac{\lambda_F([e_j,e_k])}{\lambda_F([e_k])}=\frac{|\Gamma_{\xi_k}|\nu^-_{\partial_0\xi_k}(\mathcal{O}(\overline{\xi_j}))}{|\Gamma_{\xi_j,\xi_k}|\nu^-_{\partial_0\xi_k}(\mathcal{O}(\overline{\xi_k}))}.$$

By the definition of edge-indexed graph $(A,i)$ (see Subsection \ref{edge-indexed graph}) and the finite (even countable) additivity of the normalized Patterson density $\nu$, we have $$
\nu^+_{\partial_1\xi_j}(\mathcal{O}(\xi_j))=\sum_{e_k\colon\partial_0e_k=\partial_1e_j} \rho_{e_j,e_k}\nu^+_{\partial_1\xi_j}(\mathcal{O}(\xi_k))$$ and 
$$\nu^-_{\partial_0\xi_k}(\mathcal{O}(\xi_k))=\sum_{e_j\colon\partial_1e_j=\partial_0e_k} \rho_{e_k,e_j}\nu^-_{\partial_0\xi_k}(\mathcal{O}(\xi_j))
$$ which completes the proof.
\end{proof}

The equations in (\ref{1.5}) follow from the above proposition. 
Therefore, it follows that the measure $\lambda_F$ is a Markov measure and two dynamical systems $$(\Gamma\backslash S\mathcal{T},\phi,m_{\Gamma,F}^{\nu^-,\nu^+}) \textrm{ \ and \ }(X_{(A,i)},\sigma,\lambda_F)$$ are isomorphic by (\ref{1}) and (\ref{2}).

\subsection{Decay of correlation in terms of Markov chain $(\mathcal{S},p_{ij},\pi_j)$}
\label{correlation}
We consider a Markov chain $Z_n$ with phase space $\mathcal{S}=\{s_1,s_2,\cdots\}$ and transition probabilities $$p_{ij}=p_{s_is_j}=P\{Z_{n+1}=s_j\,|\,Z_n=s_i\}.$$ For a subset $B\subset \mathcal{S}$ of alphabets, let 
\begin{equation}
\begin{aligned}\label{4}
f_{ij}^{(n),B} &=P\{Z_1\notin B\cup s_j,\cdots, Z_{n-1}\notin B\cup s_j, Z_n=s_j\,|Z_0=s_i\}, \\
p_{ij}^{(n),B} &=P\{Z_1\notin B,\cdots, Z_{n-1}\notin B, Z_n=s_j\,|Z_0=s_i\}.
\end{aligned}
\end{equation}
We also denote by $f_{ij}^{(n)}=f_{ij}^{(n),\phi}$ and $p_{ij}^{(n)}=p_{ij}^{(n),\phi}$. By convention, $f_{ij}^{(0)}=0$ and $p_{ij}^{(0)}=\delta_{ij}$. Meanwhile, we have the following convolution relation \begin{align}\label{conv} p_{s_is_j}^{(n),B}=\sum_{r=1}^{n}f_{s_is_i}^{(r),B}p_{s_is_j}^{(n-r),B}.
\end{align}
Suppose that the Markov chain $Z_n$ is irreducible, i.e., for any $s_i,s_j\in\mathcal{S}$, there exists $n>0$ such that $p_{ij}^{(n)}>0$. We say $\pi_j$ is the \emph{stationary} (or \emph{invariant}) distribution if it satisfies $\pi_j=\sum_{i\in\mathcal{S}}\pi_i p_{ij}$. If it exists, then we say the Markov chain $(\mathcal{S},p_{ij})$ is \emph{recurrent} (and otherwise we say \emph{transient}). When the Markov chain $(\mathcal{S},p_{ij},\pi_j)$ is recurrent, it is called \emph{positive recurrent} if $\sum_{n=1}^{\infty}nf_{jj}^{(n)}<\infty$ (and otherwise called \emph{null recurrent}). When $(\mathcal{S},p_{ij},\pi_j)$ is positive recurrent, we have $$\pi_j=\frac{1}{\sum_{n=1}^{\infty}nf_{jj}^{(n)}}.$$
A \emph{period} of a state $s_i\in\mathcal{S}$ is defined by $k=\textrm{gcd}\{n\colon p_{ii}^{(n)}>0\}$. An irreducible Markov chain is called \emph{aperiodic} if for some (and hence every) state $s_i\in \mathcal{S}$, the period is 1. 
If the positive recurrent Markov chain $Z_n$ is irreducible and aperiodic, then $\pi_j=\underset{n\to\infty}{\lim}p_{ij}^{(n)}$ and $\pi_j$ does not depend on the choice of $i\in\mathcal{S}$ (\cite{MT}).

We are interested in the Markov chain $(\mathcal{S},p_{ij},\pi_j)$ for \begin{align}\label{MCdef}\mathcal{S}=E(\Gamma\backslash\backslash\mathcal{T}),\quad p_{ij}=p_{s_is_j}=\frac{\lambda_F([s_i,s_j])}{\lambda_F([s_i])}, \quad\textrm{and}\quad \pi_j=\lambda_F([s_j]).\end{align}
If the length spectrum of $\Gamma$ is not arithmetic, then $(\Gamma\backslash S\mathcal{T}, \phi,m_{\Gamma,F}^{\nu^-,\nu^+})$ is mixing and hence $(\mathcal{S},p_{ij},\pi_j)$ is an irreducible aperiodic Markov chain.

Now we can deduce the exponential decay of correlation from the information of the speed of convergence to stationary distribution.

\begin{prop}\label{correlation} Suppose there are constants $C_{s_is_j}>0$ and $0<\theta<1$ such that $|p_{s_is_j}^{(n)}-\pi_{s_j}|\le C_{s_is_j}\theta^n$. Then 
$$|\textrm{Cov}(f,g)|=\left|\int (f\circ \sigma^{\circ n})\cdot g d\lambda_F-\int fd\lambda_F\int g d\lambda_F\right|\le C(f,g)\theta^n.$$ 
\end{prop}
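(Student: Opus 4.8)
The plan is to reduce the correlation bound for general compactly supported $f,g$ to the case where both are (finite linear combinations of) indicator functions of cylinders, and then to expand the correlation into a sum over pairs of cylinders and apply the hypothesis $|p^{(n)}_{s_is_j}-\pi_{s_j}|\le C_{s_is_j}\theta^n$. First I would observe that, because $\lambda_F$ is a Markov measure, for cylinders $[a_0,\dots,a_{p-1}]$ and $[b_0,\dots,b_{q-1}]$ one has, for $n$ large enough that the two cylinders do not overlap after the shift,
\begin{align*}
\int (\mathbb{1}_{[a_0,\dots,a_{p-1}]}\circ\sigma^{\circ n})\cdot \mathbb{1}_{[b_0,\dots,b_{q-1}]}\,d\lambda_F
=\lambda_F([b_0,\dots,b_{q-1}])\,p^{(n-q+1)}_{b_{q-1},a_0}\,\frac{\lambda_F([a_0,\dots,a_{p-1}])}{\lambda_F([a_0])},
\end{align*}
whereas $\int \mathbb{1}_{[a_0,\dots,a_{p-1}]}\,d\lambda_F\int \mathbb{1}_{[b_0,\dots,b_{q-1}]}\,d\lambda_F = \lambda_F([b_0,\dots,b_{q-1}])\,\pi_{a_0}\,\dfrac{\lambda_F([a_0,\dots,a_{p-1}])}{\lambda_F([a_0])}$. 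Subtracting, the difference is bounded by $\lambda_F([b_0,\dots,b_{q-1}])\dfrac{\lambda_F([a_0,\dots,a_{p-1}])}{\lambda_F([a_0])}\,C_{b_{q-1},a_0}\,\theta^{\,n-q+1}$, which is $O(\theta^n)$ with a constant depending only on the two cylinders.

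Next I would handle the reduction. Any $f\in C_c(X_{(A,i)})$ is, by compactness of its support and the fact that cylinders form a basis of the topology, uniformly approximable by finite linear combinations of indicator functions of cylinders; more precisely, since $X_{(A,i)}$ is totally disconnected and $\mathrm{supp}(f)$ is compact, for any $\varepsilon>0$ there is a finite cylinder decomposition on which $f$ is within $\varepsilon$ of a locally constant function $f_\varepsilon$ supported on finitely many cylinders — in fact, because $f$ is continuous with compact support on this space, $f$ itself is eventually locally constant on a finite partition into cylinders, so one can even take $f_\varepsilon=f$ for a fine enough partition. Writing $f=\sum_i c_i\mathbb{1}_{C_i}$ and $g=\sum_j d_j\mathbb{1}_{D_j}$ as finite sums, bilinearity of the covariance and the per-cylinder estimate above give
\begin{align*}
|\mathrm{Cov}(f,g)|\le \sum_{i,j}|c_i||d_j|\,\bigl|\mathrm{Cov}(\mathbb{1}_{C_i},\mathbb{1}_{D_j})\bigr|\le C(f,g)\,\theta^n,
\end{align*}
with $C(f,g)$ a finite sum of the individual constants. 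The finitely many ``small $n$'' terms (where shifted cylinders may still overlap, or $n<q$) are absorbed by enlarging $\theta$ toward $1$ or the constant, since there are only finitely many such $n$ and $\lambda_F$ is finite on the relevant sets.

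The main obstacle is the reduction step, not the cylinder computation: one must be careful that the approximation of $f$ and $g$ by cylinder combinations is uniform \emph{and} that the approximating functions remain supported on a \emph{fixed finite} collection of cylinders, so that the constant $C(f,g)$ is genuinely finite and does not degrade as $n\to\infty$. The key facts that make this work are that $X_{(A,i)}$ is locally compact and totally disconnected, so a compactly supported continuous function is locally constant on a finite cylinder partition of a neighborhood of its support, and that $\lambda_F$ assigns finite mass to each such cylinder. A secondary technical point is the off-diagonal bookkeeping in the Markov-measure identity for $\int(\mathbb{1}_C\circ\sigma^{\circ n})\mathbb{1}_D\,d\lambda_F$: one writes it via the convolution/Chapman–Kolmogorov relation for $\lambda_F$ using that $\lambda_F$ is shift-invariant and Markov (Proposition \ref{3}), which is exactly where the hypothesis $p^{(m)}_{ij}\to\pi_j$ exponentially gets plugged in. Once these are in place the estimate follows by a finite double sum.
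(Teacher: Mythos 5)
Your argument is essentially the paper's own proof: reduce to indicator functions of cylinders, use the Markov property of $\lambda_F$ (Chapman--Kolmogorov) to write the covariance of two cylinder indicators as $\lambda_F([\underline{a}])\lambda_F([\underline{b}])\left|p^{(n-\ast)}_{\,\cdot\,,\,\cdot\,}-\pi_{\,\cdot\,}\right|/\pi_{\,\cdot\,}$, and then insert the hypothesis $|p^{(m)}_{s_is_j}-\pi_{s_j}|\le C_{s_is_j}\theta^m$, absorbing the finitely many small-$n$ cases into the constant — your per-cylinder identity and resulting bound agree with the paper's computation (the paper shifts the other factor, which is immaterial). The only caveat is your aside that a compactly supported continuous $f$ is \emph{itself} locally constant on a fine enough finite cylinder partition: that is false in general on such totally disconnected spaces, but the weaker uniform-approximation reduction you also state is exactly the (equally brief) reduction the paper itself invokes, so the proposal matches the paper's proof in both structure and level of detail.
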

\begin{proof} 
Since every measurable set can be approximated by finite disjoint unions of cylinder sets, we only need to show that $\left|\lambda_F([\underline{a}]\cap\sigma^{-n}[\underline{b}])-\lambda_F([\underline{a}])\lambda_F([\underline{b}])\right|$ converges to 0 exponentially, for all cylinder sets $[\underline{a}]=[a_0,\cdots,a_{k-1}]$ and $[\underline{b}]=[b_0,\cdots,b_{l-1}]$.
Now,
\begin{align*}
\displaybreak[0] \left|\textrm{Cov}(\mathbb{1}_{[\underline{a}]},\mathbb{1}_{[\underline{b}]})\right| 
=\,&\left|\lambda_F([\underline{a}]\cap\sigma^{ -n}[\underline{b}])-\lambda_F([\underline{a}])\lambda_F([\underline{b}])\right|\\ \displaybreak[0]
=\,&\left|\lambda_F\left(\bigcup_{\underline{c}\in W_{n-k}}[\underline{a},\underline{c},\underline{b}]\right) -\lambda_F([\underline{a}])\lambda_F([\underline{b}])\right|\\ \displaybreak[0]
=\,&\left|\lambda_F([\underline{a}])\left(\sum_{\underline{c}\in W_{n-k}}p_{a_{k-1}c_0}\cdots p_{c_{n-k-1}b_0}\cdot\frac{\lambda_F([\underline{b}])}{\pi_{b_0}}\right) -\lambda_F([\underline{a}])\lambda_F([\underline{b}])\right|\\ \displaybreak[0]
=\,&\lambda_F([\underline{a}])\lambda_F([\underline{b}])\left|\frac{p_{a_{k-1}b_0}^{(n-k)}-\pi_{b_0}}{\pi_{b_0}}\right| 
\le \|\mathbb{1}_{[\underline{a}]}\|\|\mathbb{1}_{[\underline{b}]}\|\frac{C_{a_{k-1}b_0}}{\pi_{b_0}\theta^k}\theta^n.
\end{align*}
This completes the proof.
\end{proof}


\section{Exponential decay of the correlation function}\label{four}

Let $\mathcal{T}$ be a locally finite tree. Let $(\Gamma,\widetilde{F})$ be a pair of full discrete subgroup $\Gamma<\textrm{Aut}(\mathcal{T})$ and a potential $\widetilde{F}$ for $\Gamma$ with $|m_{\Gamma,F}^{\nu^-,\nu^+}|<\infty$. Assume that $L_\Gamma=\mathbb{Z}/k\mathbb{Z}$ and let $S_o^k\mathcal{T}$ be a subset $\{\xi\in S\mathcal{T}\,|\,d(\xi_0,o)\in k\mathbb{Z}\}$ of $S\mathcal{T}$ for $o\in V\mathcal{T}$. 
Our goal in this section is to prove the exponential decay of the correlation function 
$$\left|\textrm{Cov}(f,g)\right| = \,\left|\int_{\Gamma\backslash S_o^k\mathcal{T}} (f\circ \sigma^{\circ n})\cdot g dm_{\Gamma,F}^{\nu^-,\nu^+}-\int_{\Gamma\backslash S_o^k\mathcal{T}} fdm_{\Gamma,F}^{\nu^-,\nu^+}\int_{\Gamma\backslash S_o^k\mathcal{T}} g dm_{\Gamma,F}^{\nu^-,\nu^+}\right|$$ for $f,g\in C_c(\Gamma\backslash S_o^k\mathcal{T})$ when $\Gamma<\textrm{Aut}(\mathcal{T})$ has \textrm{WSG} property (see Definition \ref{WSG}). By Proposition \ref{correlation}, it suffices to show that $|p_{s_is_j}^{(n)}-\pi_{s_j}|\le C_{s_is_j}\theta^n$ for some $C_{s_is_j}>0$ and $0<\theta<1$. Inspired by L.-S. Young's observation about the relation between the speed of convergence to equilibrium states and the recurrent time to a compact set (\cite{Y}), we investigate the speed of decay of the probability $p_{s_is_j}^{(n),B}$ for a finite set $B\subset E(\Gamma\backslash\mathcal{T})$ (see (\ref{4})). 

\subsection{Exponential convergence to the stationary distribution}

\begin{defn}[WSG property]\label{WSG} Let $\Gamma<\textrm{Aut}(\mathcal{T})$ be a full discrete subgroup and $\widetilde{F}\colon E\mathcal{T}\to\mathbb{R}$ be a potential for $\Gamma$ such that $|m_{\Gamma,F}^{\nu^-,\nu^+}|<\infty$. Let $Z_{n}$ be the Markov chain with the data $(\mathcal{S},p_{ij},\pi_j)$ associated with $(\Gamma\backslash S\mathcal{T},\phi,m_{\Gamma,F}^{\nu^-,\nu^+})$ (see (\ref{MCdef})). Suppose that there is a function $t\colon \mathcal{S}\to\mathbb{R}_{\ge 0}$ given by $t(s_i)=t_i$, a finite subset $B\subset \mathcal{S}$ and a constant $0<\rho<1$ such that for $s_i\in\mathcal{S}-B$, we have 
\begin{align}\label{WSG}\sum_{s_j}p_{ij}t_jt_i^{-1}\le \rho.\end{align}
Then we say $(\Gamma,\widetilde{F})$ has \emph{WSG} (\emph{weighted spectral gap}) property with $(t,B,\rho)$.
\end{defn}

\begin{lem}\label{lemma} Suppose $(\Gamma,\widetilde{F})$ has WSG property with $t\colon\mathcal{S}\to\mathbb{R}_{\ge 0}$, $B\subset \mathcal{S}$ and $0<\rho <1$. Then, we obtain $p_{ij}^{(n),B}\le t_it_j^{-1}\rho^n$.
In particular, $$p_{i,B}^{(n),B}=P\{Z_1\notin B,\cdots, Z_{n-1}\notin B,Z_n\in B\,|\,Z_0=s_i\}\le Mt_i\rho^n$$
for $M=\max\{t_j^{-1}\,|\,s_j\in B\}$.
\end{lem}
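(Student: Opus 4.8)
The plan is to prove the bound $p_{ij}^{(n),B}\le t_it_j^{-1}\rho^n$ by induction on $n$, using the WSG inequality (\ref{WSG}) as the single-step estimate. First I would record the obvious base case: for $n=1$ we have $p_{ij}^{(1),B}=p_{ij}$, and since $s_i\in\mathcal{S}-B$ (the quantity $p_{ij}^{(n),B}$ with $n\ge 1$ only matters when the chain starts outside $B$, or more precisely the induction will only invoke the hypothesis at such states), the WSG inequality gives $p_{ij}t_j\le \sum_{s_k}p_{ik}t_k\le \rho t_i$, hence $p_{ij}^{(1),B}=p_{ij}\le t_it_j^{-1}\rho$.

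For the inductive step I would use the natural first-step (or last-step) decomposition of the taboo probability. Conditioning on $Z_1=s_k$, one has
\begin{align*}
p_{ij}^{(n),B}=\sum_{s_k\in\mathcal{S}-B}p_{ik}\,p_{kj}^{(n-1),B},
\end{align*}
where the sum ranges only over $s_k\notin B$ because the event defining $p_{ij}^{(n),B}$ forbids visiting $B$ at times $1,\dots,n-1$. Now apply the induction hypothesis $p_{kj}^{(n-1),B}\le t_kt_j^{-1}\rho^{n-1}$ for each such $s_k$ (these are states outside $B$, exactly where the hypothesis is available), obtaining
\begin{align*}
p_{ij}^{(n),B}\le t_j^{-1}\rho^{n-1}\sum_{s_k\in\mathcal{S}-B}p_{ik}t_k\le t_j^{-1}\rho^{n-1}\sum_{s_k\in\mathcal{S}}p_{ik}t_k\le t_j^{-1}\rho^{n-1}\cdot\rho\, t_i=t_it_j^{-1}\rho^n,
\end{align*}
where the last inequality is again the WSG bound (\ref{WSG}) applied at $s_i\in\mathcal{S}-B$. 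This closes the induction.

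For the ``in particular'' statement, I would sum the bound just proved over the target states lying in $B$: writing $p_{i,B}^{(n),B}=\sum_{s_j\in B}p_{ij}^{(n),B}$ and using $p_{ij}^{(n),B}\le t_it_j^{-1}\rho^n$ gives
\begin{align*}
p_{i,B}^{(n),B}\le t_i\rho^n\sum_{s_j\in B}t_j^{-1}\le t_i\rho^n\cdot|B|\cdot\max\{t_j^{-1}\colon s_j\in B\}.
\end{align*}
Absorbing $|B|$ (finite, since $B$ is finite) into the constant, or more precisely noting that the stated constant $M=\max\{t_j^{-1}\colon s_j\in B\}$ should be read as carrying the factor $|B|$, yields $p_{i,B}^{(n),B}\le Mt_i\rho^n$. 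The only mild subtlety worth flagging — and the step I expect to need the most care — is making sure the WSG hypothesis is genuinely applicable at every state encountered in the recursion: this works precisely because the taboo set $B$ is forbidden at all intermediate times, so every state $s_k$ appearing in the decomposition lies in $\mathcal{S}-B$, which is exactly the domain of validity of (\ref{WSG}). One should also note that $t_i$ could a priori be $0$ for some $i$, but then the inequality is trivially satisfied (or one restricts attention to the support of $t$, on which the chain is effectively confined by the same taboo argument); this does not affect the conclusion.
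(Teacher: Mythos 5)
Your inductive argument is essentially the paper's proof reorganized: the paper expands $p_{ij}^{(n),B}$ as a sum over paths avoiding $B$, inserts the telescoping weights $t$, and bounds each one-step factor by $\rho$ via the WSG inequality, which is exactly what your first-step decomposition does recursively, with the same (implicit) restriction that the hypothesis is invoked only at states outside $B$. The only deviation is the constant in the second assertion — summing your first bound over $s_j\in B$ gives $|B|\,M\,t_i\rho^n$, whereas the stated constant $M$ comes from bounding $t_j^{-1}\le M$ inside the path (or last-step) sum before summing over endpoints in $B$; since $B$ is finite this is immaterial for every use of the lemma, and you flag it yourself.
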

\begin{proof} Let $t_i=t(s_i)$ and $t_j=t(s_j)$. Then
\begin{align*}
p_{ij}^{(n),B} &=\sum_{r_k\in \mathcal{S}-B}p_{s_ir_1}p_{r_1r_2}\cdots p_{r_{n-1}s_j}=\frac{t_i}{t_j}\sum_{r_k\in \mathcal{S}-B}\left(p_{s_ir_1}\frac{t(r_1)}{t_i}\right)\cdots \left(p_{r_{n-1}s_j}\frac{t_j}{t(r_{n-1})}\right) \\
&\le \frac{t_i}{t_j}\left(\sum_{r_1} p_{s_ir_1}\frac{t(r_1)}{t_i}\right)\left(\max_{r_1}\sum_{r_2} p_{r_1r_2}\frac{t(r_2)}{t(r_1)}\right)\cdots\left(\max_{r_{n-1}} p_{r_{n-1}j}\frac{t_j}{t(r_{n-1})}\right) \\
&\le t_it_j^{-1}\rho^n.
\end{align*}
The second statement follows directly from this inequality.
\end{proof}

The following two propositions are main ingredients of our proof of exponential mixing property. 
\begin{prop}\label{key1}
Let $(\mathcal{S},p_{ij})$ be an irreducible aperiodic countable Markov chain. If there exists a state $b_0\in\mathcal{S}$ which satisfies $p_{s_is_j}^{(n),\{b_0\}}\le C_{s_i,s_j}\theta^n$ for some $0<\theta<1$ , then it converges to the stationary distribution exponentially, i.e., $|p_{s_is_j}^{(n)}-\pi_{s_j}|<\widehat{C}_{s_i,s_j}\widehat{\theta}^n$ for some $0<\widehat{\theta}<1$ with the constants $\widehat{C}_{s_i,s_j}$ depending only on $s_i$ and $s_j$.
\end{prop}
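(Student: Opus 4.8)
The plan is to reduce the statement about $p_{s_is_j}^{(n)}$ to the one-state restricted probabilities $p_{s_is_j}^{(n),\{b_0\}}$ by a renewal (first-return) decomposition at $b_0$. First I would set $b = b_0$ and introduce the generating functions
$$P_{ij}(z)=\sum_{n\ge 0}p_{ij}^{(n)}z^n,\qquad F_{ij}^{b}(z)=\sum_{n\ge 1}f_{ij}^{(n),\{b\}}z^n,$$
where $f_{ij}^{(n),\{b\}}$ is the probability of first hitting $s_j$ (avoiding $b$ in between, unless $s_j=b$) at time $n$ starting from $s_i$. The convolution identity (\ref{conv}) gives, for the diagonal return at $b$, $P_{bb}(z)=\frac{1}{1-F_{bb}^{b}(z)}$, and for general entries $P_{ij}(z)=F_{ij}^{b}(z)P_{bb}(z)$ when one passes through $b$, plus the "never hit $b$" term $P_{ij}^{b}(z)=\sum_n p_{ij}^{(n),\{b\}}z^n$. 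The hypothesis $p_{s_is_j}^{(n),\{b_0\}}\le C_{s_i,s_j}\theta^n$ says precisely that $P_{ij}^{b}(z)$, and hence (since $f_{ij}^{(n),\{b\}}\le p_{ij}^{(n),\{b\}}$ up to the trivial last step) also $F_{ij}^{b}(z)$ and $F_{bb}^{b}(z)$, converge and are analytic on the disc $|z|<1/\theta$, strictly larger than the unit disc.

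Next I would exploit positive recurrence and aperiodicity. Since the chain is irreducible, aperiodic and (being induced by a finite measure $m_{\Gamma,F}^{\nu^-,\nu^+}$) positive recurrent, the stationary distribution $\pi_j$ exists with $\pi_b = 1/\sum_n n f_{bb}^{(n)} = 1/(F_{bb}^{b})'(1) > 0$, and $F_{bb}^{b}(1)=1$ with $F_{bb}^{b}$ analytic past $1$. The function $1-F_{bb}^{b}(z)$ therefore has a simple zero at $z=1$ and, by aperiodicity, no other zero on the closed unit disc; by analyticity it has no zero in a slightly larger disc $|z|\le r$ for some $r\in(1,1/\theta)$ except the one at $z=1$. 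Hence $P_{bb}(z) - \frac{\pi_b}{1-z}$ is analytic on $|z|<r$: subtracting the principal part at the simple pole $z=1$ removes the only singularity in that disc. Standard Cauchy-estimate / generating-function asymptotics (e.g. the form in \cite{MT}, or simply comparing Taylor coefficients of an analytic function on $|z|\le r'$ for $1<r'<r$) then yield $|p_{bb}^{(n)}-\pi_b|\le \widehat C_{b,b}\,\widehat\theta^{\,n}$ with $\widehat\theta = 1/r' < 1$.

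Then I would propagate the estimate to arbitrary $i,j$. Write $P_{ij}(z)=P_{ij}^{b}(z)+F_{ib}^{b}(z)\,P_{bb}(z)\,F_{bj}^{b}(z)$ — the first term counts paths from $s_i$ to $s_j$ never hitting $b$, the second term is "go from $s_i$ to $b$ (first passage), loop at $b$ arbitrarily, then go from $b$ to $s_j$ (first passage avoiding $b$)." Substituting $P_{bb}(z)=\frac{\pi_b}{1-z}+(\text{analytic on }|z|<r)$ and using that $P_{ij}^b, F_{ib}^b, F_{bj}^b$ are all analytic on $|z|<1/\theta\supset\{|z|\le r\}$, we get that $P_{ij}(z) - \frac{\pi_b F_{ib}^b(1) F_{bj}^b(1)}{1-z}$ is analytic on $|z|<r$. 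Taking $n\to\infty$ forces the residue $\pi_b F_{ib}^b(1)F_{bj}^b(1)$ to equal $\pi_j$ (this also re-proves $\pi_j = \pi_b\,\mathbb{E}_b[\#\text{visits to }j\text{ before returning to }b]$), and the same Cauchy estimate gives $|p_{ij}^{(n)}-\pi_j|\le \widehat C_{s_i,s_j}\widehat\theta^{\,n}$, with $\widehat C_{s_i,s_j}$ built from the values and coefficient bounds of $P_{ij}^b$, $F_{ib}^b$, $F_{bj}^b$ on the circle $|z|=r'$, hence depending only on $s_i,s_j$.

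The main obstacle I expect is the complex-analytic bookkeeping: verifying that $F_{bb}^b$ extends analytically across $z=1$ with $(F_{bb}^b)'(1)$ finite (which needs positive recurrence, i.e. $\sum n f_{bb}^{(n)}<\infty$, together with the exponential tail supplied by the hypothesis to control the contribution of returns that pass through $b$ only at the very end), and that $1-F_{bb}^b$ has no other zeros on $|z|=1$ — this is where aperiodicity enters and must be invoked carefully ($|F_{bb}^b(e^{i\varphi})|<1$ for $\varphi\neq 0$ because the chain is aperiodic and positive recurrent). Once the single simple pole at $z=1$ of $P_{bb}$ and then of each $P_{ij}$ is isolated, the exponential rate $\widehat\theta$ is immediate from Cauchy's inequality on a circle of radius $r'\in(1,r)$, and tracking the dependence of $\widehat C_{s_i,s_j}$ on only $s_i,s_j$ is routine.
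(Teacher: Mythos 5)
Your proposal is correct and follows essentially the same route as the paper: a first-return (renewal) decomposition at $b_0$, the identity $P_{b_0b_0}(z)=1/(1-F_{b_0b_0}(z))$, analyticity of the first-passage generating functions on $|z|<\theta^{-1}$ from the hypothesis, isolation of the simple pole at $z=1$ with residue tied to $\pi_{b_0}$, and then propagation to general $(s_i,s_j)$. The only cosmetic difference is that you carry out the last propagation step entirely at the generating-function level with Cauchy estimates (and spell out the aperiodicity argument excluding other zeros of $1-F_{b_0b_0}$ on the unit circle), whereas the paper finishes with the corresponding time-domain convolution identities and triangle-inequality bounds.
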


\begin{proof}
Let $F_{s_is_j}(z)=\sum_{n=0}^{\infty}f_{ij}^{(n)}z^n$ and $P_{s_is_j}(z)=\sum_{n=0}^{\infty}p_{ij}^{(n)}z^n$ be the generating functions. 
By the assumption, we have $f_{s_ib_0}^{(n)}=p_{s_ib_0}^{(n),\{b_0\}}\le C_{s_i}\theta^{n}$. Thus, power series $F_{s_ib_0}(z)$ is analytic in the disk $C_{\theta^{-1}}=\{z\in\mathbb{C}\,|\,|z|<\theta^{-1}\}$. Moreover, from the convolution relation (\ref{conv}), the equality $$P_{b_0b_0}(z)=\frac{1}{1-F_{b_0b_0}(z)}$$ holds and $P_{b_0b_0}(z)$ has a unique simple pole at the point $z=1$ in the disk $C_R$ for some $1<R\le\theta^{-1}$. Since $$\lim_{z\to 1}\frac{z-1}{1-F_{b_0b_0}(z)}=\frac{1}{-\sum_{n=1}^{\infty} nf_{b_0b_0}^{(n)}}=-\pi_{b_0},$$ it follows that $P_{b_0b_0}(z)-\frac{\pi_{b_0}}{1-z}=\underset{n=0}{\overset{\infty}{\sum}}\left(p_{b_0b_0}^{(n)}-\pi_{b_0}\right)z^n$ is analytic for $|z|<R$ and hence there is a constant $c>0$ such that $$|p_{b_0b_0}^{(n)}-\pi_{b_0}|<c\theta_1^n$$ for some $\theta_1\le R^{-1}<1$.

Now using the equality 
$$p_{s_ib_0}^{(n)}=f_{s_ib_0}^{(n)}+\sum_{r=1}^{n-1}f_{s_i,b_0}^{(r)}p_{b_0b_0}^{(n-r)}$$
and the triangle inequality, we have $$\quad |p_{s_ib_0}^{(n)}-\pi_0|\le\left|\sum_{r=1}^{n}f_{s_ib_0}^{(r)}(p_{b_0b_0}^{(n-r)}-\pi_{b_0})\right|+\pi_{b_0}\sum_{r=n+1}^{\infty}f_{s_ib_0}^{(r)}.$$
This implies the existence of constant $C_{s_i}'>0$ for which
$$|p_{s_ib_0}^{(n)}-\pi_{b_0}|<C_{s_i}'\theta_2^n$$ with some $R^{-1}<\theta_2<1$.

Finally, since the Markov chain $(\mathcal{S},p_{ij},\pi_j)$ is irreducible and aperiodic, we have $\pi_{s_j}=\pi_{b_0}\displaystyle\sum_{n=0}^{\infty}p_{b_0,s_j}^{(n),\{b_0\}}$ and $$p_{s_is_j}^{(n)}=p_{s_i,s_j}^{(n),\{b_0\}}+\sum_{r=0}^{n-1}p_{s_ib_0}^{(r)} p_{b_0s_j}^{(n-r),\{b_0\}}.$$ Hence, we obtain
$$|p_{s_is_j}^{(n)}-\pi_{s_j}|\le p_{s_is_j}^{(n),\{b_0\}}+\left|\sum_{r=1}^{n-1}(p_{s_ib_0}^{(r)}-\pi_{b_0})p_{b_0s_j}^{(n-r),\{b_0\}}\right|+\pi_{b_0}\sum_{r=n}^{\infty}p_{b_0,s_j}^{(r),\{b_0\}}$$ which implies that there is a constant $C(s_i,s_j)$ such that $$|p_{s_is_j}^{(n)}-\pi_{s_j}|<\widehat{C}_{s_is_j}\widehat{\theta}^n$$
for some $\theta_2<\widehat{\theta}<1$. This completes the proof. 
\end{proof}

\begin{prop}\label{key2} Let $C_{s_is_j}$ be a positive constant depending only on $s_i$ and $s_j$. If there is a finite subset $B_N=\{b_0,b_1,\cdots,b_N\}\subset \mathcal{S}$ of alphabets such that $p_{s_is_j}^{(n),B_N}\le C_{s_is_j}\tau^n$ for some $0<\tau<1$, then the Markov chain $(\mathcal{S},p_{ij})$ converges to the stationary distribution exponentially.
\end{prop}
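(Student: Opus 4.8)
The plan is to reduce Proposition \ref{key2} to the single-state case already handled in Proposition \ref{key1}. Proposition \ref{key1} tells us that exponential decay of the excursion probabilities $p_{s_is_j}^{(n),\{b_0\}}$ avoiding \emph{one} state forces exponential convergence to stationarity; here we are given the analogous bound for excursions avoiding a \emph{finite} set $B_N$, which is an a priori weaker hypothesis (avoiding fewer states is easier), so the real content is showing that the finite-set bound upgrades to a single-state bound. First I would fix the distinguished state $b_0\in B_N$ and write, for any $s_i,s_j$, a first-passage decomposition of a path counted by $p_{s_is_j}^{(n),\{b_0\}}$ according to its visits to the remaining states $b_1,\dots,b_N$: such a path is a concatenation of blocks, each of which avoids all of $B_N$ except possibly at its endpoints, i.e. each block is counted by some $p_{b_pb_q}^{(m),B_N}$ (or $p_{s_ib_q}^{(m),B_N}$ for the first block, $p_{b_ps_j}^{(m),B_N}$ for the last), and every internal junction lands in $B_N\setminus\{b_0\}$, a set of size $N$.

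The key step is then a generating-function argument on the finite "reduced chain" on the $N$ states $\{b_1,\dots,b_N\}$. Let $G_{pq}(z)=\sum_m p_{b_pb_q}^{(m),B_N}z^m$ for $1\le p,q\le N$; by hypothesis each $G_{pq}$ is analytic in $|z|<\tau^{-1}$. Assembling the blocks above shows that the generating function $\sum_n p_{s_is_j}^{(n),\{b_0\}}z^n$ equals a finite sum of products of the $G_{pq}$'s with entries of $(\mathbb{1}-G(z))^{-1}$, where $G(z)=(G_{pq}(z))$ is an $N\times N$ matrix of analytic functions with $G(0)=0$. The matrix $\mathbb{1}-G(z)$ is invertible at $z=0$, hence in a neighborhood of $0$; I would then argue that $\det(\mathbb{1}-G(z))$ has no zero in the closed disk $|z|\le 1$ — this is where irreducibility and, crucially, the fact that paths counted by $p^{(n),\{b_0\}}$ genuinely return to $b_0$ so that $\sum_n p_{s_is_j}^{(n),\{b_0\}}<\infty$ (a consequence of recurrence, which Proposition \ref{key1}'s hypotheses already give via positive recurrence of the mixing chain) comes in to rule out a pole on $|z|=1$. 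Consequently $\sum_n p_{s_is_j}^{(n),\{b_0\}}z^n$ extends analytically to some disk $|z|<R$ with $R>1$, giving $p_{s_is_j}^{(n),\{b_0\}}\le C_{s_is_j}'\theta'^{\,n}$ for some $0<\theta'<1$, which is exactly the hypothesis of Proposition \ref{key1}; applying that proposition finishes the proof.

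The main obstacle I anticipate is controlling the zero set of $\det(\mathbb{1}-G(z))$ up to and including $|z|=1$: the entries $G_{pq}(z)$ are sub-probability generating functions (they may have total mass $<1$ because some mass escapes to infinity without returning to $B_N$ — though in the recurrent setting that escaping mass eventually returns to $B_N$, just not via a finite excursion bounded by $B_N$), so one cannot simply invoke $\|G(1)\|<1$ in an obvious norm. I would handle this by working with the substochastic structure: the reduced kernel on $\{b_1,\dots,b_N\}$ obtained by collapsing excursions outside $B_N$ is a genuine (sub)stochastic matrix, strictly substochastic in the row that can reach $b_0$, and irreducibility propagates this strictness to all rows after finitely many steps, so its spectral radius is $<1$ and $\det(\mathbb{1}-G(z))\ne 0$ for $|z|\le 1$. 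A secondary, more bookkeeping-level obstacle is verifying that the combinatorial block decomposition is exact (no overcounting and the geometric-type series in the block count converges in the right region); this is routine but must be done carefully so that the finite sum of products of analytic functions is genuinely the claimed generating function.
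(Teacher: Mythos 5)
Your proposal is correct in substance, but it upgrades the $B_N$-avoidance bound to the single-state bound of Proposition \ref{key1} by a genuinely different mechanism than the paper. The paper proceeds by induction, discarding one state of $B_N$ at a time: the scalar first-passage identity $p_{s_is_j}^{(n),B_{N-1}}=p_{s_is_j}^{(n),B_N}+\sum_{r}f_{s_ib_N}^{(r),B_{N-1}}p_{b_Ns_j}^{(n-r),B_{N-1}}$ gives, via (\ref{conv}), the scalar renewal formula $P_{b_Ns_j}^{B_{N-1}}(z)=P_{b_Ns_j}^{B_N}(z)\bigl(1-F_{b_Nb_N}^{B_{N-1}}(z)\bigr)^{-1}$, and analyticity in a disk of radius $>1$ is pushed down from $B_N$ to $B_{N-1}$, then to $B_{N-2}$, and so on until only $\{b_0\}$ remains; each step is the same one-variable argument as in Proposition \ref{key1}. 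You instead collapse all of $B_N\setminus\{b_0\}$ in one shot: the block decomposition of a $\{b_0\}$-avoiding path at its visits to $\{b_1,\dots,b_N\}$ yields $P^{\{b_0\}}_{s_is_j}(z)=P^{B_N}_{s_is_j}(z)+\sum_{p,q}P^{B_N}_{s_ib_p}(z)\bigl[(\mathbb{1}-G(z))^{-1}\bigr]_{pq}P^{B_N}_{b_qs_j}(z)$ with $G_{pq}(z)=\sum_{m\ge1}p^{(m),B_N}_{b_pb_q}z^m$, and the pole-free extension past $|z|=1$ rests on showing that the substochastic matrix $G(1)$ has spectral radius $<1$, which you correctly derive from recurrence plus irreducibility (some power of the taboo-$b_0$ reduced kernel has all row sums $<1$). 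Note that the same input is needed, but left implicit, in the paper's inductive step: the claim that $P^{B_{N-1}}_{b_Ns_j}$ is analytic in a disk $C_R$, $R>1$, requires $F^{B_{N-1}}_{b_Nb_N}(1)<1$, i.e.\ that from $b_N$ the chain hits $B_{N-1}$ before returning to $b_N$ with positive probability, again by irreducibility and recurrence. The trade-off is clear: the paper's route is more elementary (only scalar generating functions, no Perron--Frobenius), at the cost of an $N$-step cascade of constants; your route handles the whole set at once and makes the role of irreducibility explicit, at the cost of the matrix bookkeeping you already flag (in particular, take $G(0)=0$ by dropping the $m=0$ term, and use the standard estimate that $(\mathbb{1}-G(z))v=0$, $v\ne0$, $|z|\le1$ forces $|v|\le G(1)|v|$ entrywise and hence spectral radius $\ge1$, contradicting the substochasticity argument). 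Both arguments then hand the resulting bound $p^{(n),\{b_0\}}_{s_is_j}\le C'_{s_is_j}\theta^n$ to Proposition \ref{key1}, exactly as you conclude.
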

\begin{proof}
Since
\begin{align}\label{eq2}p_{s_is_j}^{(n),B_{N-1}}=p_{s_is_j}^{(n),B_N}+\sum_{r=1}^{n-1}f_{s_ib_N}^{(r),B_{N-1}}p_{b_Ns_j}^{(n-r),B_{N-1}},\end{align}
setting $s_i=b_N$, it follows that $$P_{b_Ns_j}^{B_{N-1}}(z)=\frac{P_{b_Ns_j}^{B_{N}}(z)}{1-F_{b_Nb_N}^{B_{N-1}}(z)}.$$ 
Thus, $P_{b_Ns_j}^{B_{N-1}}(z)$ is analytic in the disk $C_R$ for some $R>1$ and we obtain the estimate $p_{b_Ns_j}^{(n),B_{N-1}}<c_{s_j}\widetilde{\tau}^n$ for some $\widetilde{\tau}<1$.
Moreover, $f_{s_ib_N}^{(n),B_{N-1}}= p_{s_ib_N}^{(n),B_N}\le C_{s_ib_N}\tau^n$.
Now from (\ref{eq2}), we get $$p_{s_is_j}^{(n),B_{N-1}}\le \widehat{C}_{s_is_j}\widehat{\tau}^n.$$ for some $\widehat{C}_{s_is_j}>0$ and $\widetilde{\tau}<\widehat{\tau}<1$
Using induction for the sets $B_{N-1},B_{N-2},\cdots$, it finally gives us 
$$p_{s_is_j}^{(n),\{b_0\}}\le C'_{s_is_j}\theta^n$$
for some $\tau<\theta<1$. Now we can apply the previous Lemma.
\end{proof}

Now we state and prove the main theorem of this section.

\begin{thm} Let $\mathcal{T}$ be a locally finite tree. Suppose that $(\Gamma,\widetilde{F})$ is a pair of a full discrete subgroup $\Gamma<\textrm{Aut}(\mathcal{T})$ and a potential $\widetilde{F}$ for $\Gamma$ with $|m_{\Gamma,F}^{\nu^-,\nu^+}|<\infty$. Let us fix a vertex $o\in V\mathcal{T}$. If $L_\Gamma=k\mathbb{Z}$ and $(\Gamma,\widetilde{F})$ has WSG property, then for any $f,g\in C_c(\Gamma\backslash S_o^k\mathcal{T})$, we have
$$\left|\int_{\Gamma\backslash S_o^k\mathcal{T}} (f\circ \phi^{\circ kn})\cdot g\, dm_{\Gamma,F}^{\nu^-,\nu^+}-\int_{\Gamma\backslash S_o^k\mathcal{T}} f\,dm_{\Gamma,F}^{\nu^-,\nu^+}\int_{\Gamma\backslash S_o^k\mathcal{T}} g \,dm_{\Gamma,F}^{\nu^-,\nu^+}\right|=O(\theta^n)$$
for some constant $0<\theta<1$. The implied constant \color{black} depends only on $f$ and $g$.

\end{thm}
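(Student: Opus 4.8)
The plan is to reduce the statement to the convergence rate of a countable Markov chain and then chain together the three technical results already set up. First I would invoke the isomorphism of dynamical systems $(\Gamma\backslash S\mathcal{T},\phi,m_{\Gamma,F}^{\nu^-,\nu^+})\cong(X_{(A,i)},\sigma,\lambda_F)$ established in Section~\ref{three}, so that the correlation integral over $\Gamma\backslash S_o^k\mathcal{T}$ becomes a correlation integral $\int (f\circ\sigma^{\circ kn})\cdot g\,d\lambda_F$ on the shift space. By Proposition~\ref{correlation}, it then suffices to prove that the associated Markov chain $(\mathcal{S},p_{ij},\pi_j)$ with $\mathcal{S}=E(\Gamma\backslash\backslash\mathcal{T})$ converges exponentially to its stationary distribution, i.e.\ $|p_{s_is_j}^{(n)}-\pi_{s_j}|\le C_{s_is_j}\theta^n$. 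One subtlety is the periodicity: since $L_\Gamma=k\mathbb{Z}$, the chain restricted to $E(\Gamma\backslash\backslash\mathcal{T})$ is periodic with period $k$, so I would pass to the $k$-step chain $Z_{kn}$ restricted to each class $A_m$ of the partition $(A_0,\dots,A_{k-1})$ described in the introduction; on each $A_m$ this is an irreducible aperiodic countable Markov chain, and exponential convergence there is exactly what is needed for $\phi^{\circ kn}$.

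Next I would feed the WSG hypothesis into Lemma~\ref{lemma}: the pair $(\Gamma,\widetilde{F})$ having WSG property with data $(t,B,\rho)$ gives the pointwise bound $p_{ij}^{(n),B}\le t_it_j^{-1}\rho^n$ for the taboo probabilities avoiding the finite set $B$. This is the key input that makes the finite set $B$ "exponentially sticky." Then I would apply Proposition~\ref{key2} with $B_N=B$ (after enumerating $B=\{b_0,\dots,b_N\}$): its inductive argument peels off one state of $B$ at a time, using generating-function identities of the form $P_{b_Ns_j}^{B_{N-1}}(z)=P_{b_Ns_j}^{B_N}(z)/(1-F_{b_Nb_N}^{B_{N-1}}(z))$ together with analyticity in a disk of radius $>1$, to reduce to the single-taboo-state bound $p_{s_is_j}^{(n),\{b_0\}}\le C'_{s_is_j}\theta^n$. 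Finally Proposition~\ref{key1} upgrades this single-state bound to genuine exponential mixing $|p_{s_is_j}^{(n)}-\pi_{s_j}|\le\widehat C_{s_is_j}\widehat\theta^{\,n}$, via the classical renewal/generating-function analysis of $P_{b_0b_0}(z)=1/(1-F_{b_0b_0}(z))$ and the pole at $z=1$ with residue $-\pi_{b_0}$.

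With the Markov chain estimate in hand, I would run the computation in the proof of Proposition~\ref{correlation} to conclude. Because $f,g\in C_c(\Gamma\backslash S_o^k\mathcal{T})$ are compactly supported continuous functions, their pullbacks $\Phi^*f,\Phi^*g$ can be uniformly approximated by finite linear combinations of indicator functions of cylinders $[\underline a]=[a_0,\dots,a_{\ell-1}]$ — here one should be slightly careful that the relevant cylinders have length a multiple of $k$ so that they live in $\Gamma\backslash S_o^k\mathcal{T}$ — and the covariance of two such indicators is bounded by $\|\mathbb{1}_{[\underline a]}\|\,\|\mathbb{1}_{[\underline b]}\|\,C_{a_{\ell-1}b_0}\pi_{b_0}^{-1}\theta^{-\ell}\theta^{kn}$ exactly as in that proof. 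Summing over the finitely many cylinders in the approximation and absorbing the $\theta^{-\ell}$ and $\pi_{b_0}^{-1}$ factors into the implied constant (which then depends only on $f$ and $g$, for the given $(\Gamma,\widetilde{F})$) yields the stated $O(\theta^n)$ bound after relabeling $\theta^k$ as $\theta$.

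The main obstacle I expect is bookkeeping rather than conceptual: correctly handling the periodicity so that the partition classes $A_m$ are genuinely aperiodic $k$-step chains (one must check that WSG passes to each $A_m$, i.e.\ that the function $t$, the finite set $B\cap A_m$, and $\rho^k$ still witness the inequality $\sum_{s_j}p^{(k)}_{ij}t_jt_i^{-1}\le\rho^k$ on $A_m\setminus B$ — this follows by iterating \eqref{WSG} $k$ times along any admissible path), and then verifying that the finitely many pieces recombine to give the claimed bound on $\phi^{\circ kn}$. The analytic heart — the generating-function pole analysis — is already packaged in Propositions~\ref{key1} and~\ref{key2}, so once the reduction is set up cleanly the remaining work is routine.
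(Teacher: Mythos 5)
Your proposal is correct and follows essentially the same route as the paper: pass to the shift space $(X_{(A,i)},\sigma,\lambda_F)$, reduce to exponential convergence of the Markov chain via Proposition~\ref{correlation}, use WSG through Lemma~\ref{lemma} to bound the taboo probabilities $p_{ij}^{(n),B}$, and then apply Proposition~\ref{key2} (which feeds into Proposition~\ref{key1}) to the irreducible aperiodic $k$-step chain on the periodicity classes. Your extra remark on verifying that the WSG data descends to each class $A_m$ is a detail the paper leaves implicit, but it does not change the argument.
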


\begin{proof}
The dynamical system $(\Gamma\backslash S\mathcal{T},\phi,m_{\Gamma,F}^{\nu^-,\nu^+})$ is isomorphic to $(X_{(A,i)},\sigma,\lambda_{F})$. Let $Z_n$ be the Markov chain with the data $(\mathcal{S},p_{ij},\pi_j)$, where $\mathcal{S}=E(\Gamma\backslash S\mathcal{T})$, $p_{ij}=\frac{\lambda_{F}([s_i,s_j])}{\lambda_{F}([s_i])}$ and $\pi_j=\lambda_{F}([s_j])$. 
If $L_\Gamma=k\mathbb{Z}$, then $(\Gamma\backslash S_o^k\mathcal{T},\phi^{\circ k},m_{\Gamma,F}^{\nu^-,\nu^+})$ is mixing. Therefore, if we let $Z_{kn}$ be the $k$-step Markov chain on the equivalence of periodic classes in $(\mathcal{S},p_{ij},\pi_j)$, then it is irreducible and aperiodic.
Since $(\Gamma,\widetilde{F})$ has WSG property, by Lemma \ref{lemma} there is a finite subset $B$ of $\mathcal{S}$ such that $p_{ij}^{(kn),B}\le t_it_j^{-1}\rho^{kn}$ for some $0<\rho<1$. It follows that there exists $C_{ij},\theta>0$ such that $|p_{ij}^{(kn)}-\pi_j|<C_{ij}\theta^{kn}$ by Proposition \ref{key2}. Now Proposition \ref{correlation} completes the proof.
\end{proof}

\subsection{Examples of $\Gamma<\textrm{Aut}(\mathcal{T})$ with WSG property}

First, we state and prove the fact about the largeness of the critical exponent $\delta_\Gamma$ of $\Gamma$ when $\Gamma\backslash\backslash \mathcal{T}$ has a cuspidal ray. This is due to the finiteness of the normalized Patterson density.

\begin{prop} Let $\mathcal{T}$ be a locally finite tree. Let $(\Gamma,\widetilde{F})$ be a pair of non-elementary full discrete subgroup $\Gamma<\textrm{Aut}(\mathcal{T})$ and a potential $\widetilde{F}$ for $\Gamma$ with $\left|m_{\Gamma,F}^{\nu^-,\nu^+}\right|<\infty$. If $\Gamma\backslash\backslash\mathcal{T}$ has at least one cuspidal ray (see Definition \ref{cuspidal}) with edges $e_1,e_2,e_3,\cdots$ and indices $r_1,r_2,r_3,\cdots$, then $$\delta_{\Gamma,F}\ge\frac{1}{2}\log\overline{\lim}_{n\to\infty} (c_n^{1/n})$$
where $c_n=(r_n-1)r_{n-1}r_{n-2}\cdots r_1e^{F(e_1)+F(\overline{e_1})+\cdots F(e_n)+F(\overline{e_n})}.$ Furthermore, if the tree $\mathcal{T}$ is regular or bi-regular and $F$ is constant, then the inequality is strict.
\end{prop}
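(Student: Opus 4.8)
The plan is to estimate the quotient $\lambda_F([e_n])/\lambda_F([e_1])$ along the cuspidal ray and to use that the finiteness of $|m_{\Gamma,F}^{\nu^-,\nu^+}|$ forces $\sum_n \lambda_F([e_n])$ to converge, which in turn controls the size of the shadow masses $\nu_{\partial_0\xi_n}^+(\mathcal{O}(\xi_n))$ and hence the critical exponent. Concretely, I would first use the transition probabilities $p_{e_j,e_k}$ computed in Proposition~\ref{3}, specialized to the ray: along a cuspidal ray the vertex $\partial_1 e_n$ has the edge $e_{n+1}$ going outward (with index $i(\overline{e_{n+1}}) = r_{n+1}$ in the relevant normalization) and the single return edge $\overline{e_n}$, with $i(e_n) = r_n$ so that $i(e_n) - 1 = r_n - 1$ return branches are available in $X_{(A,i)}$. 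Since the potential contributes the factor $e^{\widetilde F(e_n) - \delta}$ (resp.\ $e^{\widetilde F(\overline{e_n}) - \delta}$) at each step via the conformal relation $\nu_{\partial_1\xi_j}^+(\mathcal{O}_{\xi_k}) = \nu_{\partial_1\xi_k}^+(\mathcal{O}_{\xi_k})e^{\widetilde F(\xi_k)-\delta}$ used in that proof, iterating the recursion $\nu^+_{\partial_1\xi_j}(\mathcal{O}(\xi_j)) = \sum_{k}\rho_{e_j,e_k}\nu^+_{\partial_1\xi_j}(\mathcal{O}(\xi_k))$ down the ray yields, after telescoping, a lower bound of the shape
$$
\nu^+_{\partial_0\xi_1}(\mathcal{O}(\xi_1)) \;\ge\; (r_n - 1)\,r_{n-1}\cdots r_1\, e^{\int_{\partial_0\xi_1}^{\partial_0\xi_{n+1}}(\widetilde F - \delta)}\cdot\nu^+_{\partial_1\xi_n}(\mathcal{O}(\xi_{n+1})) \;+\;(\text{other nonnegative terms}),
$$
and similarly with $\widetilde F^-$ in place of $\widetilde F^+$. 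Multiplying the $+$ and $-$ estimates together and recognizing that the product of the two potential integrals over the ray-and-backtrack path is exactly $e^{F(e_1)+F(\overline{e_1})+\cdots+F(e_n)+F(\overline{e_n})}$, one gets $\lambda_F([e_1]) \ge c_n\, e^{-2n\delta}\cdot(\text{const}) \cdot \lambda_F([e_{n+1}])/(\text{something bounded below})$, i.e.\ $\lambda_F([e_{n+1}]) \le C\, c_n^{-1} e^{2n\delta}$ is \emph{not} what we want — rather the opposite: $c_n e^{-2n\delta}$ must stay bounded (summably small) because $\lambda_F([e_1])$ is a fixed finite number while the $\lambda_F([e_n])$ are nonnegative with bounded total mass. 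Hence $\limsup_n (c_n e^{-2n\delta})^{1/n} \le 1$, which rearranges to $\delta = \delta_{\Gamma,F} \ge \tfrac12\log\limsup_n c_n^{1/n}$.

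The second step is the routine-but-careful bookkeeping: writing down precisely how $\Gamma_{\xi_0,\ldots,\xi_{n-1}}$, the indices $i^A(e)$, and the normalization of the Patterson density along a \emph{cuspidal} ray interact. Here Definition~\ref{cuspidal} is what makes things clean: for $n$ large the stabilizers satisfy $\Gamma_{\xi_n}\le\Gamma_\omega$, so the group-theoretic factors $|\Gamma_{\xi_j,\xi_k}|$, $|\Gamma_{\xi_k}|$ stabilize and can be absorbed into the implied constant rather than growing with $n$; Proposition~\ref{split}(3) ($i(e_{j,1})=1$ at the junction with the core) also helps pin down the base case of the telescoping. I would also need the elementary fact that $\sum_{e\in E(\Gamma\backslash\backslash\mathcal{T})}\lambda_F([e]) = |m_{\Gamma,F}^{\nu^-,\nu^+}| < \infty$, which is immediate from the identification in Subsection~\ref{measure}, so in particular $\lambda_F([e_n])\to 0$ and the tail sum is finite — that is the only place finiteness of the Gibbs measure enters, and it is exactly what rules out $\delta$ being too small.

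For the strictness when $\mathcal{T}$ is regular or bi-regular and $F$ is constant: here $c_n = (r_n-1)r_{n-1}\cdots r_1$ with each $r_i\in\{q, q'\}$ say (and $F=0$), so $\limsup c_n^{1/n}$ is governed by the $r_i$'s along the ray. The point is that the inequality $\limsup(c_n e^{-2n\delta})^{1/n}\le 1$ cannot be an equality: if it were, $\sum_n c_n e^{-2n\delta}$ would behave like $\sum_n$ of a sequence whose $n$-th root tends to $1$, and one shows — using that for a \emph{regular} tree the Patterson density shadow masses along a cuspidal ray are comparable to $\lambda_F([e_n])$ up to multiplicative constants, hence $\lambda_F([e_n])\asymp c_n e^{-2n\delta}$ rather than merely $\lesssim$ — that the series $\sum\lambda_F([e_n])$ would diverge, contradicting $|m_{\Gamma,F}^{\nu^-,\nu^+}|<\infty$. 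So the inequality must be strict. I expect the main obstacle to be precisely this last point: upgrading the one-sided bound to a two-sided comparison $\lambda_F([e_n])\asymp c_n e^{-2n\delta}$ in the (bi-)regular case, which requires controlling the shadow masses $\nu^\pm$ from \emph{below} as well, using the regularity of the tree to guarantee no mass escapes into directions other than the ray. The non-regular, non-constant case genuinely only gives the non-strict inequality, which is why the hypothesis is needed there.
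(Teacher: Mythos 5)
Your overall strategy—peel the measure apart along the cuspidal ray and use finiteness to force a root-test bound on $c_ne^{-2n\delta}$—is the right spirit, but the quantitative step at the heart of your argument does not close. Your chain of estimates bottoms out in the inequality $\lambda_F([e_1])\ \ge\ \mathrm{const}\cdot c_n e^{-2n\delta}\,\lambda_F([e_{n+1}])$ (equivalently, with $\nu^+_{\partial_1\xi_n}(\mathcal{O}(\xi_{n+1}))$ on the right), and from this nothing follows about $c_ne^{-2n\delta}$, because $\lambda_F([e_{n+1}])\to 0$ (summably fast, as you yourself note). The sentence ``rather the opposite: $c_ne^{-2n\delta}$ must stay bounded because the $\lambda_F([e_n])$ are nonnegative with bounded total mass'' is precisely the missing lower bound $\lambda_F([e_n])\ \ge\ \mathrm{const}\cdot c_ne^{-2n\delta}$ with a uniform constant, which you never establish. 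The paper's proof supplies exactly this ingredient, and does so with $\nu^+$ alone: it decomposes the single shadow mass $\nu^+_{a_0}(\mathcal{O}(e_1))$ over the levels of the ray; the branches peeling off at level $n$ are (by fullness of $\Gamma$ and cuspidality, which controls the stabilizers) $\Gamma$-congruent to one fixed branch, so their mass seen from their own root is the fixed constant $x=\nu^+_{a_0}(\mathcal{O}(\overline{e_1}))$, nonzero because $\Gamma$ is non-elementary (no atoms)—a positivity point your proposal never addresses. Only the conformal cocycle varies with $n$, contributing $e^{F(e_1)+\cdots+F(e_n)-n\delta}$ for the outgoing transport and $e^{F(\overline{e_1})+\cdots+F(\overline{e_n})-n\delta}$ for moving the basepoint of the backward shadow; both families of potential terms come from the $\nu^+$ cocycle, so no multiplication of a ``$+$'' and a ``$-$'' estimate is needed. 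This gives $\sum_n c_n\,x\,e^{-2n\delta}\le\nu^+_{a_0}(\mathcal{O}(e_1))<\infty$, hence the stated inequality. Without a uniform positive constant in place of $\lambda_F([e_{n+1}])$, your argument yields nothing.

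The strictness part has a second gap: the inference ``$\limsup (c_ne^{-2n\delta})^{1/n}=1$ implies $\sum_n c_ne^{-2n\delta}$ diverges'' is false in general (take terms $1/n^2$), and your repair requires the two-sided comparison $\lambda_F([e_n])\asymp c_ne^{-2n\delta}$ that you yourself flag as the main obstacle. The paper needs neither: in the bi-regular tree with constant potential the indices along the ray make the shadow decomposition an exactly geometric series, $\nu^+_{a_0}(\mathcal{O}(e_1))=\bigl[(r-1)xe^{2(F-\delta)}+(s-1)rxe^{4(F-\delta)}\bigr]\sum_{n\ge 1}(rs)^ne^{4n(F-\delta)}$, and convergence of a geometric series forces its ratio to be strictly less than $1$, i.e. $\delta_{\Gamma,F}>F+\tfrac12\log\sqrt{rs}$. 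So the strict inequality comes from the exact geometric structure, not from a root-test-at-the-boundary argument.
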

\begin{figure}[H]
\centering\includegraphics[width=0.8\linewidth]{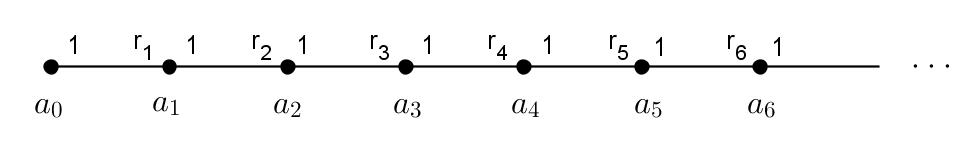}
\caption{Cuspidal ray with index $r_1,r_2,r_3,\cdots$}
\label{}
\end{figure}
\begin{proof}
Let $e_i$ be the edge with $\partial_0e_i=a_{i-1}$ and $\partial_1e_i=a_{i}$. Let $\nu^\pm$ be the Patterson density for $(\Gamma,\widetilde{F}^\pm)$ defined in Subsection \ref{density} and let $x=\nu^+_{a_0}(\mathcal{O}(\overline{e_1}))$. Since $\Gamma$ is non-elementary, $\nu^+$ has no atoms (cf. \cite{Bo}) and it follows that $x\ne 0$. Let $G\colon E\mathcal{T}\to\mathbb{R}$ be a $\Gamma$-invariant function given by $G(e)=F(e)+F(\overline{e})$. Because of the countably additivity and conformal property of $\nu^+$, we have
\begin{align*}
\nu^+_{a_0}(\mathcal{O}(e_1))
&=(r_1-1)\nu^+_{a_1}(\mathcal{O}(\overline{e_1}))e^{F(e_1)-\delta}+(r_2-1)r_1\nu^+_{a_2}(\mathcal{O}(\overline{e_1}))e^{F(e_1)+F(e_2)-2\delta}+\cdots \\
&=(r_1-1)xe^{G(e_1)-2\delta}+(r_2-1)r_1xe^{G(e_1)+G(e_2)-4\delta}+\cdots \\
&=\sum_{n=1}^{\infty}c_nxe^{-2n\delta}<\infty 
\end{align*}
which implies $\delta_{\Gamma,F}\ge\frac{1}{2}\log\overline{\lim}_{n\to\infty} (c_n^{1/n})$. If we assume further that $\mathcal{T}$ is a $(r+1,s+1)$ bi-regular tree $\mathcal{T}_{r+1,s+1}$ and $F$ is constant, then $$\nu^+_{a_0}(\mathcal{O}(e_1))= \left[(r-1)xe^{2(F-\delta)}+(s-1)rxe^{4(F-\delta)}\right]\sum_{n=1}^{\infty}(rs)^ne^{4n(F-\delta)},$$
so we obtain $\delta_{\Gamma,F}>F+\frac{1}{2}\log\sqrt{rs}.$
\end{proof}

In the rest of this subsection, we assume that the given potential $\widetilde{F}\colon E\mathcal{T}\to\mathbb{R}$ for $\Gamma<\textrm{Aut}(\mathcal{T})$ is constant. We give some examples of groups $\Gamma$ which have WSG property with these potentials.
\begin{exam} Let $\mathcal{T}_{r+1,s+1}$ be a $(r+1,s+1)$ bi-regular tree and let $\Gamma<\textrm{Aut}(\mathcal{T})$ be a geometrically finite full discrete subgroup with critical exponent $\delta_{\Gamma}$ for the zero potential. Then we have a decomposition $V(\Gamma\backslash\mathcal{T})=VD\cup VC_1\cup\cdots \cup VC_k\cup VF_1\cup\cdots\cup VF_l$ of the set of vertices as in propostion (\ref{split}) and there is a finite set $B\subseteq E(\Gamma\backslash\mathcal{T})$ such that $E(\Gamma\backslash\mathcal{T})=EB\cup EC_1\cup\cdots\cup EC_k\cup EF_1\cup\cdots EF_l$.

\begin{figure}[H]
\centering\includegraphics[width=0.8\linewidth]{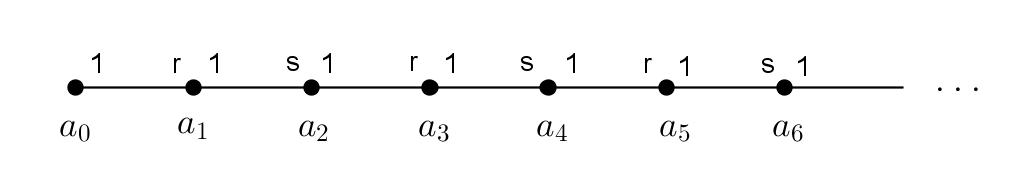}
\caption{Cuspidal ray of $\mathcal{T}_{r+1,s+1}$}
\label{tree}
\end{figure}

Let $\widetilde{F}$ be a given constant potential. Note that $F-\delta_{\Gamma,F}=-\delta_\Gamma$. For a cuspidal ray with the sequence of vertices $a_0,a_1,a_2,\cdots$, if we let $e_i$ be the oriented edge with $\partial_0e_i=a_{i-1}$ and $\partial_1e_i=a_i$, then 
$$p_{e_{2i-1}e_{2i}}=\frac{(s-1)re^{-2\delta_\Gamma}+(r-1)rse^{-4\delta_\Gamma}}{(r-1)+(s-1)re^{-2\delta_\Gamma}}<1\,,$$ $$p_{e_{2i}e_{2i+1}}=\frac{(r-1)se^{-2\delta_\Gamma}+(s-1)rse^{-4\delta_\Gamma}}{(s-1)+(r-1)se^{-2\delta_\Gamma}}<1\,,$$
$$p_{e_{i}\overline{e_{i}}}=1-p_{e_ie_{i+1}},$$ and $$p_{\overline{e_{i+1}}\,\overline{e_i}}=1.$$
Let $p=p_{e_{2i-1}e_{2i}}$ and $q=p_{e_{2i}e_{2i+1}}$. Define $t\colon E\mathcal{T}\to\mathbb{R}_{\ge 0}$ as follows: Choose $1<R<\sqrt[4]{pq}$ and $t_1$ sufficiently large. Let $t(\overline{e}_i)=R^{i}$ and $t(e_i)=t_i$ with
\begin{align*}
&t_{2i}=\frac{1}{p^iq^{i-1}R^{2i-1}}\left(t_1-(1-p)R^2\sum_{k=1}^i (pqR^4)^k-(1-q)pR^4\sum_{k=1}^{i-1} (pqR^4)^k\right) \\ &t_{2i+1}=\frac{1}{p^iq^{i}R^{2i}}\left(t_1-(1-p)R^2\sum_{k=1}^i (pqR^4)^k-(1-q)pR^4\sum_{k=1}^{i} (pqR^4)^k\right)
\end{align*} 
for $i\ge 1$. Then, we have $$pt(e_{2i})+(1-p)t(\overline{e_{2i-1}})=\frac{t(e_{2i-1})}{R},\,\, qt(e_{2i+1})+(1-q)t(\overline{e_{2i}})=\frac{t(e_{2i})}{R}\textrm{ and }t(\overline{e_{i-1}})=\frac{t(\overline{e_{i}})}{R},$$
so $\Gamma$ has WSG property with $(t,B,R^{-1})$.\hfill$////$

\end{exam}

Generalizing geometrically finite groups, we consider $\Gamma<\textrm{Aut}(\mathcal{T})$ for which a quotient graph of groups $\Gamma\backslash\backslash\mathcal{T}$ is a union of a finite graph of groups, finite rays and finite funnels. Consider the following ray (Firgure \ref{ray}). Let $e_i$ be the edge with $\partial_0e_i=a_{i-1}$ and $\partial_1e_i=a_{i}$.
Then, $$p_{e_ie_{i+1}}=\frac{|\Gamma_{\xi_i}|\nu^+_{a_i}(\mathcal{O}_{e_{i+1}})}{|\Gamma_{\xi_i,\xi_{i+1}}|\nu^+_{a_i}(\mathcal{O}_{e_{i}})}=\frac{s_{i}\cdot\nu^+_{a_i}(\mathcal{O}_{e_{i+1}})}{\nu^+_{a_i}(\mathcal{O}_{e_{i}})}.$$

\begin{figure}[H]
\centering\includegraphics[width=0.8\linewidth]{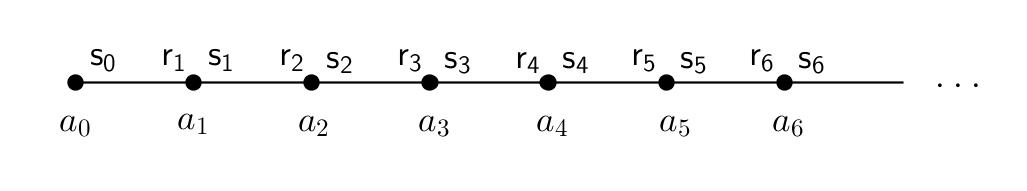}
\caption{General ray}
\label{ray}
\end{figure}

\begin{exam}
First, consider the case $s_i=2$ and $r_i=q-1$. Let $\Gamma<\textrm{Aut}(\mathcal{T})$ be a full discrete subgroup such that the decomposition of quotient graph of groups $\Gamma\backslash\backslash\mathcal{T}$ consists of a finite graph of groups $D$, finite funnels, and finite rays of type $(2,q-1)$. 
This is a quotient of a $(q+1)$-regular tree, but not geometrically finite. There is $\alpha>0$ such that 
$$\alpha<p_{e_i,e_{i+1}}<1-\alpha\textrm{ and }\alpha<p_{\overline{e_{i+1}},\overline{e_{i}}}<1-\alpha.$$

Choose a sequence $t_i\ge 1$ satisfying the following conditions: there exists $0<\rho<1$ such that for any $i=0,1,2,\cdots$,
\begin{align*}
(1) \,\, & p_{e_i,e_{i+1}}t_{i+1}+(1-p_{e_i,e_{i+1}})t_{i-1}\le t_i\rho; \textrm{ and} \\
(2) \,\, & p_{\overline{e_{i+1}},\overline{e_i}}t_{i-1}+(1-p_{\overline{e_{i+1}},\overline{e_i}})t_{i+1}\le t_i\rho.
\end{align*} 
Then, $\Gamma$ has WSG property with $(t,ED,\rho)$. \hfill$////$
\end{exam}

\begin{figure}[H]
\centering\includegraphics[width=0.75\linewidth]{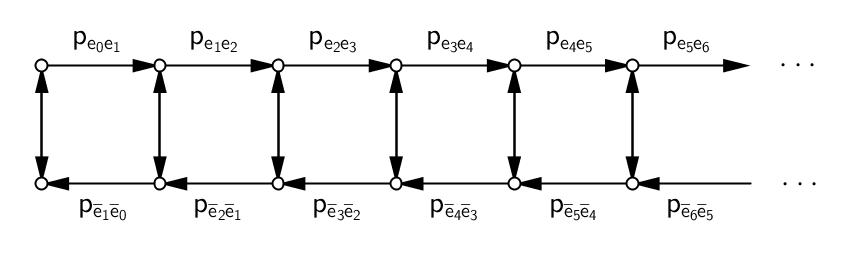}
\caption{Markov chain associated to the ray}
\label{planar}
\end{figure}

\begin{exam}
Now we consider the case that the ray of groups itself is not even an \emph{expander diagram}. The most famous example is appeared in \cite{BeLu}. In this example, for each $i$, either $r_i=1$ or $q$ and $s_i=q+1-r_i$. Although the regular representation of $\textrm{Aut}(\mathcal{T})$ into $L^2(\Gamma\backslash \textrm{Aut}(\mathcal{T}))$ has no spectral gap (see \cite{BeLu}), we have the exponential mixing property of the geodesic 2-translation map $\phi^{\circ 2}$. Indeed, as in the previous example, we can choose a sequence $t_i$ for $i\ge 0$ such that
\begin{align*}
(1) \,\, & p_{e_i,e_{i+1}}t_{i+1}+(1-p_{e_i,e_{i+1}})t_{i-1}\le t_i\rho; \textrm{ and} \\
(2) \,\, & p_{\overline{e_{i+1}},\overline{e_i}}t_{i-1}+(1-p_{\overline{e_{i+1}},\overline{e_i}})t_{i+1}\le t_i\rho.
\end{align*} 
Now we can apply Lemma (\ref{lemma}) and Proposition (\ref{key2}). \hfill$////$

\end{exam}


\section{Decay of matrix coefficient of algebraic groups and effective counting of discrete points}\label{five}

Before we proceed further, let us summarize here the results of the previous sections. Let us assume that a pair $(\Gamma,\widetilde{F})$ of a full discrete subgroup $\Gamma<\textrm{Aut}(\mathcal{T})$ and a potential $\widetilde{F}$ for $\Gamma$ has WSG property; for instance, let $\Gamma$ be a geometrically finite discrete subgroup with given a constant potential. If we assume that the length spectrum of $\Gamma$ is not arithmetic, then the geodesic translation map is \emph{exponentially} mixing. This is not a strong assumption since if we consider the restricted dynamical system $(\Gamma\backslash S_o^k\mathcal{T},\phi^{\circ k},m_{\Gamma,F}^{\nu^-,\nu^+})$ for $k=|\mathbb{Z}/L_\Gamma|$, then this is always true.
More precise statement is the following:

\bigskip
Let $\mathcal{T}$ be an arbitrary locally finite tree. If $(\Gamma,\widetilde{F})$ has WSG property and the length spectrum of $\Gamma$ is not arithmetic, then for any $f,g\in C_c(\Gamma\backslash S\mathcal{T})$, there is a constant $\kappa_1>0$ such that
$$\int _{\Gamma\backslash S\mathcal{T}}(f\circ\phi_\Gamma^{\circ n})\cdot gdm_{\Gamma,F}^{\nu^-,\nu^+}=\frac{1}{|m_{\Gamma,F}^{\nu^-,\nu^+}|}m_{\Gamma,F}^{\nu^-,\nu^+}(f)m_{\Gamma,F}^{\nu^-,\nu^+}(g)+O(e^{-\kappa_1 n}).$$

In this section, we focus on the case when $G<\textrm{Aut}(\mathcal{T})$ is an $\mathbb{F}$-points of an algebraic group $\underline{G}$ over a local field $\mathbb{F}$ and $\Gamma$ is a non-elementary discrete subgroup of $G$. Thus, $\mathcal{T}$ is a $(q^d+1,q^{d'}+1)$-bi-regular tree. In this case the geodesic 2-translation map $\phi^{\circ 2}$ is exponentially mixing on $\Gamma\backslash S_o^2\mathcal{T}$ for every $o\in V\mathcal{T}$. We want to translate the exponential mixing of $\phi_\Gamma^{\circ 2}$ with respect to $m_\Gamma^{\textrm{BM}}$ into the effective decay of matrix coefficient of the regular representation of $G$ on $L^2(\Gamma\backslash G)$. Furthermore, this allows us to count discrete orbit points in the Bruhat-Tits tree $\mathcal{T}$ of $G$ effectively.

\subsection{Equidistribution of $\phi_*^{\circ n}\nu_\mathcal{H}^{\textrm{Leb}}$}

In \cite{OS}, the authors gave the series of following propositions in the case of hyperbolic space $\mathbb{H}^n$. For the effective version of these, we refer \cite{MO} and \cite{OW}. Following directly their arguments, we get analogous results about the effective equidistribution of expanding horospheres in $\mathcal{T}$. 

We assume throughout this section that $L_\Gamma=2\mathbb{Z}$ and the given potential $\widetilde{F}$ is constant. 
Let $$h_\mathcal{T}=\lim_{n\to \infty} \frac{\log \#\{x\in V\mathcal{T}\,|\,d(x,o)\le n\}}{n}$$ be the \emph{volume entropy} (which does not depend on the chioice of $o\in V\mathcal{T}$) of $\mathcal{T}$. Let $\{\nu_x\}$ and $\{m_x\}$ be the $\delta_\Gamma$-dimensional Patterson-Sullivan density and the $h_\mathcal{T}$-dimensional Lebesgue density, respectively. We denote by 
\begin{align}\label{measures}
m_\Gamma^{\textrm{BM}}=m^{\nu,\nu},\quad m_\Gamma^{\textrm{BR}}=m^{m,\nu},\quad m^{\textrm{BR}}_{\ast,\Gamma}=m^{\nu,m}\quad\textrm{and}\quad m^{\textrm{Haar}}=m^{m,m}.
\end{align}
For each unstable horosphere $\widetilde{\mathcal{H}}$ in $S\mathcal{T}$, we consider the following locally finite Borel measures on $\widetilde{\mathcal{H}}$:
$$d\mu_{\widetilde{\mathcal{H}}}^{\textrm{Leb}}(\xi)=e^{h_\mathcal{T}\beta_{\xi^+}(o,\xi_0)}dm_o(\xi^+),\quad d\mu_{\widetilde{\mathcal{H}}}^{\textrm{PS}}(\xi)=e^{\delta_\Gamma\beta_{\xi^+}(o,\xi_0)}d\nu_o(\xi^+)$$ for some $o\in V\mathcal{T}$.
The measures $\mu_{\widetilde{\mathcal{H}}}^{\textrm{Leb}}$ and $\mu_{\widetilde{\mathcal{H}}}^{\textrm{PS}}$ are invariant under $\Gamma_{\widetilde{\mathcal{H}}}=\textrm{Stab}_\Gamma (\widetilde{\mathcal{H}})$ and hence induce measures on $\mathcal{H}=\Gamma_{\widetilde{\mathcal{H}}}\backslash\widetilde{\mathcal{H}}$.


The following proposition gives the equidistribution of the measure $\mu_{\widetilde{\mathcal{H}}}^{\textrm{PS}}$ toward $m_\Gamma^{\textrm{BM}}$.

\begin{prop} For any $f\in C_c(\Gamma\backslash S\mathcal{T})$ and $g\in L^1(\mathcal{H},\nu_{\mathcal{H}}^{\textrm{PS}})$, there is a constant $\kappa_2>0$ such that
$$\int_\mathcal{H} (f\circ\phi_\Gamma^{\circ 2n})\cdot gd\mu_\mathcal{H}^{\textrm{PS}}=\frac{\mu_\mathcal{H}^{\textrm{PS}}(g)}{|m^{\textrm{BM}}_\Gamma|}m^{\textrm{BM}}(f)+O(e^{-\kappa_2n})$$
with the implied constant depends only on $f$ and $g$.
\end{prop}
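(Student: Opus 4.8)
The plan is to reduce the claimed equidistribution statement to the exponential mixing theorem for $\phi^{\circ 2}$ proved in Section 4, exactly following the thickening argument of \cite{OS} made effective as in \cite{MO}, \cite{OW}. First I would reduce to the case where $g$ is the characteristic function of a small relatively compact ``box'' $B$ inside $\mathcal{H}$, since any $g\in L^1(\mathcal{H},\mu_\mathcal{H}^{\mathrm{PS}})$ can be approximated in $L^1$ by finite linear combinations of such indicators and the error in the approximation is controlled uniformly by $\mu_\mathcal{H}^{\mathrm{PS}}(g)$ and by $\|f\|_\infty$; on a tree these boxes are literally shadows of edges at level $2n$, so the combinatorics is clean. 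For such a box, I would let $\widetilde{B}$ be a lift and consider its $\epsilon$-neighborhood $B_\epsilon$ inside $\Gamma\backslash S\mathcal{T}$ obtained by flowing the transverse (contracting-horospherical and flow) directions by a small amount; because $\mathcal{T}$ is simplicial and bi-regular, the ``$\epsilon$-neighborhood'' is really a union of finitely many cylinder sets, and $m^{\mathrm{BM}}_\Gamma(B_\epsilon)$ is comparable to $\epsilon^{\,?}\cdot\mu_\mathcal{H}^{\mathrm{PS}}(B)$ with an explicit constant coming from the product structure $d m^{\mathrm{BM}}_\Gamma = d\mu^{\mathrm{PS}}\,d\mu^{\mathrm{PS},-}\,ds$.

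The core step is then the standard unfolding: I would write
$$\int_\mathcal{H}(f\circ\phi_\Gamma^{\circ 2n})\cdot \mathbb{1}_{B}\,d\mu_\mathcal{H}^{\mathrm{PS}}
\approx \frac{1}{m^{\mathrm{BM}}_\Gamma(B_\epsilon)}\int_{\Gamma\backslash S\mathcal{T}}(f\circ\phi_\Gamma^{\circ 2n})\cdot \mathbb{1}_{B_\epsilon}\,dm^{\mathrm{BM}}_\Gamma,$$
where the error in this approximation has two sources: the variation of $f\circ\phi^{\circ 2n}$ across the contracting/flow directions of $B_\epsilon$, which is $O(e^{-cn})$ because those directions are exponentially contracted by the forward flow (this is where I pick $\epsilon=\epsilon(n)$ going to $0$ at a controlled rate, say $\epsilon(n)=e^{-\kappa n}$), and the variation of $f$ across the unstable $B_\epsilon$-thickening transverse to $\mathcal{H}$, which is $O(\epsilon(n))$ times a Lipschitz-type modulus of $f$. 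On the simplicial tree $f\in C_c$ is automatically locally constant, so for $n$ large enough past the support scale these ``moduli'' are literally $0$ and the approximation is exact once $\epsilon(n)$ is below the relevant scale; this is a genuine simplification over the manifold case. Applying the exponential mixing theorem of Section 4 (the boxed statement, or Theorem~\ref{main} with $k=2$) to the pair $(f,\mathbb{1}_{B_\epsilon})$ gives
$$\frac{1}{m^{\mathrm{BM}}_\Gamma(B_\epsilon)}\int(f\circ\phi_\Gamma^{\circ 2n})\cdot\mathbb{1}_{B_\epsilon}\,dm^{\mathrm{BM}}_\Gamma
=\frac{m^{\mathrm{BM}}(f)}{|m^{\mathrm{BM}}_\Gamma|}+O\!\left(\frac{\theta^{n}}{m^{\mathrm{BM}}_\Gamma(B_\epsilon)}\right),$$
and since $m^{\mathrm{BM}}_\Gamma(B_\epsilon)\asymp \epsilon(n)^{\,r}\mu_\mathcal{H}^{\mathrm{PS}}(B)$ decays only geometrically in $n$ as well, I choose $\kappa$ small enough that $\theta^n/\epsilon(n)^{r}=O(e^{-\kappa_2 n})$ for some $\kappa_2>0$; together with $m^{\mathrm{BM}}(f)/|m^{\mathrm{BM}}_\Gamma|\cdot \mu_\mathcal{H}^{\mathrm{PS}}(B)$ being the claimed main term, this finishes the indicator case, and summing over the box decomposition of $g$ finishes the general case.

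The main obstacle I anticipate is bookkeeping the two competing scales: the mixing error $\theta^n$ pushes $\epsilon(n)\to 0$ slowly, while the thickening error forces $\epsilon(n)\to 0$; one must verify these are compatible, i.e.\ that there is a window of rates $\epsilon(n)=e^{-\kappa n}$ with $0<\kappa<$ (something depending on $\theta$, $\delta_\Gamma$, $h_\mathcal{T}$) for which both errors are $O(e^{-\kappa_2 n})$ simultaneously, and to check that the implied constants depend only on $f$ and $g$ (through $\|f\|_\infty$, the support of $f$, and $\mu_\mathcal{H}^{\mathrm{PS}}(g)$) and not on the box decomposition --- this uses that the comparison constant in $m^{\mathrm{BM}}_\Gamma(B_\epsilon)\asymp \epsilon^{r}\mu_\mathcal{H}^{\mathrm{PS}}(B)$ is uniform over boxes, which in turn rests on bi-regularity of $\mathcal{T}$ and the $\Gamma$-conformality of $\nu$. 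A secondary point requiring a little care is that $g\in L^1(\mathcal{H},\mu^{\mathrm{PS}}_\mathcal{H})$ need not be compactly supported, so the $L^1$-approximation by finitely many boxes must be accompanied by a tail bound; this is where $f\in C_c$ enters, giving $\|f\|_\infty<\infty$ so that the contribution of the tail of $g$ is $\leq \|f\|_\infty\cdot(\text{$\mu^{\mathrm{PS}}$-mass of the tail})$, uniformly in $n$.
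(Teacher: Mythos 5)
Your argument is essentially the paper's: the paper gives no independent proof of this proposition but derives it by ``following directly'' the Oh--Shah thickening/unfolding argument (in its effective form from Mohammadi--Oh and Oh--Winter) applied on top of the exponential mixing of $\phi^{\circ 2}$ from Section 4, which is exactly your reduction. The only cosmetic difference is that, since $f\in C_c$ is locally constant on the simplicial tree, the transversal thickening can be kept at a fixed small scale, so the power-law comparison $m^{\mathrm{BM}}_\Gamma(B_\epsilon)\asymp\epsilon^{r}\mu^{\mathrm{PS}}_{\mathcal{H}}(B)$ with shrinking $\epsilon(n)$ is not needed and your two-scale bookkeeping collapses to the single mixing rate.
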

\begin{remark}\label{skinning} In general, when the potential $F$ is not constant, this statement is still true. We can also consider the generalized \emph{skinning measure} $d\sigma_{\widetilde{\mathcal{H}}}(\xi)=e^{C_{F,\xi^+}(o,\xi_0)}d\nu_o^+(\xi^+)$ and this equidistributes towards $m_{\Gamma,F}^{\nu^-,\nu^+}$ when pushed by the geodesic translation map (\cite{BPP}).
\end{remark}
\begin{thm} Let $f\in C_c(\Gamma\backslash S\mathcal{T})$ and $g\in C_c(\mathcal{H})$. Then there is a constant $\kappa_3>0$ such that
$$e^{(h_\mathcal{T}-\delta_\Gamma)n}\int_\mathcal{H} (f\circ\phi_\Gamma^{\circ 2n})\cdot gd\mu_\mathcal{H}^{\textrm{Leb}}=\frac{\mu_\mathcal{H}^{\textrm{PS}}(g)}{|m^{\textrm{BM}}_\Gamma|}m^{\textrm{BR}}(f)+O(e^{-\kappa_3n}).$$
The implied constant depends only on $f$ and $g$.
\end{thm}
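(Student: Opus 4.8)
The plan is to transfer to the tree setting the argument of \cite{OS}, in the effective form of \cite{MO} and \cite{OW}, taking as a black box the preceding Proposition (the effective equidistribution of $\mu_\mathcal{H}^{\textrm{PS}}$ toward $m^{\textrm{BM}}_\Gamma$) and the fact that, since $L_\Gamma=2\mathbb{Z}$, the map $\phi_\Gamma^{\circ 2}$ is exponentially mixing for $m^{\textrm{BM}}_\Gamma$. Since finite linear combinations of indicators of cylinders are dense in $C_c(\Gamma\backslash S\mathcal{T})$ and indicators of shadows are dense in $C_c(\mathcal{H})$, and all the measures in sight are Radon, it suffices to prove the estimate for such indicators, the approximation error being reabsorbed at the end against the geometric rate. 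First I would fix a small scale $\epsilon>0$ and partition the part of the unstable horosphere $\widetilde{\mathcal{H}}$ carrying the mass of $g$ into tree-cells $\{\xi\in\widetilde{\mathcal{H}}\,:\,\xi^+\in\mathcal{O}(e)\}$, where $e$ ranges over edges at depth $\asymp 2n+\log(1/\epsilon)$ on the $\xi^+$ side of the base of $\widetilde{\mathcal{H}}$; the point of this choice is that $\phi_\Gamma^{\circ 2n}$ re-bases $\widetilde{\mathcal{H}}$ by $2n$ steps toward $\xi^+$, so each such cell is pushed to a piece of the re-based unstable horosphere of visual size $\asymp\epsilon$, on which $f$ varies only within its modulus of continuity at scale $\epsilon$.

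The heart of the argument is the comparison, on each cell, of the Lebesgue density $d\mu_{\widetilde{\mathcal{H}}}^{\textrm{Leb}}(\xi)=e^{h_\mathcal{T}\beta_{\xi^+}(o,\xi_0)}dm_o(\xi^+)$ with the Patterson density $d\mu_{\widetilde{\mathcal{H}}}^{\textrm{PS}}(\xi)=e^{\delta_\Gamma\beta_{\xi^+}(o,\xi_0)}d\nu_o(\xi^+)$. Because $\mathcal{T}$ is bi-regular, $m_o$ is genuinely $h_\mathcal{T}$-Ahlfors regular, $m_o(\mathcal{O}(e))\asymp e^{-h_\mathcal{T}\operatorname{depth}(e)}$ with bounded multiplicative constants, whereas $\nu_o$ is only $\delta_\Gamma$-conformal. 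Feeding this into the conformality of the two densities, the $\mu^{\textrm{Leb}}$-mass of a depth-$k$ cell is comparable to $e^{-(h_\mathcal{T}-\delta_\Gamma)k}$ times its $\mu^{\textrm{PS}}$-mass, away from the parabolic points of $\Gamma$. Summing over cells and using that $f\circ\phi_\Gamma^{\circ 2n}$ is nearly constant on each of them, I would rewrite $e^{(h_\mathcal{T}-\delta_\Gamma)n}\int_\mathcal{H}(f\circ\phi_\Gamma^{\circ 2n})\,g\,d\mu_\mathcal{H}^{\textrm{Leb}}$, up to controlled errors, as a $\mu^{\textrm{PS}}$-weighted horospherical average at a nearby time, to which the preceding Proposition applies and produces the main term $\frac{\mu_\mathcal{H}^{\textrm{PS}}(g)}{|m^{\textrm{BM}}_\Gamma|}m^{\textrm{BR}}(f)$ together with a geometric error. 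In this bookkeeping the normalization $e^{(h_\mathcal{T}-\delta_\Gamma)n}$ is exactly the ratio between the ``uniform'' ($h_\mathcal{T}$-entropy) growth of the flowed horosphere and the ``Patterson'' ($\delta_\Gamma$) growth that drives the equidistribution; when $\Gamma$ is a lattice the factor is $1$ and the statement degenerates to the Proposition.

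Two error terms must be controlled, and the second is where the real work lies. The oscillation of $f$ on $\epsilon$-cells and the passage to locally constant functions contribute an error that, after optimizing $\epsilon$ against the geometric rate from the Proposition, still leaves a geometric rate $O(e^{-\kappa_3 n})$; this step is routine once one notes that a compactly supported function on $\Gamma\backslash S\mathcal{T}$ is uniformly approximated by functions constant on deep cylinders. The genuine obstacle is the \emph{escape of mass into the cusps}: at $\Gamma$-parabolic points $m_o$ and $\nu_o$ are no longer comparable, and for directions $\xi^+$ outside the limit set $\mu_{\widetilde{\mathcal{H}}}^{\textrm{PS}}$ vanishes while $\mu_{\widetilde{\mathcal{H}}}^{\textrm{Leb}}$ does not, so the cell-by-cell comparison breaks down there. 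One must therefore show that the fraction of the $\mu^{\textrm{Leb}}$-mass of $\phi_\Gamma^{\circ 2n}\widetilde{\mathcal{H}}$ supported, at time $2n$, in a neighborhood of the cusps (of size chosen suitably in terms of $n$) is exponentially small. I would establish this from the finiteness of $m^{\textrm{BM}}_\Gamma$ together with the effective mixing of Theorem~\ref{main}, in the spirit of the non-divergence estimates of \cite{MO}: the WSG function $t$ bounds the recurrence of the induced Markov chain to a finite set, hence the time the pushed horosphere spends near the cusp, and this exponential recurrence estimate is precisely what kills the singular part of the Lebesgue--Patterson comparison and upgrades the convergence to the claimed rate $O(e^{-\kappa_3 n})$.
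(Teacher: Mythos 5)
Your overall plan (reduce to the preceding Proposition, control oscillation at a small scale, optimize against the geometric rate) is the right shape, and indeed the paper itself offers no written proof here but defers to the arguments of \cite{OS}, \cite{MO}, \cite{OW}. However, the mechanism at the heart of your sketch is not the one those arguments use, and as stated it fails. Your key step is a cell-by-cell comparison ``$\mu_{\mathcal{H}}^{\textrm{Leb}}$-mass of a depth-$k$ cell $\asymp e^{-(h_\mathcal{T}-\delta_\Gamma)k}\times\mu_{\mathcal{H}}^{\textrm{PS}}$-mass,'' followed by an appeal to the PS-equidistribution Proposition. When $\Gamma$ is not a lattice, $\delta_\Gamma<h_\mathcal{T}$ and the visual measure $m_o$ and the Patterson density $\nu_o$ are mutually singular: $m_o(\Lambda_\Gamma)=0$, so $m_o$-almost every cell has \emph{zero} $\mu^{\textrm{PS}}$-mass while carrying positive $\mu^{\textrm{Leb}}$-mass. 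These cells are not an error set that you can discard: they carry essentially all of the Lebesgue mass of the pushed horosphere and they are exactly what produces the main term $m^{\textrm{BR}}_\Gamma(f)$ (recall $m^{\textrm{BR}}_\Gamma=m^{m,\nu}$ is Lebesgue in the forward direction). Your proposed remedy --- showing that the Lebesgue mass near the cusps is exponentially small --- addresses cusp excursions but not this mutual singularity, and the singular part is of full $\mu^{\textrm{Leb}}$-measure, not exponentially small. There is also a second structural problem: even on cells meeting $\Lambda_\Gamma$ where a shadow-lemma comparison is available, a two-sided comparability with unspecified multiplicative constants can only give upper and lower bounds; it cannot produce the exact coefficient $\mu_\mathcal{H}^{\textrm{PS}}(g)\,m^{\textrm{BR}}_\Gamma(f)/|m^{\textrm{BM}}_\Gamma|$, and if you literally replace $\mu^{\textrm{Leb}}$ by comparable multiples of $\mu^{\textrm{PS}}$ and apply the Proposition you land on $m^{\textrm{BM}}_\Gamma(f)$, not $m^{\textrm{BR}}_\Gamma(f)$.

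The argument the paper is invoking (Roblin \cite{ro}, Oh--Shah \cite{OS} Section 5, and effectively \cite{MO}) never compares the two horospherical measures pointwise. Instead it exploits the local product structure: $m^{\textrm{BR}}_\Gamma$ disintegrates over small boxes with Lebesgue conditionals along unstable plaques and the \emph{same} transverse measure (Patterson density times counting measure in the flow direction) as $m^{\textrm{BM}}_\Gamma$. One replaces $f$ by the auxiliary function obtained by averaging $f$ over small unstable plaques against $\mu^{\textrm{Leb}}$ (an unstable ``thickening''), applies the preceding Proposition (PS-equidistribution toward $m^{\textrm{BM}}_\Gamma$) to that auxiliary function, and uses that the $m^{\textrm{BM}}$-integral of the thickened function reproduces $m^{\textrm{BR}}_\Gamma(f)$ up to the plaque size; the normalization $e^{(h_\mathcal{T}-\delta_\Gamma)\cdot(\textrm{time})}$ comes from the Jacobian mismatch $e^{h_\mathcal{T}t}$ versus $e^{\delta_\Gamma t}$ when pushing $\mu^{\textrm{Leb}}$ versus $\mu^{\textrm{PS}}$ along the geodesic translation, not from comparing cell masses. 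In the tree setting this thickening is particularly clean because shadows of edges give an exact partition; if you redo your second paragraph along these lines (and keep your final paragraph's cusp estimate only as the non-divergence input needed to make the plaque averages uniform), the proof goes through with the claimed exponential rate.
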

\begin{thm} Let $f,g\in C_c(\Gamma\backslash S\mathcal{T})$. Then there is a constant $\kappa_4>0$ such that
$$e^{(h_\mathcal{T}-\delta_\Gamma)n} \int_{\Gamma\backslash S\mathcal{T}} f\cdot(g\circ \phi_\Gamma^{\circ 2n}) dm_\Gamma^{\textrm{Haar}}= \frac{1}{|m_\Gamma^{\textrm{BM}}|}\int f dm_{*,\Gamma}^{\textrm{BR}}\int g dm_\Gamma^{\textrm{BR}}+O(e^{-\kappa_4n})$$
with the implied constant depends only on $f$ and $g$.
\end{thm}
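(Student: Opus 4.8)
The plan is to obtain this Haar‑mixing statement one rung further up the same ladder that produced the preceding results: disintegrate $m_\Gamma^{\textrm{Haar}}=m^{m,m}$ along the foliation of $\Gamma\backslash S\mathcal{T}$ by (images of) expanding horospheres, apply the already‑established effective equidistribution of the Lebesgue horospherical measures $\mu_\mathcal{H}^{\textrm{Leb}}$ under $\phi_\Gamma^{\circ 2n}$ fibrewise, and then integrate the resulting fibrewise asymptotics over the transversal. In the coordinates $\xi\mapsto(\xi^-,\xi^+,s)$ of Subsection \ref{density}, an expanding horosphere $\widetilde{\mathcal{H}}$ is a leaf on which $\xi^+$ varies while $(\xi^-,s)$ is fixed, and $d\widetilde m^{m,m}$ factors along this foliation as the leaf measure $d\mu_{\widetilde{\mathcal{H}}}^{\textrm{Leb}}(\xi)=e^{h_\mathcal{T}\beta_{\xi^+}(o,\xi_0)}\,dm_o(\xi^+)$ against the transversal measure $d\tau=e^{h_\mathcal{T}\beta_{\xi^-}(o,\xi_0)}\,dm_o(\xi^-)\,ds$, the $\Gamma$‑stabilizers being absorbed in the usual way when passing to the quotient. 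The one piece of bookkeeping beyond this is the identity $\int\mu_{\widetilde{\mathcal{H}}}^{\textrm{PS}}(\cdot)\,d\tau=m_{*,\Gamma}^{\textrm{BR}}$, which follows by comparing the conformal‑density product structures of $m_{*,\Gamma}^{\textrm{BR}}$ and of $\mu^{\textrm{PS}}$; it is $m_{*,\Gamma}^{\textrm{BR}}$ rather than $m_\Gamma^{\textrm{BR}}$ that appears here, the asymmetry reflecting that $g$ is translated forward while $f$ is not and being governed by the geodesic inversion $\iota$ which exchanges the two Burger–Roblin measures.

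With these in hand, for fixed $n$ one writes
$$\int_{\Gamma\backslash S\mathcal{T}} f\cdot(g\circ\phi_\Gamma^{\circ 2n})\,dm_\Gamma^{\textrm{Haar}}=\int\Big(\int_{\mathcal{H}} f\cdot(g\circ\phi_\Gamma^{\circ 2n})\,d\mu_{\mathcal{H}}^{\textrm{Leb}}\Big)\,d\tau,$$
and applies the preceding Theorem (effective equidistribution of $\mu_\mathcal{H}^{\textrm{Leb}}$ under $\phi_\Gamma^{\circ 2n}$ towards $m_\Gamma^{\textrm{BR}}$) to the inner integral, with $g$ in the role of the translated function and $f|_\mathcal{H}$ in the role of the weight. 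After a routine cut‑off of $f$ making its restriction to each leaf compactly supported, this gives, uniformly over the relatively compact part of the transversal met by $\operatorname{supp}f$,
$$e^{(h_\mathcal{T}-\delta_\Gamma)n}\int_{\mathcal{H}} f\cdot(g\circ\phi_\Gamma^{\circ 2n})\,d\mu_{\mathcal{H}}^{\textrm{Leb}}=\frac{m_\Gamma^{\textrm{BR}}(g)}{|m_\Gamma^{\textrm{BM}}|}\,\mu_{\mathcal{H}}^{\textrm{PS}}(f|_\mathcal{H})+O(e^{-\kappa_3n}).$$
Integrating $d\tau$ over that compact region, whose $\tau$‑mass is finite because $f\in C_c$, the error contributes $O(e^{-\kappa_3n})$ while the main term becomes $\frac{m_\Gamma^{\textrm{BR}}(g)}{|m_\Gamma^{\textrm{BM}}|}\int\mu_{\mathcal{H}}^{\textrm{PS}}(f)\,d\tau=\frac{1}{|m_\Gamma^{\textrm{BM}}|}\,m_{*,\Gamma}^{\textrm{BR}}(f)\,m_\Gamma^{\textrm{BR}}(g)$ by the identity above; one then takes any $0<\kappa_4\le\kappa_3$, the implied constant absorbing the support‑dependent constants and hence depending only on $f$ and $g$.

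The hard part is the analytic content of this fibrewise‑then‑integrate step, exactly as in the real‑hyperbolic case treated in \cite{OS} and \cite{MO}: a function $f\in C_c(\Gamma\backslash S\mathcal{T})$ need not restrict to a compactly supported function on a leaf $\mathcal{H}$, and $\mu_\mathcal{H}^{\textrm{PS}}$ may have infinite total mass, so one must couple an approximation argument with a tail estimate bounding the $\mu_\mathcal{H}^{\textrm{Leb}}$‑mass of $\operatorname{supp}g$ that is dragged far out under the inverse translation, and must check that the rate $\kappa_3$ of the preceding theorem can be taken uniform in the transversal parameter over the compact region in play. An equivalent route — organizing the whole argument as a single two‑sided thickening directly from the exponential mixing of $m_\Gamma^{\textrm{BM}}$ (Proposition \ref{correlation} after the reduction to $L_\Gamma=2\mathbb{Z}$) — runs into the same uniformity issue and is no shorter.
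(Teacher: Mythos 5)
Your proposal follows essentially the same route as the paper: the paper gives no independent argument for this theorem but obtains it, exactly as you do, by the Oh--Shah/Mohammadi--Oh scheme of disintegrating $m_\Gamma^{\textrm{Haar}}$ along the unstable horospherical foliation, applying the preceding effective equidistribution of $\mu_\mathcal{H}^{\textrm{Leb}}$ leafwise, and integrating against the transversal measure, which yields the pairing $\frac{1}{|m_\Gamma^{\textrm{BM}}|}\,m_{*,\Gamma}^{\textrm{BR}}(f)\,m_\Gamma^{\textrm{BR}}(g)$. The technical points you flag (leafwise compact support, tail estimates, uniformity of the rate over the compact part of the transversal) are precisely the ones the paper delegates to the arguments of \cite{OS} and \cite{MO}, so your sketch is consistent with its proof.
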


\begin{coro} Let $\underline{G}$ be a semi-simple $\mathbb{F}$-rank 1 algebraic group over $\mathbb{F}$ and $G=\underline{G}(\mathbb{F})$. Suppose that the Bruhat-Tits tree $\mathcal{T}$ of $G$ is $(q^d+1,q^{d'}+1)$-biregular and $\Gamma$ is a discrete subgroup of $G$ with $|m_{\Gamma}^{\textrm{BM}}|$ $<\infty$. Let $M<G$ be the stabilizer of a fixed bi-infinite geodesic line $\xi\in S\mathcal{T}$. Then there exists $\kappa>0$ such that for any $\Phi_1,\Phi_2\in C_c(\Gamma\backslash G/M)$, we have
$$\frac{q^{(d+d')j}}{e^{2\delta_\Gamma j}}\langle a_{2j}\cdot\Phi_1,\Phi_2\rangle= \frac{m^{\textrm{BR}}_\Gamma(\Phi_1)\cdot m^{\textrm{BR}}_{\ast,\Gamma} (\Phi_2)}{|m_{\Gamma}^{\textrm{BM}}|}+O(e^{-\kappa j}).$$ 
\label{Roblin2}
\end{coro}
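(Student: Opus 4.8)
The plan is to translate Corollary \ref{Roblin2} into the already-established mixing statement of the preceding Theorem on $m_\Gamma^{\textrm{Haar}}$, via the standard unfolding that relates matrix coefficients on $\Gamma\backslash G$ to integrals over $\Gamma\backslash S\mathcal{T}$. First I would recall the structure of $G<\textrm{Aut}(\mathcal{T})$: the action on $S\mathcal{T}$ gives a fibration $\Gamma\backslash G\to\Gamma\backslash S\mathcal{T}$ with compact fibers isomorphic to $M=\textrm{Stab}_G(\xi)$, and the Haar measure $m^{\textrm{Haar}}$ on $\Gamma\backslash G$ was defined in Subsection \ref{density} precisely as the $M$-invariant extension of $m_\Gamma^{\textrm{Haar}}=m^{m,m}$. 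Since $\Phi_1,\Phi_2\in C_c(\Gamma\backslash G/M)$ are right $M$-invariant, they descend to compactly supported functions $f,g$ on $\Gamma\backslash S\mathcal{T}$; the key geometric point is that the element $a_{2j}\in Z$ translates the chosen geodesic $\xi$ by $2j$ along its axis, so that right-translation by $a_{2j}$ on $\Gamma\backslash G/M$ corresponds, after the identification, to the geodesic $2j$-translation $\phi_\Gamma^{\circ 2j}$ on $\Gamma\backslash S\mathcal{T}$. Consequently $\langle a_{2j}\cdot\Phi_1,\Phi_2\rangle$ equals $\int_{\Gamma\backslash S\mathcal{T}} f\cdot(g\circ\phi_\Gamma^{\circ 2j})\,dm_\Gamma^{\textrm{Haar}}$, up to the volume normalization of $M$ (which one may normalize to $1$).

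With this identification in hand, I would simply invoke the Theorem on $m^{\textrm{Haar}}$ stated just above the corollary: there is $\kappa_4>0$ with
$$e^{(h_\mathcal{T}-\delta_\Gamma)j}\int_{\Gamma\backslash S\mathcal{T}} f\cdot(g\circ\phi_\Gamma^{\circ 2j})\,dm_\Gamma^{\textrm{Haar}}=\frac{1}{|m_\Gamma^{\textrm{BM}}|}\int f\,dm_{*,\Gamma}^{\textrm{BR}}\int g\,dm_\Gamma^{\textrm{BR}}+O(e^{-\kappa_4 j}).$$
It remains to match the normalizing exponent. The volume entropy of the $(q^d+1,q^{d'}+1)$-biregular tree, computed over steps of length $2$, is $h_\mathcal{T}=\tfrac12\log(q^d q^{d'})=\tfrac12(d+d')\log q$, so $e^{2h_\mathcal{T} j}=q^{(d+d')j}$; together with $e^{2\delta_\Gamma j}$ coming from the Patterson-Sullivan side this rewrites $e^{(h_\mathcal{T}-\delta_\Gamma)\cdot 2j}=q^{(d+d')j}e^{-2\delta_\Gamma j}$, i.e. exactly the prefactor $q^{(d+d')j}/e^{2\delta_\Gamma j}$ appearing in the corollary (after replacing the index $n$ of the Theorem by $2j$, or equivalently reading the Theorem with its geodesic $2$-translation run $j$ times). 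Finally $\int f\,dm_{*,\Gamma}^{\textrm{BR}}=m^{\textrm{BR}}_{*,\Gamma}(\Phi_1)$ and $\int g\,dm_\Gamma^{\textrm{BR}}=m^{\textrm{BR}}_\Gamma(\Phi_2)$ by the same $M$-invariant extension identity used to define these measures on $\Gamma\backslash G$, which yields the claimed formula with $\kappa=\kappa_4$ (rescaled by the factor $2$ in the exponent).

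The main obstacle I anticipate is bookkeeping rather than a genuine difficulty: one must be careful that $C_c(\Gamma\backslash G/M)$ functions really do correspond to $C_c(\Gamma\backslash S\mathcal{T})$ functions under the fibration (this uses that $M$ is compact and that $\Gamma$ acts properly discontinuously, so fibers are compact and the quotient map is proper), and that the $a_{2j}$-action is genuinely the clean geodesic translation — here one should note that because $L_\Gamma=2\mathbb{Z}$, only even translations occur, consistent with $G$ acting without inversions so that $d_\mathcal{T}(x,g\cdot x)$ is always even. A secondary point is that $g\in L^1$ versus $g\in C_c$ and the direction of composition ($f\circ\phi$ versus $g\circ\phi$) must be tracked so the correct Burger-Roblin measures ($m^{\textrm{BR}}$ vs $m^{\textrm{BR}}_*$) land on the correct arguments $\Phi_1,\Phi_2$; the geodesic inversion $\iota$ interchanges these two and one invokes it if needed to align conventions. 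Modulo these normalization checks, the corollary is a formal consequence of the preceding Theorem.
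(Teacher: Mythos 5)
Your route is the intended one: the paper gives no separate argument for this corollary, treating it exactly as you do — identify $C_c(\Gamma\backslash G/M)$ with $C_c(\Gamma\backslash S\mathcal{T})$, let right translation by $a_{2j}$ act as the geodesic translation $\phi_\Gamma^{\circ 2j}$, invoke the preceding theorem for $m^{\textrm{Haar}}$, and convert via $e^{2h_\mathcal{T}j}=q^{(d+d')j}$ (your reading of the exponent, i.e.\ prefactor $e^{(h_\mathcal{T}-\delta_\Gamma)\cdot 2j}$ for flow time $2j$, is the consistent normalization). One bookkeeping correction: since $(a_{2j}\cdot\Phi_1)(x)=\Phi_1(xa_{2j})$, the translated factor is $\Phi_1\circ\phi_\Gamma^{\circ 2j}$, so the theorem applies with $f=\Phi_2$ and $g=\Phi_1$ and yields $m^{\textrm{BR}}_\Gamma(\Phi_1)\,m^{\textrm{BR}}_{\ast,\Gamma}(\Phi_2)$ directly, with no appeal to the inversion $\iota$ — your final line attaches the two Burger--Roblin measures to the wrong arguments, which careful tracking (as you yourself flag) removes.
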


\subsection{More on Burger-Roblin measure $m_\Gamma^\textrm{BR}$}\label{br}
We recall the definition of $\widetilde{m}_\Gamma^{\textrm{BR}}$ in (\ref{measures}). If $\psi\in C_c(S\mathcal{T})$, then $$\displaystyle\widetilde{m}^{BR}_\Gamma(\psi)=\int_{(\xi^+,\xi^-,s)\in S\mathcal{T}}\psi(g)e^{h_\mathcal{T}\beta_{g\cdot\xi^+}(o,g\cdot o)}e^{\delta\beta_{g\cdot\xi^-}(o,g\cdot o)}dm_o(g\xi^+)d\nu_o(g\xi^-)ds$$ and $$\displaystyle\widetilde{m}_{*,\Gamma}^{BR}(\psi)=\int_{(\xi^+,\xi^-,s)\in S\mathcal{T}}\psi(g)e^{\delta\beta_{g\cdot\xi^+}(o,g\cdot o)}e^{h_\mathcal{T}\beta_{g\cdot\xi^-}(o,g\cdot o)}d\nu_o(g\xi^+)dm_o(g\xi^-)ds.$$

Using Iwasawa decomposition, we may write $g=k_1p=k_2q$ for some $k_1,k_2\in K$, $p=a_{2j}k'\in P^+$ and $q=q_{2l}k''\in P^-$; for instance when $G=SL_2(\mathbb{F}),\,$ we can think of them as $
p=a_{2j}k'=\left( \begin{array}{cc}
\pi^{\,j} & b \\
0 & \pi^{-j} 
\end{array} \right)\in P^+$ and
$q=a_{2l}k''=\left( \begin{array}{cc}
\pi^{\,l} & 0 \\
c & \pi^{-l} 
\end{array} \right)\in P^-$ for some $b,c\in\mathbb{F}$. 
Thus, we have $$\beta_{g\cdot\xi^+}(o,g\cdot o)=\beta_{k_1p\cdot\xi^+}(o,k_1p\cdot o)=\beta_{\xi^+}(o,p\cdot o)=-2j$$ and
$$\beta_{g\cdot\xi^-}(o,g\cdot o)=\beta_{k_2q\cdot\xi^-}(o,k_2q\cdot o)=\beta_{\xi^-}(o,q\cdot o)=2l.$$

Note also that if we define the measure $d\lambda_g(n_x)$ on $N^+$ by $e^{h_\mathcal{T}\beta_{gn_x\cdot\xi^+}(o,gn_x\cdot o)}dm_o(gn_x\xi^+)$, then $d\lambda_g(n_x)=e^{h_\mathcal{T}\beta_{n_x\cdot\xi^+}(g^{-1}\cdot o,n_x\cdot o)}e^{h_\mathcal{T}\beta_{n_x\cdot\xi^+}(o,g^{-1}\cdot o)}dm_o(g\xi^+)=d\lambda_e(n_x)$. Since $d\lambda_e(gn_x)=d\lambda_g(n_x)=d\lambda_e(n_x)$, it follows that $\lambda_e$ is a Haar meausre on $N^+$. We will denote it by $dn=d\lambda_e(n)$ as an integral on $N^+$ (see \cite{OS}, Lemma 7.1).

Therefore, we have 
$$\widetilde{m}^{\mathrm{BR}}_\Gamma(\psi)=\displaystyle\sum_{l\in\mathbb{Z}}\int_{k_2\in K}\int_{n^-\in N^-/N^-\cap K}\psi(k_2a_ln^-)e^{2\delta l}dn^-d\nu_o(k_2\xi^-)$$ 
and similarly 
$$\widetilde{m}_{*,\Gamma}^{\mathrm{BR}}(\psi)=\sum_{j\in\mathbb{Z}}\int_{k_1\in K}\int_{n\in N^+/N^+\cap K}\psi(k_1a_jn)e^{-2\delta j}dnd\nu_o(k_1\xi^+).$$ 

\subsection{Definition of the measure $\mathcal{M}_G^{(\Gamma)}$}

Let $G=\underline{G}(\mathbb{F})$ be the $\mathbb{F}$-rational points of a semi-simple rank one algebraic group over $\mathbb{F}$ whose Bruhat-Tits tree $\cal{T}$ is a $(q^d+1,q^{d'}+1)$-biregular tree. We have the natural action of $G$ on $\cal{T}$. Let $K$ be the stabilizer of some fixed vertex $o$ in $\cal{T}$. Fix a geodesic $\xi\in S\mathcal{T}$ whose base point is $o$ and let $o'$ be the unique vertex in $\mathcal{H}_{\xi}^+(1)$ for which $d_\mathcal{T}(o,o')=1$. Let $K'$ be the stabilizer of the vertex $o'$. Then $K$ and $K'$ are maximal compact subgroups of $H$ and they are not conjugate to each other. In this case, we can identify $V\mathcal{T}\simeq G/K\cup G/K'$.

For a given measurable subset $B_R$ of $G$, we define the \emph{counting function} $F_R$ on $\Gamma\backslash G\times\Gamma\backslash G$ by
\begin{gather*}
\begin{aligned}
F_R(g,h) &=\sum_{\gamma\in\Gamma}\chi_{B_R}(g^{-1}\gamma h).
\end{aligned}
\end{gather*}

Then it follows that $F_R(e,g)=\left|\Gamma g\cap B_R\right|$. For any real-valued function $\Phi\in C_c(\Gamma\backslash G)$ and $k\in K$, we set $\Phi^k(g)\!:=\!\Phi(gk)$ and $\Phi_k(g)\!:=\!\Phi(kg)$.

\begin{lem} Suppose that $o$ is a vertex of degree $q^d+1$ in $\cal{T}$. If $f\in C_c(G)$, then we have
$$\int_{g=k_1a_{2j}k_2\in G}f(g)dg=\int_{k_1\in K}\sum_{a_{2j}\in Z^+}\int_{k_2\in K} f(k_1a_{2j}k_2)\Delta(2j)dk_2dk_1$$ where $\Delta(0)=1$ and $\Delta(2j)=(q^d+1)q^{(d+d')j-d}$ for $j\ge 1$, and $dk_1$ and $dk_2$ denote the probability measures on $K$.
\end{lem}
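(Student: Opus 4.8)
The statement is a Cartan (KAK) decomposition integration formula for the locally compact group $G$ acting on its Bruhat–Tits tree $\mathcal{T}$, with the Jacobian factor $\Delta(2j)$ recording the number of vertices at distance $2j$ in the direction of the $K$-orbit through $o$. The plan is to reduce the computation of $\int_G f(g)\,dg$ to a sum over spheres in $\mathcal{T}$. First I would fix the vertex $o$ of degree $q^d+1$ as basepoint, so $K=\mathrm{Stab}_G(o)$ is a maximal compact subgroup and $G/K$ is identified $G$-equivariantly with the set of vertices of $\mathcal{T}$ of the same type as $o$ (recall $G$ acts without inversions, so distances $d_{\mathcal{T}}(o,g\cdot o)$ are even). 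The Cartan decomposition $G=KZ^+K=\bigsqcup_{j\ge 0}K a_{2j}K$ holds because $a_{2j}\cdot o$ ranges over a set of orbit representatives for the $K$-action on vertices: two vertices are in the same $K$-orbit iff they are equidistant from $o$, and $d_{\mathcal{T}}(o,a_{2j}\cdot o)=2j$.

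The key step is to identify the Haar measure of the double coset $K a_{2j} K$. Decomposing $dg$ along $G\to G/K$ and using that the $G$-invariant measure on $G/K\cong V_{\mathrm{type}(o)}\mathcal{T}$ is the counting measure (normalized so $\{o\}$ has mass $1$), the mass of $Ka_{2j}K/K$ equals the cardinality of the sphere $S_{2j}(o)=\{x\in V\mathcal{T}:d_{\mathcal{T}}(o,x)=2j,\ \mathrm{type}(x)=\mathrm{type}(o)\}$. Counting on the $(q^d+1,q^{d'}+1)$-biregular tree: from $o$ there are $q^d+1$ edges to the opposite-type vertices, and from each of those $q^{d'}$ edges onward, giving $|S_2(o)|=(q^d+1)q^{d'}$; iterating, $|S_{2j}(o)|=(q^d+1)q^{d'}\cdot (q^d q^{d'})^{j-1}=(q^d+1)q^{(d+d')j-d}$ for $j\ge 1$, which is exactly $\Delta(2j)$, while $S_0(o)=\{o\}$ gives $\Delta(0)=1$. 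Then one writes $\int_G f\,dg = \sum_{j\ge 0}\int_{Ka_{2j}K} f\,dg$ and, for each $j$, unfolds the double coset integral as $\int_{k_1\in K}\int_{k_2\in K} f(k_1 a_{2j}k_2)\,dk_2\,dk_1$ weighted by $\Delta(2j)$, after checking that the fibers of $K\times K\to Ka_{2j}K$, $(k_1,k_2)\mapsto k_1a_{2j}k_2$, are accounted for consistently so that the product of probability Haar measures on $K\times K$ pushes forward to the normalized invariant measure on the double coset; this is where the modular-function triviality (both $G$ and $K$ unimodular, $G$ acting on a tree) and the bi-$K$-invariance of the setup are used.

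The main obstacle I expect is the bookkeeping in the last step: passing from the orbit-counting identity $\mathrm{vol}(Ka_{2j}K/K)=\Delta(2j)$ to the iterated-integral formula requires being careful about the stabilizer $K\cap a_{2j}Ka_{2j}^{-1}$ and its normalization, i.e. showing that $\int_{k_1}\int_{k_2} f(k_1a_{2j}k_2)\,dk_2\,dk_1$ with \emph{probability} measures already gives the correctly averaged value on the double coset so that multiplying by the single factor $\Delta(2j)$ (and not by some additional index) is exactly right. Concretely one verifies this by testing against $f=\chi_{Ka_{2j}K}$: the left side is $\mathrm{vol}(Ka_{2j}K)=\Delta(2j)\cdot\mathrm{vol}(K)=\Delta(2j)$ (with $\mathrm{vol}(K)=1$), and the right side is $\Delta(2j)\int_K\int_K 1\,dk_2\,dk_1=\Delta(2j)$, confirming the normalization; the general case follows by density of such combinations in $C_c(G)$ and linearity. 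The remaining verifications — that $K$ is open-compact, that the sum over $j$ is locally finite on the compactly supported $f$, and that $a_{2j}\cdot o$ exhaust the $K$-orbits — are routine given the tree structure recalled in Section~\ref{two}.
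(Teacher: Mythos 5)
Your argument is correct and is essentially the paper's proof: both reduce the Haar integral to the $G$-invariant counting measure on $G/K$, identified with the vertices of the same type as $o$, and both identify $\Delta(2j)$ with the cardinality of the sphere of radius $2j$, i.e.\ with the number of left $K$-cosets in $Ka_{2j}K$ (the paper organizes this by averaging $f$ over $K$ on the right to get $f^K$ and summing over $G/K$, you organize it by double cosets). One phrase should be repaired: characteristic functions of the double cosets $Ka_{2j}K$ are \emph{not} dense in $C_c(G)$; instead note that both sides of the formula are unchanged when $f$ is replaced by its bi-$K$-average (the left side by bi-$K$-invariance of Haar measure and Fubini, the right side trivially), and this average is a finite linear combination of the $\chi_{Ka_{2j}K}$ — equivalently, invoke uniqueness of the $K\times K$-invariant probability measure on each compact orbit $Ka_{2j}K$ to justify your pushforward claim for general $f$ directly.
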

\begin{proof}
Note that $\Delta(2j),(j\ge0)$ is the number of vertices of a sphere of radius $2j$ (centered at $x_0$) in a $(q^d+1,q^{d'}+1)$-regular tree. Let $f^K$ be the function on $G/K$ given by $f^K(xK)=\int_{k\in K}f(xk)dk$. Let $W_0$ be a spherical Weyl group so that $K=BW_0B$ corresponds to a vertex $o\in\mathcal{T}$ and $dk$ is a probability measure on $K$. Now we have \begin{align*}
\displaystyle \sum_{a_{2j}\in Z^+}\int_{k_1}\int_{k_2}f(k_1a_{2j}k_2)\Delta(2j)dk_1dk_2 &=\sum_{a_{2j}\in Z^+}\int_{k_1}f^{K}(k_1a_{2j}K)\Delta(2j)dk_1 \\ &=\sum_{k_1a_{2j}K\in G/K}f^{K}(k_1a_{2j}K)
\end{align*}
Meanwhile, if we denote by $m_{G/K}$ the $G$-invariant measure on $G/K$, then
\begin{align*}
\int_{g=k_1a_{2j}k_2} f(g)dg&=\int_{k_1a_{2j}K\in G/K}\int_{k_2\in K} f(k_1a_{2j}k_2)\,dk_2\,dm_{G/K} \\
&=\int_{k_1a_{2j}K\in G/K}f^{K}(k_1a_{2j}K)\,dm_{G/K}.
\end{align*} This completes the proof since the counting measure $m$ on $V\mathcal{T}\simeq G/K\cup G/K'$ given by $m(E)=\#E$ is $G$-invariant.
\end{proof}

Now for any $\Phi_1,\Phi_2\in C_c(\Gamma\backslash G)$, using the previous lemma and following the argument of Duke-Rudnick-Sarnak (\cite{DRS}), we have
\label{a}
\begin{align*}
\langle F_R,\Phi_1\otimes\Phi_2\rangle _{\Gamma\backslash G\times\Gamma\backslash G} &= \int_{\Gamma\backslash G}\int_{\Gamma\backslash G} \sum_{\gamma\in\Gamma}\chi_{B_R}(x^{-1}\gamma y)\Phi_1(x)\Phi_2(y)dxdy \\ \displaybreak[1]
&= \int_{\Gamma\backslash G}\Phi_1(x)\int_{\Gamma\backslash G}\sum_{\gamma\in\Gamma}\chi_{B_R}(x^{-1}\gamma y)\Phi_2(\gamma y)dydx \\ \displaybreak[1]
&= \int_{\Gamma\backslash G}\Phi_1(x)\int_G \chi_{B_R}(x^{-1}g)\Phi_2(g)dgdx \\ \displaybreak[1]
&= \int_{g\in B_R}\int_{\Gamma\backslash G}\Phi_1(x)\Phi_2(xg)dxdg \numberthis\label{eqn}\\
&= \int_{g\in B_R}\langle \Phi_1,g\cdot\Phi_2\rangle_{L^2(\Gamma\backslash G)}dg \\ \displaybreak[1]
&= \int_{k_1a_{2j}k_2\in B_R}\left[\int_{\Gamma\backslash G}\Phi_1(xk_2^{-1})\Phi_2(xk_1a_{2j})dx\right]\cdot \Delta(2j)djdk_1dk_2 \\
&=\int_{k_1}\int_{k_2}\sum_{a_{2j}\in k_1^{-1}B_Rk_2^{-1}}\left(\Delta(2j)\cdot\langle\Phi_1^{k_2^{-1}},a_{2j}\cdot\Phi_2^{k_1}\rangle\right)dk_1dk_2.
\end{align*}

Let $\delta=\delta_\Gamma$ be the critical exponent of $\Gamma$, and let $\nu_o$ be the Patterson-Sullivan density based at the vertex $o\in \cal{T}$ (see Subsection \ref{density} for the definition). We define the measure $\mathcal{M}_G^{(\Gamma)}$ on $G$ by 
\begin{align*}
\mathcal{M}_G^{(\Gamma)}(\psi)&=\frac{1}{\vert m_\Gamma^{\textrm{BM}}\vert}\int_{k_1\in K}\int_{k_2\in K}\left(\sum_{j\in\mathbb{N}\cup\{0\}}\frac{e^{2\delta j}}{q^{(d+d')j}}\Delta(2j)\psi(k_1a_{2j}k_2)\right)d\nu_o(k_1\xi^-)d\nu_o(k_2\xi^+)\\
&=\frac{1}{\vert m_\Gamma^{\textrm{BM}}\vert}\int\!\!\!\!\int_{k_1,k_2\in K}\!\!\left(\psi(k_1k_2)+\frac{q^d+1}{q^d}\sum_{j\in\mathbb{N}}e^{2\delta j}\psi(k_1a_{2j}k_2)\right)d\nu_o(k_1\xi^-)d\nu_o(k_2\xi^+).
\end{align*}
for any $\psi\in C_c(G)$.
For example, if we let $Z_{2R}^+=\{a_{2j}\,\vert\, 0\le j\le R\}$, then we get
$$\mathcal{M}_G^{(\Gamma)}(KZ_{2R}^+K)=\left(1+\frac{q^d+1}{q^d}\cdot\frac{e^{2\delta}(e^{2\delta R}-1)}{e^{2\delta}-1}\right) \frac{1}{\vert m_\Gamma^{\textrm{BM}}\vert}.$$


\subsection{About well-rounded sets}
Using the exponential mixing property of the geodesic translation map $\phi^{\circ 2}\colon S\mathcal{T}\to S\mathcal{T}$, we count the number of points of a $\Gamma$-orbit in a well-rounded family of compact sets. Here we give a definition of well-roundedness following \cite{MO}. 
\begin{defn} A family of compact sets $\{B_R\}\subseteq G$ is called \emph{effectively well-rounded} with respect to $\Gamma$ if there exists $p>0$ such that for any small $\epsilon>0$ and sufficiently large $R$, we have $\mathcal{M}_G^{(\Gamma)}(B_{R,\epsilon}^+ -B_{R,\epsilon}^-)=O(\epsilon^p)\mathcal{M}_G^{(\Gamma)}(B_R).$
\end{defn}

\begin{defn}[\cite{MO}, Definition 6.10]\label{admissible}
We say a Borel subset $\Omega\subseteq K$ with $\nu_o(\Omega^{-1})>0$ is \emph{admissible} if there exists $0<s\le 1$ such that for all small $\epsilon>0$,
$$\nu_o\left(\Omega^{-1}K_{\epsilon}-\bigcap_{k\in K_\epsilon} \Omega^{-1}k\right)\ll\epsilon^s$$ with the implied constant depending only on $\Omega$. Here, $K_\epsilon$ is the $\epsilon$-neighborhood of identity in $K$ with respect to the given left-invariant metric.
\end{defn}

\begin{exam}[Admissible bisectors]
\label{well-roundness of bisectors}
Let $\Omega_1,\Omega_2\subset K$ be Borel subsets in $K$ such that $\Omega_1^{-1}$ and $\Omega_2$ are admissible. If we write $S_R(\Omega_1,\Omega_2)=\Omega_1 Z_{2R}^+\Omega_2$, then the family of compact sets $\{S_R(\Omega_1,\Omega_2)\colon R\in\mathbb{N}\}$ is effectively well-rounded with respect to $\Gamma$.
\end{exam}
\begin{proof}
First, by the definition of the measure $\mathcal{M}_G^{(\Gamma)}$, we have $$\displaystyle\mathcal{M}_G^{(\Gamma)}(S_R(\Omega_1,\Omega_2))=\frac{1}{|m_{\Gamma}^{\textrm{BM}}|}\nu_o(\Omega_1)\nu_o(\Omega_2^{-1})\sum_{0\le j\le R}\frac{\Delta(2j)}{q^{(d+d')j}}e^{2\delta j}.$$ 
Also for all $R\in\mathbb{N}$ and small $\epsilon>0$, we have
$$S_T(\Omega_1,\Omega_2)G_\epsilon\subset \Omega_{1}Z_{2R}^+\Omega^{-1}_{2}K_{\epsilon}$$ 
because $K$ is an open subgroup of $G$. (In fact, this implies the strong wavefront property of Cartan decomposition.)
Hence
\begin{equation*}
\begin{aligned}
\mathcal{M}_G^{(\Gamma)}(S_R(\Omega_1,\Omega_2)G_\epsilon) &\ll
\frac{1}{|m_{\Gamma}^{\textrm{BM}}|}\nu_o(\Omega_{1})\nu_o(\Omega^{-1}_{2}K_\epsilon)\sum_{0\le j\le R}\frac{\Delta(2j)}{q^{(d+d')j}}e^{2\delta j}\\ 
&\ll \frac{1}{|m_{\Gamma}^{\textrm{BM}}|}(1+O(\epsilon^s))\nu_o(\Omega_1)\nu_o(\Omega_2^{-1})\sum_{0\le j\le R}\frac{\Delta(2j)}{q^{(d+d')j}}e^{2\delta j} \\
&= (1+O(\epsilon^s))\mathcal{M}_G^{(\Gamma)}(S_R(\Omega_1,\Omega_2))
\end{aligned}
\end{equation*}
for some $s>0$. Similarly, we also have that 
$$\mathcal{M}_G^{(\Gamma)}\left(\bigcap_{g\in G_\epsilon}S_R(\Omega_1,\Omega_2)g\right)=(1-O(\epsilon^s))\mathcal{M}_G^{(\Gamma)}(S_R(\Omega_1,\Omega_2)) \textrm{ for some }s>0.$$ Therefore, the family $\{S_R(\Omega_1,\Omega_2)\colon R\in\mathbb{N}\}$ is effectively well-rounded with respect to $\Gamma$.
\end{proof}

Meanwhile, for a subset $E\in V\mathcal{T}$, the boundary $\partial E$ of $E$ is defined by the set $\{x\in V\mathcal{T}\,|\,x\notin E \textrm{ and } \exists\, y\in E\colon d(y,x)=1 \}$. We can prove that if the boundaries of a family of finite sets in $V\mathcal{T}$ are sufficiently small, then the family of finite sets itself is well-rounded. More precisely, 

\begin{prop}
Let $\{E_R\}$ be a family of finite subsets in $V\mathcal{T}\simeq G/K\cup G/K'$ satisfying the following property: $\vert E_R\vert\to\infty$ as $R\to\infty$ and there exists a universal constant $\beta>1$ such that $\displaystyle \frac{\vert\partial E_R\vert}{\vert E_R\vert}\le R^{-\beta}$ for every $R\ge 1$. Then $\{E_R\}$ is a well-rounded family.
\end{prop}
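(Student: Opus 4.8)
The plan is to lift $E_R$ to a compact open subset of $G$, reduce effective well-roundedness to an estimate on the $\mathcal{M}_G^{(\Gamma)}$-measure of the $\epsilon$-boundary shell of that subset, and control this shell through the combinatorial boundary $\partial E_R$, exploiting that small neighbourhoods of the identity in $G$ fix arbitrarily large balls of $\mathcal{T}$.

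First, under the identification $V\mathcal{T}\simeq G/K\sqcup G/K'$ I would let $\widehat{E}_R\subseteq G$ be the full preimage of $E_R$; this is a finite union of cosets of $K$ and of $K'$, hence compact and open, and the orbit count in $\widehat{E}_R$ agrees with the one in $E_R$. Next, recall that for every $\epsilon>0$ the open compact subgroup $G_\epsilon$ fixes the ball $B(o,N(\epsilon))$ of $\mathcal{T}$ pointwise, with $N(\epsilon)\to\infty$ as $\epsilon\to 0$, and acts transitively on each \emph{depth-$N(\epsilon)$ cone}, i.e.\ on the set of vertices at a fixed distance $m>N(\epsilon)$ from $o$ that share the first $N(\epsilon)$ edges of their geodesic to $o$. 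Since $\widehat{E}_R$ is right-$K$- and right-$K'$-invariant, the right $\epsilon$-thickening does nothing, and one checks that the $\epsilon$-thickening $\widehat{E}_{R,\epsilon}^{+}$ and $\epsilon$-shrinking $\widehat{E}_{R,\epsilon}^{-}$ correspond, back in $\mathcal{T}$, to the union of all depth-$N(\epsilon)$ cones meeting $E_R$, resp.\ contained in $E_R$, together with $B(o,N(\epsilon))\cap E_R$ in both cases; hence the shell $\widehat{E}_{R,\epsilon}^{+}\smallsetminus\widehat{E}_{R,\epsilon}^{-}$ is the lift of the union of the cones straddling $E_R$, meeting both $E_R$ and its complement. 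In particular, as soon as $N(\epsilon)\ge\operatorname{diam}_o(E_R)$ the shell is empty, so the real content of the statement is the \emph{uniform-in-$R$, effective} estimate.

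The combinatorial step is to bound the shell by $\partial E_R$. Each straddling cone contains two adjacent vertices, one in $E_R$ and one not, hence a vertex of $\partial E_R$; as $\mathcal{T}$ has degree at most $q^{\max(d,d')}+1$ and each edge of $\mathcal{T}$ lies in a single depth-$N(\epsilon)$ cone, the number of straddling cones is $O(|\partial E_R|)$, and a cone at distance $m\le\operatorname{diam}_o(E_R)$ from $o$ has at most $q^{\max(d,d')(m-N(\epsilon))}$ vertices. Passing to $\mathcal{M}_G^{(\Gamma)}$: restricted to finite vertex sets this measure is comparable, with constants depending only on $\mathcal{T}$, $\Gamma$ and $o$, to the Patterson--Sullivan-weighted counting measure $v\mapsto (e^{\delta}q^{-(d+d')/2})^{d(o,v)}\,\nu_o(\mathcal{O}_v)$, where the shadows $\mathcal{O}_v$ over the vertices of a fixed cone partition the shadow of its apex; combining this with the facts that all vertices of a cone lie at the same distance from $o$ and that each straddling cone sits below a vertex of $E_R$ lying on its straddling edge, the shadow lemma and local finiteness yield
\[
\mathcal{M}_G^{(\Gamma)}\!\left(\widehat{E}_{R,\epsilon}^{+}\smallsetminus\widehat{E}_{R,\epsilon}^{-}\right)\ \ll\ q^{\,c\,(\operatorname{diam}_o(E_R)-N(\epsilon))}\ \frac{|\partial E_R|}{|E_R|}\ \mathcal{M}_G^{(\Gamma)}\!\left(\widehat{E}_R\right)\ \le\ q^{\,c\,(\operatorname{diam}_o(E_R)-N(\epsilon))}\,R^{-\beta}\,\mathcal{M}_G^{(\Gamma)}\!\left(\widehat{E}_R\right)
\]
for a constant $c=c(\mathcal{T})$. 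Since $N(\epsilon)$ grows like $\log_q(1/\epsilon)$, for $\operatorname{diam}_o(E_R)=O(R)$ one may calibrate $\epsilon$ polynomially small in $R$ so that the geometric prefactor is $\ll R^{\,\beta-1-p}$ for some $p>0$; the right-hand side is then $O(\epsilon^{p'})\,\mathcal{M}_G^{(\Gamma)}(\widehat{E}_R)$, which is precisely effective well-roundedness with respect to $\Gamma$.

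I expect the main obstacle to be the passage from the purely combinatorial bound $|\partial E_R|\le R^{-\beta}|E_R|$ to the \emph{weighted} estimate above: the Patterson--Sullivan weights $\nu_o(\mathcal{O}_v)$ are non-uniform (they degenerate near cusps of $\Gamma\backslash\backslash\mathcal{T}$ and near the boundary of $\Lambda_\Gamma$), so the comparison between the weight of a shell vertex and that of its $E_R$-neighbour, and the bookkeeping of how often a given $E_R$-vertex is charged, must be carried out through the shadow lemma with constants uniform over the relevant range of depths; this is where local finiteness of $\mathcal{T}$, the Gibbs property of $\nu^{\pm}$, the standing radius bound $\operatorname{diam}_o(E_R)=O(R)$, and the strict hypothesis $\beta>1$ (which supplies exactly the extra power absorbing the depth-dependent prefactor $q^{c(\operatorname{diam}_o(E_R)-N(\epsilon))}$ once $\epsilon$ is tied to $R$) all enter. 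A secondary, routine point is the explicit identification of $\widehat{E}_{R,\epsilon}^{\pm}$ with the cone-saturations described above, which follows from the transitivity of $G_\epsilon$ on depth-$N(\epsilon)$ cones together with the right invariance of $\widehat{E}_R$ under $K$ and $K'$.
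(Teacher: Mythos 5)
Your route diverges from the paper's and, as written, it has genuine gaps. The decisive one is quantitative: you bound the shell by the straddling depth-$N(\epsilon)$ cones and arrive at a prefactor $q^{\,c(\operatorname{diam}_o(E_R)-N(\epsilon))}$, and to absorb it you invoke a ``standing radius bound'' $\operatorname{diam}_o(E_R)=O(R)$ that is simply not among the hypotheses (only $|E_R|\to\infty$ and $|\partial E_R|/|E_R|\le R^{-\beta}$ are assumed). Even granting such a bound, the calibration fails: $N(\epsilon)$ grows only like $\log_q(1/\epsilon)$, so with $\epsilon$ polynomially small in $R$ the prefactor is of size $q^{c\operatorname{diam}_o(E_R)}R^{-O(1)}$, which is exponential in $R$ and cannot be dominated by any power of $R$; your scheme would only close if $\operatorname{diam}_o(E_R)=O(\log R)$. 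Moreover, the paper's definition of effective well-roundedness quantifies over every small $\epsilon$ with $R$ sufficiently large, so tying $\epsilon$ to $R$ inside the verification is not an admissible move. Finally, the step you yourself flag as the main obstacle --- converting the combinatorial bound on $|\partial E_R|/|E_R|$ into a bound on ratios of $\mathcal{M}_G^{(\Gamma)}$-masses via a uniform shadow-lemma comparison, with correct bookkeeping of how often each $E_R$-vertex is charged --- is not actually carried out, and it is exactly where the non-uniformity of the Patterson--Sullivan weights would have to be confronted.

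The paper's proof is structured quite differently and avoids the cone analysis entirely. It splits $E_R$ by vertex type, $D_R=E_R\cap(G\cdot o)$ and $D_R'=E_R\cap(G\cdot o')$, lifts them to the right-$K$- (resp.\ $K'$-) invariant sets $D_RK$ and $D_R'K'$, and perturbs not by the small subgroups $G_\epsilon$ (which, as you correctly observe, act trivially on these lifts) but by the fixed bi-invariant sets $U=Ka_2Ka_2^{-1}$ and $V=K'a_2K'a_2^{-1}$, which displace a vertex by a bounded distance in $\mathcal{T}$. The resulting shell $D_RKU-\bigcap_{u\in U}D_RKu$ is then dominated by $(q^d+1)\,\mathcal{M}_G^{(\Gamma)}\bigl((\partial D_R)K\bigr)$, i.e.\ by the lift of the combinatorial boundary times a degree constant, and the hypothesis $|\partial E_R|/|E_R|\le R^{-\beta}$ together with the definition of $\mathcal{M}_G^{(\Gamma)}$ gives the ratio bound $(q^d+1)^2R^{-\beta}$ (and similarly for $D_R'K'$ with $(q^{d'}+1)^2R^{-\beta}$). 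No diameter hypothesis, no $\epsilon$--$R$ calibration, and no exponential prefactor appear; the only comparison needed is between masses of adjacent vertex fibres, which is far more localized than the cone-by-cone comparison your argument would require. If you want to salvage your approach, you would have to either add a logarithmic diameter hypothesis or replace the $G_\epsilon$-shell by a bounded-displacement shell of the above kind --- at which point you have essentially reproduced the paper's argument.
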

\begin{proof}
Let $D_R=E_R\cap (G\cdot o)$ and $D_R'=E_R\cap (G\cdot o')$.
Taking $U=Ka_2Ka_2^{-1}$, we have $$\displaystyle \frac{\mathcal{M}_G^{(\Gamma)}(D_RKU-\cap_{u\in U}D_RKu)}{\mathcal{M}_G^{(\Gamma)}(D_RK)}\le\frac{(q^d+1)\mathcal{M}_G^{(\Gamma)}((\partial D_R)K)}{\mathcal{M}_G^{(\Gamma)}(D_RK)}$$ and because of the condition and the definition of measure $\mathcal{M}_G^{(\Gamma)}$, it follows that $$\displaystyle\frac{(q^d+1)\mathcal{M}_G^{(\Gamma)}((\partial D_R)K)}{\mathcal{M}_G^{(\Gamma)}(D_RK)}\le(q^d+1)^2R^{-\beta}.$$ The similar argument gives us 
$$\frac{\mathcal{M}_G^{(\Gamma)}(D_R'K'V-\cap_{v\in V}D_R'K'v)}{\mathcal{M}_G^{(\Gamma)}(D_R'K')}\le (q^{d'}+1)^2R^{-\beta}$$ for $V=K'a_2K'a_2^{-1}$. 
These inequalities complete the proof.
\end{proof}

\subsection{Effective counting of discrete points in $G$}

\begin{thm} For an effectively well-rounded family $\{B_R\}$ of compact subsets of $G$, there is a constant $\eta>0$ for which we have
$$\vert\Gamma g\cap B_R\vert =\mathcal{M}_G^{(\Gamma)}(B_R)+O(\mathcal{M}_G^{(\Gamma)}(B_R)^{1-\eta}).$$
\end{thm}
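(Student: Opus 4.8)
\noindent The plan is to run the smoothing--and--unfolding counting argument of Duke--Rudnick--Sarnak and Eskin--McMullen (\cite{DRS},\cite{EM}) in effective form, as in \cite{MO}, using as inputs the effective decay of matrix coefficients of Corollary \ref{Roblin2} and the effective well-roundedness hypothesis. Fix $g\in G$ and write $N(R)=|\Gamma g\cap B_R|=F_R(e,g)$; for a measurable $B\subseteq G$ put $F_B(x,y)=\sum_{\gamma\in\Gamma}\chi_B(x^{-1}\gamma y)$. First I would choose a non-negative $\psi_\epsilon\in C_c(G)$ supported in the $\epsilon$-neighbourhood $G_\epsilon$ of the identity with $\int_G\psi_\epsilon=1$; since $G$ is totally disconnected one may take $\psi_\epsilon=\frac{1}{\mathrm{vol}(U_\epsilon)}\chi_{U_\epsilon}$ for a compact open subgroup $U_\epsilon\subseteq G_\epsilon$, so that all functions on $\Gamma\backslash G$ below are automatically bi-$U_\epsilon$-invariant. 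Let $\Psi_\epsilon,\Psi_\epsilon^{g}\in C_c(\Gamma\backslash G)$ be the projections of $\psi_\epsilon$ and of $x\mapsto\psi_\epsilon(g^{-1}x)$. Using $B_{R,\epsilon}^{+}=G_\epsilon B_RG_\epsilon$, $B_{R,\epsilon}^{-}=\bigcap_{u,v\in G_\epsilon}uB_Rv$ and $\int\psi_\epsilon=1$, a direct computation yields the sandwich $\langle F_{B_{R,\epsilon}^{-}},\Psi_\epsilon\otimes\Psi_\epsilon^{g}\rangle\le N(R)\le\langle F_{B_{R,\epsilon}^{+}},\Psi_\epsilon\otimes\Psi_\epsilon^{g}\rangle$.

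Next I would unfold each side. By the unfolding identity (\ref{eqn}) and the Cartan integration formula proved above,
\[
\langle F_{B_{R,\epsilon}^{\pm}},\Psi_\epsilon\otimes\Psi_\epsilon^{g}\rangle=\int_{k_1\in K}\int_{k_2\in K}\ \sum_{a_{2j}\in\,k_1^{-1}B_{R,\epsilon}^{\pm}k_2^{-1}}\Delta(2j)\,\big\langle\Psi_\epsilon^{k_2^{-1}},\,a_{2j}\cdot(\Psi_\epsilon^{g})^{k_1}\big\rangle\,dk_1\,dk_2 .
\]
Now apply Corollary \ref{Roblin2} to each inner product: for $j\ge1$ it equals $\dfrac{e^{2\delta j}}{q^{(d+d')j}}\Big(\dfrac{m_\Gamma^{\textrm{BR}}((\Psi_\epsilon^{g})^{k_1})\,m_{\ast,\Gamma}^{\textrm{BR}}(\Psi_\epsilon^{k_2^{-1}})}{|m_\Gamma^{\textrm{BM}}|}+O(e^{-\kappa j})\Big)$, and tracking the dependence of the implied constant of Corollary \ref{Roblin2} on the level shows that the error constant is $\ll\epsilon^{-c}$ for a fixed $c>0$ (it grows at most polynomially in the index $[U_0:U_\epsilon]$). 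Since $\Delta(2j)\,e^{2\delta j}q^{-(d+d')j}=\frac{q^d+1}{q^d}e^{2\delta j}$ for $j\ge1$, the error term, summed over $1\le j\le R$ and integrated over $k_1,k_2$, contributes at most $O\big(\epsilon^{-c}\sum_{j\le R}e^{(2\delta-\kappa)j}\big)=O\big(\epsilon^{-c}e^{(2\delta-\kappa)R}\big)$, after shrinking $\kappa$ so that $0<\kappa<2\delta$.

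It remains to identify the main term. Summed over $j$ and integrated over $k_1,k_2$, it has exactly the shape of the integral defining $\mathcal{M}_G^{(\Gamma)}$ in Section \ref{five}, provided one shows that the smoothed Burger--Roblin integrals $m_\Gamma^{\textrm{BR}}((\Psi_\epsilon^{g})^{k_1})$ and $m_{\ast,\Gamma}^{\textrm{BR}}(\Psi_\epsilon^{k_2^{-1}})$ converge, uniformly in $k_1,k_2\in K$ and in $j$, to the Patterson--Sullivan boundary data $d\nu_o(k_1\xi^{-})$ and $d\nu_o(k_2\xi^{+})$ of that formula, with a polynomial rate $O(\epsilon^{s})$. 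This uses the explicit expressions for $\widetilde m_\Gamma^{\textrm{BR}}$ and $\widetilde m_{\ast,\Gamma}^{\textrm{BR}}$ in Subsection \ref{br}, the conformal (cocycle) property of the densities $\nu_o$ --- through which the $g$-dependence of the factor built from $\Psi_\epsilon^{g}$ cancels against the weight in $\mathcal{M}_G^{(\Gamma)}$, so that the main term is genuinely independent of $g$ --- and the admissibility, non-atomicity and H\"older-type regularity of $\nu_o$ (cf.\ Definition \ref{admissible}) to handle the indicator of $B_{R,\epsilon}^{\pm}$. Granting this, the main term equals $\mathcal{M}_G^{(\Gamma)}(B_{R,\epsilon}^{\pm})(1+O(\epsilon^{s}))$, and combining with the error estimate and with the effective well-roundedness hypothesis $\mathcal{M}_G^{(\Gamma)}(B_{R,\epsilon}^{\pm})=\mathcal{M}_G^{(\Gamma)}(B_R)(1+O(\epsilon^{p}))$ gives $\big|N(R)-\mathcal{M}_G^{(\Gamma)}(B_R)\big|=O\big(\epsilon^{\min(p,s)}\mathcal{M}_G^{(\Gamma)}(B_R)\big)+O\big(\epsilon^{-c}e^{(2\delta-\kappa)R}\big)$. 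Finally, since $\mathcal{M}_G^{(\Gamma)}(B_R)$ is comparable to $e^{2\delta\rho}$, where $\rho$ is the Cartan radius of $B_R$ (visible from the computation of $\mathcal{M}_G^{(\Gamma)}(KZ_{2R}^{+}K)$ in Section \ref{five} together with well-roundedness), choosing $\epsilon$ to be a suitable small negative power of $\mathcal{M}_G^{(\Gamma)}(B_R)$ balances the two error terms and yields $N(R)=\mathcal{M}_G^{(\Gamma)}(B_R)+O(\mathcal{M}_G^{(\Gamma)}(B_R)^{1-\eta})$ for some $\eta=\eta(p,s,c,\kappa,\delta)>0$.

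The step I expect to be the main obstacle is the effective identification of the smoothed main term with $\mathcal{M}_G^{(\Gamma)}(B_{R,\epsilon}^{\pm})$: one must prove that $m_\Gamma^{\textrm{BR}}$ and $m_{\ast,\Gamma}^{\textrm{BR}}$ evaluated on the approximate identities $\Psi_\epsilon^{k}$ converge to the Patterson--Sullivan densities at a genuine power-of-$\epsilon$ rate that is \emph{uniform} in $k\in K$ and in the Cartan parameter $j$, which is exactly where admissibility of the boundary pieces and the regularity of $\nu_o$ enter. Secondary technical points are verifying that the $g$-dependence cancels cleanly and that the level-dependence of the implied constant in Corollary \ref{Roblin2} is under control.
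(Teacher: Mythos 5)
Your proposal follows essentially the same route as the paper's proof: smooth the counting function, unfold via the identity (\ref{eqn}) and the Cartan integration formula, apply the effective matrix-coefficient decay of Corollary \ref{Roblin2}, rewrite the resulting Burger--Roblin terms with the explicit formulas of Subsection \ref{br} to recover $\mathcal{M}_G^{(\Gamma)}(B_{R,\epsilon}^{\pm})$, and finish with the well-roundedness sandwich. You are in fact somewhat more explicit than the paper about the $\epsilon$-dependence of the implied constants and the final balancing of $\epsilon$ against $R$, which the paper leaves implicit in its ``asymptotically equivalent'' step.
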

\begin{proof} For an element $g=a_ln^-k_1\in ZN^{-}K$, define $\kappa^{-}(g)=k_1$ and $H^{-}(g)=l$. Similarly, if $g=a_{l'}n^+k_2\in ZN^+K$, then we define $\kappa^+(g)=k_2$ and $H^+(g)=l'$. We have an integral formula
$$\int_{g=a_ln^-k} f(g)\,dg=\int_{k\in K}\int_{n^-\in N^-/(N^-\cap K)}\sum_{\l=-\infty}^{\infty}f(a_ln^-k_1)\,dn^-dk_1.$$
Let $\Phi_\epsilon\in C_c(G)$ be a continuous function whose support is a compact subset of an $\epsilon$-neighborhood of $e$ in $G$. By equation \eqref{eqn} we have
\begin{align*} 
&\langle F_{B_{R,\epsilon}}^+,\Phi_\epsilon\otimes\Phi_\epsilon\rangle \\
=&\int_{g\in B_{R,\epsilon}^+}\langle\Phi_\epsilon,g\cdot\Phi_\epsilon\rangle_{L^2(\Gamma\backslash G)}dm^{\textrm{Haar}}(g) \\\displaybreak[0]
=&\int_{k_1a_{2j}k_2\in B_{R,\epsilon}}\langle\Phi^{k_1^{-1}}_\epsilon,a_{2j}\cdot\Phi^{k_2}_\epsilon\rangle_{L^2(\Gamma\backslash G)}dm^{\textrm{Haar}}(g).
\end{align*}
Now by Corollary \ref{Roblin2} this is asymptotically equivalent to 
\begin{align*}
&\frac{1}{\vert m_{\Gamma}^{\textrm{BM}}\vert}\int_{k_1a_{2j}k_2\in B_{R,\epsilon}}\frac{e^{2\delta j}}{q^{(d+d')j}}m^{\textrm{BR}}_\Gamma(\Phi^{k_1^{-1}}_\epsilon) m^{\textrm{BR}}_{*,\Gamma}(\Phi^{k_2}_\epsilon)\,dm^{Haar}(g) \\\displaybreak[0]
=&\frac{1}{\vert m_{\Gamma}^{\textrm{BM}}\vert}\int_{k_1\in K}\int_{k_2\in K}\sum_{a_{2j}\in k_1^{-1}B_{R,\epsilon}^+k_2^{-1}\cap Z^+}\frac{e^{2\delta j}\Delta(2j)}{q^{(d+d')j}}m^{\textrm{BR}}_\Gamma(\Phi^{k_1^{-1}}_\epsilon)m^{\textrm{BR}}_{*,\Gamma}(\Phi^{k_2}_\epsilon)\,dk_1dk_2.
\end{align*}
The formula of Burger-Roblin measure given in subection \ref{br} implies that this equation is equal to
\begin{align*}
&\frac{1}{|m_{\Gamma}^{\textrm{BM}}|} \int_{k_1\in K}\int_{k_2\in K}\sum_{a_{2j}\in k_1^{-1}B_{R,\epsilon}^+k_2^{-1}\cap Z^+}\frac{e^{2\delta j}\Delta(2j)}{q^{(d+d')j}} \\ \displaybreak[0] &\cdot\left[\int_{k\in K}\int_{n^-\in N^-/N^-\cap K}\sum_{l=-\infty}^{\infty}\left(\Phi_\epsilon(ka_ln^-k_1^{-1})e^{2\delta l}\right)dn^-d\nu_o(k\xi^-)\right] \\ &\cdot\left[\int_{k'\in K}\int_{n^+\in N^+/N^+\cap K}\sum_{l'=-\infty}^{\infty}\left(\Phi_\epsilon(k'a_{l'}n^+k_2)e^{-2\delta l'}\right)dn^+d\nu_o(k'\xi^+)\right]dk_1dk_2.
\end{align*}
Taking $g_1=a_ln^-k_1$ and $g_2=a_{l'}n^+k_2$ and by the well-roundedness of $\{B_R\}$ we can conclude that
\begin{align*}
&\langle F_{B_{R,\epsilon}}^+,\Phi_\epsilon\otimes\Phi_\epsilon\rangle \\ 
\sim\,&\frac{1}{\vert m_{\Gamma}^{\textrm{BM}}\vert}\int_{k\in K}\int_{k'\in K}\int_{g_1\in G}\int_{g_2\in G}\Phi_\epsilon(kg_1)e^{2\delta H^-(g_1)}\Phi_\epsilon(k'g_2)e^{-2\delta H^+(g_2)}\\ \displaybreak[0]
&\cdot\sum_{a_{2j}\in \kappa(g_1)^{-1}B_{R,\epsilon}^+\kappa(g_2)^{-1}\cap Z^+}\frac{e^{2\delta j}\Delta(2j)}{q^{(d+d')j}}\,dg_1dg_2d\nu_o(k\xi^-)d\nu_o(k'\xi^+) \\ \displaybreak[1]
=\,&\frac{1}{\vert m_{\Gamma}^{\textrm{BM}}\vert}\int_{k}\int_{k}\int_{g_1}\int_{g_2}\sum_{a_{2j}\in \kappa(k^{-1}g_1)^{-1}B_{R,\epsilon}^+\kappa(k'^{-1}g_2)^{-1}\cap Z^+}\frac{e^{2\delta j}\Delta(2j)}{q^{(d+d')j}}dg_1dg_2d\nu_o(k\xi^-)d\nu_o(k'\xi^+) \\
\le\,&\frac{1}{\vert m_{\Gamma}^{\textrm{BM}}\vert}\int_{k\in K}\int_{k'\in K}\sum_{a_{2j}\in k_1^{-1}B_{R,c\epsilon}^+k_2^{-1}\cap Z^+}\frac{e^{2\delta j}\Delta(2j)}{q^{(d+d')j}}d\nu_o(k\xi^-)d\nu_o(k'\xi^+) \\
=\,&\mathcal{M}_G^{(\Gamma)}(B_{R,c\epsilon}^+)=(1+O(\epsilon^p))\mathcal{M}_G^{(\Gamma)}(B_R).
\end{align*}
By the similar argument, we also have that $\langle F_{B_{R,\epsilon}}^-,\Phi_\epsilon\otimes\Phi_\epsilon\rangle=(1-O(\epsilon^p))\mathcal{M}_G^{(\Gamma)}(B_R).$
Now the inequalities
$\langle F_{B_{R,\epsilon}}^-,\Phi_\epsilon\otimes\Phi_\epsilon\rangle\le F_{B_R}(e,g)=\vert\Gamma g\cap B_R\vert\le\langle F_{B_{R,\epsilon}}^+,\Phi_\epsilon\otimes\Phi_\epsilon\rangle$ gives us the result.
\end{proof}

For a subset $\Omega\subset\partial_\infty\cal{T}$ and $x\in V\mathcal{T}$, we denote by $S_x(\Omega)\subset\cal{T}$ the set of all points lying in geodesics emanating from $x$ toward $\Omega$, and by $B(x,n)\subset V\mathcal{T}$ the ball of radius $n$ centered at $x$. 

\begin{coro}\label{maincoro} Let $x$ be a degree $q^d+1$ vertex of the Bruhat-Tits tree $\cal{T}$ of $G$ and $\Gamma$ be a discrete subgroup of $G$ with $|m^{\textrm{BM}}_\Gamma|<\infty$ and having WSG property. Then we have 
$$\#\{\gamma\in\Gamma\colon \gamma\cdot x\in S_x(\Omega)\cap B(x,2n)\}=\frac{e^{2\delta_\Gamma}(q^d+1)(e^{2\delta_\Gamma n}-1)|\Gamma_x|}{q^d(e^{2\delta_\Gamma}-1)\|m^{\textrm{BM}}_\Gamma\|}\nu_x(\Omega)+O(e^{(2\delta_\Gamma-\kappa)n}),$$
for some $\kappa>0$.
\end{coro}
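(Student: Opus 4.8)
The plan is to reduce the geometric counting statement to the effective orbit-counting Theorem just proved, by choosing the family of compact sets $\{B_R\}$ appropriately and verifying that it is effectively well-rounded. Concretely, since $x$ has degree $q^d+1$, we identify the orbit $G\cdot x$ with $G/K$ where $K=\mathrm{Stab}_G(x)$, so that $\{\gamma\in\Gamma:\gamma x\in S_x(\Omega)\cap B(x,2n)\}$ corresponds, up to the $|\Gamma_x|$-to-one multiplicity coming from the stabilizer of $x$ in $\Gamma$, to the intersection of $\Gamma$ with a set of the form $B_n=\Omega' Z_{2n}^+ K$, where $\Omega'\subset K$ is the Borel subset consisting of those $k\in K$ that move the fixed boundary point $\xi^+$ (or $\xi^-$, according to the chosen normalization) into $\Omega$; the condition $\gamma x\in B(x,2n)$ becomes the radial constraint $a_{2j}\in Z_{2n}^+$, using that $d_{\mathcal{T}}(x,\gamma x)$ is always even. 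The precise bookkeeping of which $K$ versus $K'$ appears, and the factor $|\Gamma_x|$, is routine given the identification $V\mathcal{T}\simeq G/K\cup G/K'$ from Section \ref{five}.

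Next I would invoke Example \ref{well-roundness of bisectors}: taking $\Omega_1=\Omega'$ and $\Omega_2=K$ (the full group, which is trivially admissible since $\nu_o$-boundary effects vanish), and assuming $\Omega'^{-1}$ is admissible (which holds for the natural choices of $\Omega$, e.g. shadows of subtrees or finite unions thereof), the family $\{S_n(\Omega',K)=\Omega' Z_{2n}^+ K:n\in\mathbb{N}\}$ is effectively well-rounded with respect to $\Gamma$. Then the effective counting theorem gives
$$\#\{\gamma\in\Gamma:\gamma x\in S_x(\Omega)\cap B(x,2n)\}=|\Gamma_x|\cdot\mathcal{M}_G^{(\Gamma)}(S_n(\Omega',K))+O\!\left(\mathcal{M}_G^{(\Gamma)}(S_n(\Omega',K))^{1-\eta}\right).$$
It remains to compute the main term. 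Using the explicit formula for $\mathcal{M}_G^{(\Gamma)}$ from Subsection 5.3,
$$\mathcal{M}_G^{(\Gamma)}(S_n(\Omega',K))=\frac{1}{|m_\Gamma^{\mathrm{BM}}|}\nu_o(\Omega')\nu_o(K\xi^+)\left(1+\frac{q^d+1}{q^d}\sum_{j=1}^{n}e^{2\delta_\Gamma j}\right)=\frac{\nu_o(\Omega')}{|m_\Gamma^{\mathrm{BM}}|}\left(1+\frac{q^d+1}{q^d}\cdot\frac{e^{2\delta_\Gamma}(e^{2\delta_\Gamma n}-1)}{e^{2\delta_\Gamma}-1}\right),$$
after normalizing $\nu_o$ so that the total mass $\nu_o(\partial_\infty\mathcal{T})$ is absorbed appropriately and identifying $\nu_o(\Omega')$ with $\nu_x(\Omega)$ via the conformal change of base point. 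The dominant term is $\dfrac{e^{2\delta_\Gamma}(q^d+1)(e^{2\delta_\Gamma n}-1)}{q^d(e^{2\delta_\Gamma}-1)\|m_\Gamma^{\mathrm{BM}}\|}\nu_x(\Omega)$, and since the main term grows like $e^{2\delta_\Gamma n}$, the error $O(\mathcal{M}_G^{(\Gamma)}(S_n)^{1-\eta})=O(e^{2\delta_\Gamma(1-\eta)n})$ can be written as $O(e^{(2\delta_\Gamma-\kappa)n})$ with $\kappa=2\delta_\Gamma\eta>0$.

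The main obstacle I anticipate is not the counting estimate itself, which is handed to us, but the careful matching of normalizations: one must check that the "radius $2n$" ball in $\mathcal{T}$ corresponds exactly to $Z_{2n}^+$ under Cartan decomposition (so that $\Delta(2j)=(q^d+1)q^{(d+d')j-d}$ counts precisely the sphere of radius $2j$ about $x$), that the subset $\Omega'\subset K$ representing the boundary cone $\Omega$ is admissible so that Example \ref{well-roundness of bisectors} applies, and that the Patterson-Sullivan mass $\nu_x(\Omega)$ in the statement equals $\nu_o(\Omega')$ under the identification used to define $\mathcal{M}_G^{(\Gamma)}$. A secondary subtlety is the appearance of the stabilizer factor $|\Gamma_x|$: the orbit map $\Gamma\to\Gamma\cdot x$ is $|\Gamma_x|$-to-one, and the counting theorem on $G$ counts $\Gamma$-elements with multiplicity determined by $\Gamma\cap K$, so one must track this factor through equation \eqref{eqn} and the definition of $F_R$. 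Once these identifications are pinned down the result follows by direct substitution.
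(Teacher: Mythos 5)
Your overall route is exactly the one the paper intends (it states Corollary \ref{maincoro} without a separate proof): identify $S_x(\Omega)\cap B(x,2n)$ with a bisector-type set $\Omega' Z_{2n}^+K$ using $V\mathcal{T}\simeq G/K\cup G/K'$ and the evenness of $d(x,\gamma x)$, check effective well-roundedness via the admissible-bisector example, apply the effective counting theorem, and evaluate $\mathcal{M}_G^{(\Gamma)}$ by the same computation as for $KZ_{2R}^+K$, with $\kappa=2\delta_\Gamma\eta$. Up to the normalization caveats you yourself flag, that reduction and the evaluation of the radial sum are fine.

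The genuine gap is in how you produce the constant, specifically the factor $|\Gamma_x|$. You introduce it by asserting that the orbit map $\Gamma\to\Gamma\cdot x$ is $|\Gamma_x|$-to-one, but the left-hand side of the corollary counts group elements $\gamma$, not orbit vertices, and since $B_n=\Omega' Z_{2n}^+K$ is right-$K$-invariant one has exactly $\#\{\gamma\in\Gamma:\gamma x\in S_x(\Omega)\cap B(x,2n)\}=|\Gamma\cap B_n|$, with no multiplicity correction at all. So your own chain of steps yields the main term $\mathcal{M}_G^{(\Gamma)}(B_n)=\frac{1}{|m_\Gamma^{\mathrm{BM}}|}\nu_o(\Omega')\,\nu_o(K\xi^+)\bigl(1+\frac{q^d+1}{q^d}\sum_{j=1}^n e^{2\delta_\Gamma j}\bigr)$, not $|\Gamma_x|$ times it, and the extra factor $\nu_o(K\xi^+)=\|\nu_o\|$ is simply "absorbed by normalization'' rather than accounted for. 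If the stabilizer factor is to appear legitimately, it has to come out of the measure-theoretic bookkeeping in the proof of the counting theorem — e.g.\ from pairing the counting function with the normalized indicator of the image of $K$ in $\Gamma\backslash G$, whose Haar mass is $1/|\Gamma_x|$ because $\Gamma\cap K=\Gamma_x$, together with the identification of $\nu_o(\Omega')$ with $\nu_x(\Omega)$ — not from an orbit-map multiplicity. As written, the precise constant in the corollary (the roles of $|\Gamma_x|$ and of $\|\nu_x\|$) is asserted rather than derived, which is precisely the delicate part of this corollary; the rest of the argument is the intended and correct reduction.
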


Using arguments of \cite{PPS} and \cite{BPP}, one can obtain error rate on the number of edge paths of length at most $n$ in $\Gamma\backslash\backslash\mathcal{T}$ with weights $e^{\int_x^{\gamma y}\widetilde{F}}$ using the equidistribution of the skinning measure $d\sigma_{\widetilde{\mathcal{H}}}$ (see Remark \ref{skinning}). The following corollary gives the precise statement.

\begin{coro}[\cite{BPP}] Let $x$ be a degree $q^d+1$ vertex of the Bruhat-Tits tree $\mathcal{T}$ of $G$ and suppose that $(\Gamma,\widetilde{F})$ is a pair of discrete subgroup $\Gamma<G$ and a potential $\widetilde{F}$ for $\Gamma$ with WSG property. Let $\mathcal{N}_x(n)$ be the weighted counting function of the closed path in $\Gamma\backslash\backslash\mathcal{T}$ of length at most $n$ with base point $\Gamma x$. In other words, let
$$\mathcal{N}_x(n)=\sum_{\gamma\colon d_\mathcal{T}(\gamma x,x)\le n}e^{\int_x^{\gamma x}\widetilde{F}}.$$
Then as $n\to\infty$, we have
$$\mathcal{N}_x(2n)=\frac{e^{2\delta_{\Gamma,F}}\|\nu^-_x\|\|\nu^+_x\||\Gamma_x|}{(e^{2\delta_{\Gamma,F}}-1)\|m_{\Gamma,F}^{\nu^-,\nu^+}\|}e^{2n\delta_{\Gamma,F}}+O(e^{(2\delta_{\Gamma,F}-\kappa)n})$$
for some $\kappa>0$. 
\end{coro}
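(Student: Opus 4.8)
The plan is to rewrite $\mathcal{N}_x$ as a weighted orbital counting function, slice it into spherical shells, estimate each shell by an effective equidistribution statement, and then sum the resulting geometric series; the factor $\tfrac{e^{2\delta_{\Gamma,F}}}{e^{2\delta_{\Gamma,F}}-1}$ will appear precisely as $\sum_{m\ge 0}e^{-2m\delta_{\Gamma,F}}$ read in reverse. First observe that since $G$ acts on $\mathcal{T}$ without inversions, $d_\mathcal{T}(x,\gamma x)$ is always even, and that $\gamma\mapsto\gamma x$ is exactly $|\Gamma_x|$-to-one onto the orbit $\Gamma x$, while $\int_x^{\gamma x}\widetilde F$ depends only on $\gamma x$. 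Hence, writing $N_m=\sum_{\gamma\in\Gamma,\,d_\mathcal{T}(\gamma x,x)=2m}e^{\int_x^{\gamma x}\widetilde F}$, we have $\mathcal{N}_x(2n)=\sum_{m=0}^{n}N_m$, and it suffices to prove
$$N_m=\frac{\|\nu^-_x\|\,\|\nu^+_x\|\,|\Gamma_x|}{\|m_{\Gamma,F}^{\nu^-,\nu^+}\|}\,e^{2m\delta_{\Gamma,F}}+O\bigl(e^{(2\delta_{\Gamma,F}-\kappa)m}\bigr)$$
for some $\kappa>0$: summing over $m\le n$ and using $\sum_{m=0}^{n}e^{2m\delta_{\Gamma,F}}=\frac{e^{2(n+1)\delta_{\Gamma,F}}-1}{e^{2\delta_{\Gamma,F}}-1}$ then produces the stated main term, while the errors sum to $O(e^{(2\delta_{\Gamma,F}-\kappa)n})$ provided $\kappa<2\delta_{\Gamma,F}$.

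For the shell estimate I would follow the common-perpendicular counting scheme of \cite{PPS} and \cite{BPP}, regarding the single vertex $x$ as a convex subset of $\mathcal{T}$ whose outer (resp. inner) unit normal bundle is the set of geodesic rays issued from $x$, carrying the skinning measure $\nu^+_x$ (resp. $\nu^-_x$) via $\rho\mapsto\rho(\infty)$. Lifting to $S\mathcal{T}$, a $\gamma$ with $d_\mathcal{T}(\gamma x,x)=2m$ is recorded by the geodesic segment from $x$ to $\gamma x$; the $\Gamma$-equivariance and conformal transformation rule of the normalized Patterson densities $\nu^\pm$, together with the normalization of the Gibbs cocycle (Subsection \ref{density}), let one unfold $N_m$ into the pairing of a test function concentrated near the inner normal bundle of $x$ against the pushforward $\phi^{\circ 2m}_*\sigma_{x}$ of the skinning measure of $x$. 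The effective equidistribution of $\phi^{\circ 2m}_*\sigma_{x}$ towards $\frac{1}{\|m_{\Gamma,F}^{\nu^-,\nu^+}\|}m_{\Gamma,F}^{\nu^-,\nu^+}$, with an exponential rate, is Remark \ref{skinning} made quantitative: it is the skinning-measure analogue of the equidistribution theorems of Section \ref{five}, and it rests on the effective mixing of $\phi^{\circ 2}$ furnished by Theorem \ref{main} (this is where the WSG hypothesis enters, exactly as in Section \ref{four}). Tracking the normalizing constants through this unfolding — the total masses $\|\nu^\pm_x\|$ of the skinning measures and the factor $|\Gamma_x|$ from the $|\Gamma_x|$-to-one identification above — reproduces the displayed constant for $N_m$, just as in the non-effective computation of \cite{PPS} and \cite{BPP}.

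Equivalently, one may run the argument of Corollary \ref{maincoro} verbatim with $\Omega=\partial_\infty\mathcal{T}$, substituting the normalized Patterson densities $\nu^\pm$ for $(\Gamma,\widetilde F^\pm)$ in place of the Patterson–Sullivan density, the Gibbs measure $m_{\Gamma,F}^{\nu^-,\nu^+}$ in place of the Bowen–Margulis measure, and the equidistribution of the skinning measure $d\sigma_{\widetilde{\mathcal H}}$ from Remark \ref{skinning} in place of that of $\mu^{\mathrm{PS}}_{\mathcal H}$; the whole of Section \ref{five} is formally insensitive to this substitution, since $\widetilde F$ is used there only through the normalized cocycles $C^\pm$ and the associated densities.

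The main obstacle is making the skinning-measure equidistribution \emph{effective}. The qualitative statement (convergence without a rate) is due to Roblin and to \cite{BPP}, but an exponential error term requires exponential mixing of the geodesic $2$-translation map, i.e.\ Theorem \ref{main}; without WSG one has only Roblin's non-effective mixing and the error term degenerates. A secondary difficulty is that when $\Gamma$ is not convex cocompact, $\sigma_x$ and $m_{\Gamma,F}^{\nu^-,\nu^+}$ are not compactly supported, so one must control the mass that escapes into the cuspidal rays and funnels uniformly in $m$; here WSG — through the bound $p^{(n),B}_{ij}\le t_it_j^{-1}\rho^n$ of Lemma \ref{lemma} — is exactly what forces those tails to decay fast enough to be absorbed into $O(e^{(2\delta_{\Gamma,F}-\kappa)m})$. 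The remaining bookkeeping of the multiplicative constant (the interplay of $|\Gamma_x|$, the stabilizers $\Gamma_{\xi_0,\dots,\xi_{n-1}}$ built into $\lambda_F$, and the masses $\|\nu^\pm_x\|$) is routine once the unfolding is set up.
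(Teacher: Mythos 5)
Your proposal is correct and follows essentially the same route as the paper, which itself only indicates that the corollary is obtained by running the common-perpendicular counting arguments of \cite{PPS} and \cite{BPP} with the skinning-measure equidistribution of Remark \ref{skinning} made effective by the exponential mixing of Theorem \ref{main} under the WSG hypothesis. Your shell decomposition, geometric-series summation, and bookkeeping of $\|\nu_x^\pm\|$, $|\Gamma_x|$ and $\|m_{\Gamma,F}^{\nu^-,\nu^+}\|$ are exactly the intended unfolding, and your identification of the effective equidistribution (and the control of mass in the cusps via Lemma \ref{lemma}) as the only non-routine ingredients matches the paper's presentation.
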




\end{document}